\newtheorem{thm}{Theorem}[section]
 \newtheorem{cor}{Corollary}[section]
 \newtheorem{lem}{Lemma}[section]
 \newtheorem{prop}{Proposition}[section]
 \newtheorem{defn}{Definition}[section]
\newtheorem{rem}{Remark}[section]
\def\Id{{\rm Id}\,}
\def\d{\partial}
\def\ddj{\dot \Delta_j}
\def\ddk{\dot \Delta_k}
\def\tilde{\widetilde}
\def\wt{\widetilde}
\newcommand\R{\mathbb{R}}
\newcommand\Z{\mathbb{Z}}
\newcommand{\N}{\mathbb{N}}
\newcommand{\ep}{\varepsilon}
\newcommand{\curl}{\mbox{\rm curl}\;\!}
\renewcommand{\div}{\mbox{\rm div}\;\!}
\def\cA{{\mathcal A}}
\def\cC{{\mathcal C}}
\def\cP{{\mathcal P}}
\def\cX{{\mathcal X}}
\begin{document}
\title[Compressible Navier-Stokes equations]{Optimal decay for the compressible Navier-Stokes equations without additional smallness assumptions}
\author{Zhouping Xin}
\address{The Institute of Mathematical Sciences and Department of Mathematics, The Chinese University
of Hong Kong, Shatin, N.T., Hong Kong,}
\email{zpxin@ims.cuhk.edu.hk}

\author{Jiang Xu}
\address{Department of Mathematics,  Nanjing
University of Aeronautics and Astronautics,
Nanjing 211106, P.R.China,}
\email{jiangxu\underline{ }79math@yahoo.com}
\thanks{The second author would like to thank Professor R. Danchin for addressing the conjecture on the regularity of low frequencies when visiting the LAMA in UPEC}

\subjclass{76N15, 35Q30, 35L65, 35K65}
\keywords{Time decay estimates; Navier-Stokes equations; $L^p$ critical spaces.}

\begin{abstract}
This work is concerned with the large time behavior of solutions to the barotropic compressible Navier-Stokes equations in $\mathbb{R}^{d}(d\geq2)$. Precisely, it is shown that if the initial density and velocity additionally belong to some Besov space $\dot{B}^{-\sigma_1}_{2,\infty}$ with $\sigma_1\in (1-d/2, 2d/p-d/2]$, then the $L^p$ norm (the slightly stronger $\dot{B}^{0}_{p,1}$ norm in fact) of global solutions admits the optimal decay $t^{-\frac{d}{2}(\frac 12-\frac 1p)-\frac{\sigma_1}{2}}$ for $t\rightarrow+\infty$. In contrast to refined time-weighted approaches (\cite{DX,XJ}), a pure energy argument
(\textit{independent of the spectral analysis}) has been developed in more general $L^p$ critical framework, which allows to \textit{remove the smallness of low frequencies of initial data}. Indeed, bounding the evolution of $\dot{B}^{-\sigma_1}_{2,\infty}$-norm restricted in low frequencies is the key ingredient, whose proof mainly depends on non standard $L^p$ product estimates with respect to different Sobolev embeddings. The result can hold true in case of \textit{large highly oscillating initial velocities}.
\end{abstract}
\maketitle

\section{Introduction}
Consider the following compressible Navier-Stokes equations
\begin{equation}\label{R-E1}
\left\{\begin{array}{l}
\partial_t\varrho+\div(\varrho u)=0,\\[1ex]
\partial_t(\varrho u)+\div(\varrho u\otimes u)-\div\bigl(2\mu D(u)+\lambda\,\div u\, \Id\bigr)+\nabla\Pi=0,
\end{array}\right.
\end{equation}
which govern the motion of a general barotropic compressible fluid in whole space  $\R^d$.  Here $u=u(t,x)\in \mathbb{R}^{d}$ (with  $(t,x)\in \mathbb{R}_{+}\times \mathbb{R}^{d}$) and $\varrho=\varrho(t,x)\in \mathbb{R}_{+}$ stand for the velocity field and density of the fluid, respectively.
The barotropic assumption means that the pressure $\Pi\triangleq P(\varrho)$ depends only upon the density of fluid and the function $P$ will be taken suitably smooth in what follows. The notation $D(u)\triangleq\frac12(D_x u+{}^T\!D_x u)$ stands for the {\it deformation tensor}, and
 $\div$ is the divergence operator with respect to the space variable.
The  density-dependent functions $\lambda$ and $\mu$  (the  \emph{bulk} and \emph{shear viscosities})
 are assumed to be smooth enough and to satisfy
 \begin{equation}\label{R-E2}
 \mu > 0\quad\hbox{and}\quad \nu\triangleq\lambda+2\mu>0.
 \end{equation}

System \eqref{R-E1}
is supplemented with the initial data
\begin{equation}\label{R-E3}
(\varrho,u)|_{t=0}=(\varrho_0,u_0).
\end{equation}
We investigate the solution $(\varrho,u)$ to the Cauchy problem \eqref{R-E1}-\eqref{R-E3} fulfilling the constant far-field behavior
$$(\varrho,u)\rightarrow (\varrho_\infty,0)$$
with $\varrho_\infty>0,$ as $|x|\rightarrow\infty$.

There is a huge literature on the existence, blow-up and large-time behavior of solutions to the compressible Navier-Stokes equations.
The local existence and uniqueness of smooth solutions away from vacuum were proved by Serrin \cite{S} and Nash \cite{N}. The local existence of strong solutions with Sobolev regularities was constructed by Solonnikov \cite{So}, Valli \cite{V} and Fiszdon-Zajaczkowski \cite{FZ}. Matsumura and Nishida \cite{MN1,MN2} established the global strong solutions for small perturbations of a linearly stable constant non-vacuum state $\left(\varrho _{\infty},0\right)$ in three dimensions.  With the additional $L^1$ \textit{smallness assumption} of initial data, the definite
decay rate was also available:
\begin{equation} \label{R-E4}\|(\varrho-\varrho_{\infty},u)(t)\|_{L^2}\lesssim \langle t\rangle^{-\frac{3}{4}}\quad\hbox{with }\
\langle t\rangle\triangleq \sqrt{1+t^2},\end{equation}
which coincides with that of the heat kernel. Indeed, the decay rate in time reveals the dissipative properties of solutions to \eqref{R-E1}-\eqref{R-E3}.
Subsequently, Ponce \cite{P} obtained more general $L^r$ decay rates:
\begin{equation}\label{R-E5}
\|\nabla^{k}(\varrho-\varrho_{\infty},u)(t)\|_{L^r}\lesssim \langle t\rangle^{-\frac{d}{2}(1-\frac{1}{r})-\frac k2}, \ \ \ 2\leq r\leq \infty, \ \ 0\leq k\leq 2,\ \ d=2,3.
\end{equation}
Later, Matsumura-Nishida's results were generalized to more physical situations where the fluid domain is not $\R^d$: for example, the exterior domains were investigated by Kobayashi \cite{K} and Kobayashi-Shibata \cite{KS}, the half space by Kagei \& Kobayashi \cite{KK1,KK2}. In addition, there are some results available which are connected to the information of wave propagation. Zeng \cite{Z} investigated the $L^1$ convergence to the nonlinear Burgers' diffusive wave. Hoff and Zumbrun \cite{HZ} performed the detailed analysis of the Green function for the multi-dimensional case and got the
$L^\infty$ decay rates of diffusion waves.  In \cite{LW}, Liu and Wang exhibited pointwise convergence of solution to diffusion waves with the optimal time-decay rate in odd dimension, where the phenomena of the weaker Huygens' principle was also shown.
Huang, Li and the first author \cite{HLX} established the global existence of classical solutions that may have large highly oscillations and
can contain vacuum states, see also \cite{LX} where further results including 2-dimensional case and large time behavior of solutions have been obtained. For the existence of solution with arbitrary data, a breakthrough is due to Lions \cite{L}, who obtained the
global existence of weak solutions with the finite energy when the adiabatic exponent is suitably large. Subsequently, some improvements were made by Feireisl, Novotny \& Petzeltov\'{a} \cite{FNP} and Jiang \& Zhang \cite{JZ}. In 1998, the first author \cite{X} found a fact that
any smooth solution to the Cauchy problem of full compressible Navier-Stokes system without heat conduction (including the current barotropic case) would blow up in finite time if the initial density contains vacuum. See also \cite{XY} for general blow-up results.

As regards global-in-time results, \textit{scaling invariance} plays a fundamental role. It is well known that suitable critical quantities (that is, having the same scaling invariance as the system under consideration) may control the possible blow-up of solutions. This trick is now classic. Let us recall
the existence context for the incompressible Navier-Stokes equations and go back to the work \cite{FK} by Fujita \& Kato (see also results by Kozono \& Yamazaki \cite{KY} and Cannone  \cite{C}). Observe that \eqref{R-E1} is invariant by the transform
\begin{equation}\label{R-E6}
\varrho(t,x)\leadsto \varrho(l ^2t,l x),\quad
u(t,x)\leadsto \ell u(l^2t,l x), \ \ l>0
\end{equation}
up to a change of the pressure term $\Pi$ into $l^2\Pi$. Danchin \cite{DR1} solved \eqref{R-E1} globally in critical homogeneous Besov spaces of $L^2$ type, that is, the initial data belong to $(\dot{B}^{\frac{d}{2}}_{2,1}\cap\dot{B}^{\frac{d}{2}-1}_{2,1})\times(\dot{B}^{\frac{d}{2}-1}_{2,1})^{d}$. Subsequently, the result of \cite{DR1} has been extended to the Besov spaces that are not related to $L^2$ by Charve \& Danchin \cite{CD} and Chen, Miao  \&  Zhang \cite{CMZ} independently. Haspot \cite{H} achieved essentially the same result by means of more elementary approach based on using the viscous effective flux as in \cite{HD,VK} (see also \cite{DH} for the case of density-dependent viscosity coefficients). We would like to give the statement as follows for convenience of reader.

\begin{thm}\label{thm1.1} (\cite{CD,CMZ,H})  Let $d\geq2$ and  $p$ satisfy
\begin{equation}\label{R-E7}
2\leq p \leq \min(4,d^*)\ \hbox{with}\  d^*\triangleq2d/(d-2)\ \hbox{and, additionally, }\  p\not=4\ \hbox{ if }\ d=2.\end{equation}
Assume  that $P'(\varrho_\infty)>0$ and that \eqref{R-E2} is fulfilled.  There exists a  constant $c=c(p,d,\lambda,\mu,P,\varrho_\infty)$ such that
if    $a_0\triangleq \varrho_0-\varrho_\infty$ is in $\dot B^{\frac d{p}}_{p,1},$
 if $u_0$ is in $\dot B^{\frac d{p}-1}_{p,1}$  and if in addition   $(a_0^\ell,u_0^\ell)\in\dot B^{\frac d2-1}_{2,1}$
   with
    \begin{equation}\label{R-E8}
\cX_{p,0}\triangleq \|(a_0,u_0)\|^\ell_{\dot B^{\frac d2-1}_{2,1}}+\|a_0\|^h_{\dot B^{\frac dp}_{p,1}}
+\|u_0\|^h_{\dot B^{\frac d{p}-1}_{p,1}}\leq c\end{equation}
then \eqref{R-E1}  has a unique global-in-time  solution $(\varrho,u)$  with  $\varrho=\varrho_\infty+a$ and
$(a,u)$ in the space $X_p$ defined by\footnote{The subspace $\,\wt\cC_b(\R_+;\dot B^{s}_{q,1})\,$ of $\,\cC_b(\R_+;\dot B^{s}_{q,1})\,$
is defined in \eqref{R-E177}, and the norms $\|\cdot\|_{\wt L^\infty(\dot B^s_{p,1})}$
are introduced just below Definition \ref{Defn3.2}.}:
$$\displaylines{
(a,u)^\ell\in \wt\cC_b(\R_+;\dot B^{\frac d2-1}_{2,1})\cap  L^1(\R_+;\dot B^{\frac d2+1}_{2,1}),\quad
a^h\in \wt\cC_b(\R_+;\dot B^{\frac dp}_{p,1})\cap L^1(\R_+;\dot B^{\frac dp}_{p,1}),
\cr u^h\in  \wt\cC_b(\R_+;\dot B^{\frac dp-1}_{p,1})
\cap L^1(\R_+;\dot B^{\frac dp+1}_{p,1}).}
$$
Furthermore, there exists some constant $C=C(p,d,\lambda,\mu,P,\varrho_\infty)$ such that
\begin{equation}\label{R-E9}
\cX_p\leq C \cX_{p,0},
\end{equation} with
\begin{multline}\label{R-E10}
 \cX_{p}\triangleq\|(a,u)\|^{\ell}_{\wt L^\infty(\dot B^{\frac d2-1}_{2,1})}+\|(a,u)\|^{\ell}_{L^1(\dot B^{\frac d2+1}_{2,1})}
\\+\|a\|^{h}_{\wt L^\infty(\dot B^{\frac dp}_{p,1})\cap L^1(\dot B^{\frac dp}_{p,1})}
+\|u\|^{h}_{\wt L^\infty(\dot B^{\frac dp-1}_{p,1})\cap L^1(\dot B^{\frac dp+1}_{p,1})}.
\end{multline}
\end{thm}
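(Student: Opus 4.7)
The plan is to construct the solution via a Friedrichs (or iterative) approximation scheme and to establish uniform bounds for the quantity $\cX_p$ defined in \eqref{R-E10}. First I would reformulate \eqref{R-E1} around the constant state by setting $a \triangleq \varrho - \varrho_\infty$, which, after normalizing, yields
\begin{equation*}
\partial_t a + \div u = f(a,u), \qquad \partial_t u - \cA u + \nabla(P'(\varrho_\infty) a) = g(a,u),
\end{equation*}
where $\cA u \triangleq \mu\Delta u + (\lambda+\mu)\nabla\div u$, and $f, g$ collect the semilinear and quasilinear remainders (of the form $\div(au)$, $u\cdot\nabla u$, $k(a)\nabla a$, $I(a)\cA u$, with $k, I$ smooth and vanishing at $0$).

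The heart of the argument is a priori estimates on this reformulated system in $X_p$, performed separately on low and high frequencies. In the low-frequency regime one works with the $L^2$-based norm: the principal linear part diagonalizes in Fourier into a coupled wave--heat block whose spectrum behaves like $-|\xi|^2$ for $|\xi| \ll 1$, so $L^2$ energy estimates on Littlewood--Paley blocks (with standard commutator techniques for the convection term) deliver both the $\wt L^\infty_t(\dot B^{d/2-1}_{2,1})$ bound and the $L^1_t(\dot B^{d/2+1}_{2,1})$ smoothing for $(a,u)^\ell$. The passage from $L^p$ to $L^2$ for the mixed low/high products on the right-hand side is permitted under the condition $p \leq d^*$ in \eqref{R-E7}, via the embedding $\dot B^{s}_{p,1}\hookrightarrow \dot B^{s-d(1/p-1/2)}_{2,1}$.

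The main obstacle is the high-frequency analysis, because the continuity equation is of hyperbolic transport type and has no intrinsic smoothing, so one cannot simply run a heat semigroup estimate on $a$ in $\dot B^{d/p}_{p,1}$. Following \cite{H, HD, VK}, I would circumvent this by introducing the \emph{effective viscous flux}
\begin{equation*}
\cG \triangleq (2\mu+\lambda)\div u - P'(\varrho_\infty) a,
\end{equation*}
which, up to lower-order terms, satisfies for each high-frequency block a damped scalar equation of the form $\partial_t \cG_j + \tfrac{P'(\varrho_\infty)}{2\mu+\lambda}\cG_j = \text{l.o.t.}$ This yields exponential-in-time damping of $\cG^h$, and then through $P'(\varrho_\infty) a = (2\mu+\lambda)\div u - \cG$ produces the missing $L^1_t(\dot B^{d/p}_{p,1})$ bound on $a^h$. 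The high-frequency bound on $u^h$ then follows from $L^p$ maximal regularity for the Lamé operator $\partial_t - \cA$. This is precisely where the restriction $p \leq \min(4, d^*)$ enters: it reconciles $L^p$ maximal regularity with the algebra structure of $\dot B^{d/p}_{p,1}$ needed for the smooth pressure and viscosity compositions.

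The final ingredient is to bound the nonlinear right-hand sides by $\cX_p$. Since $\dot B^{d/p}_{p,1}$ is an algebra under \eqref{R-E7}, the bilinear terms $\div(au)$, $u\cdot\nabla u$, $I(a)\cA u$, $k(a)\nabla a$ are handled by paraproduct/remainder decompositions estimated in the appropriate component of $X_p$; smooth-function composition estimates of the form $\|F(a)\|_{\dot B^{d/p}_{p,1}} \lesssim \|a\|_{\dot B^{d/p}_{p,1}}$ (for $F$ smooth with $F(0)=0$ and $\|a\|_{L^\infty}$ small, ensured by the Bernstein inequality and the smallness of $\cX_{p,0}$) take care of $k(a)$ and $I(a)$. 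Collecting all bounds produces a closed inequality of the shape $\cX_p \leq C_0\cX_{p,0} + C(\cX_p)^2$ on any smooth approximation; a standard continuation argument, together with the corresponding uniqueness statement in this functional setting, then promotes the local solution to the global one and yields \eqref{R-E9}.
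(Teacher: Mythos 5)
This theorem is quoted from \cite{CD,CMZ,H}; the paper gives no proof of it, so your proposal can only be measured against the strategy of those references and against the a priori estimates the paper reproduces in Section 4. On that score your outline is the standard one: reformulation around $(\varrho_\infty,0)$, a hybrid $L^2$ energy estimate on dyadic blocks at low frequencies giving the $\wt L^\infty(\dot B^{\frac d2-1}_{2,1})\cap L^1(\dot B^{\frac d2+1}_{2,1})$ control, a Hoff--Haspot effective-flux device at high frequencies to compensate for the absence of smoothing in the transport equation, paraproduct and composition estimates for the nonlinearities, and a bootstrap on $\cX_p\leq C_0\cX_{p,0}+C\cX_p^2$. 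This is exactly the machinery the paper itself redeploys in Lemmas \ref{lem4.1}--\ref{lem4.2}, except that there the high-frequency analysis is run on Haspot's effective velocity $w=\nabla(-\Delta)^{-1}(a-\div u)$ rather than on the scalar flux $\cG\triangleq\nu\div u-P'(\varrho_\infty)a$.

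One mechanism in your sketch is stated backwards and, as written, would not close. The flux $\cG$ does \emph{not} satisfy a zeroth-order damped equation $\partial_t\cG_j+\frac{P'(\varrho_\infty)}{2\mu+\lambda}\cG_j=\hbox{l.o.t.}$; in normalized variables one computes $\partial_t\cG-\Delta\cG=\cG+a+\div g-f$, so $\cG$ obeys a \emph{heat} equation whose high-frequency decay comes from the Laplacian (the zeroth-order term even has the anti-damping sign). The quantity that satisfies the damped equation is the density itself, $\partial_t a+\div(au)+a=-\div w$ as in \eqref{R-E36}, with the flux entering only as a source that is handled through the parabolic smoothing of $w$. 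It is this damping of $a$ that produces the $L^1(\dot B^{\frac dp}_{p,1})$ bound on $a^h$ at the \emph{same} regularity (a gain of time-integrability, not of derivatives); recovering $a^h$ from $\nu\div u-\cG$ as you propose would require the $L^1(\dot B^{\frac dp+1}_{p,1})$ bound on $u^h$ that is being established simultaneously, so the density and velocity estimates must be coupled as in \eqref{R-E39}--\eqref{R-E41} rather than run sequentially. A minor further point: the restriction $p\leq 4$ does not come from maximal regularity but from the product estimates coupling the $L^p$-based high frequencies with the $L^2$-based low frequencies (one needs $\frac2p\geq\frac12$, cf.\ \eqref{R-E955b}--\eqref{R-E955c}). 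With these corrections your sketch matches the proofs in the cited references.
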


Next, a natural problem is what is the large time asymptotic behavior of global-in-time solutions constructed above. Although providing an accurate long-time asymptotics description is still out of reach in general, there are a number of works concerning time-decay rates of $L^q$-$L^r$ (same as \eqref{R-E4}-\eqref{R-E5}) in the
critical regularity framework. Okita \cite{O} established the optimal time-decay estimates to \eqref{R-E1}-\eqref{R-E3} by using a smart modification of the method of \cite{DR1}. However, that result cannot cover the 2D case. In the survey \cite{DR2}, Danchin proposed another description of the
time decay which allows one to handle any dimensions $d\geq2$. Danchin and the second author \cite{DX} developed the method of \cite{DR2} so as to get the optimal decay rates in more general $L^p$ critical spaces. The regularity exponent $d/p-1$ for velocity may become negative in physical dimensions $d=2,3$, the result thus applies to \textit{large highly oscillating initial velocities}. Inspired by the private communication with Danchin, the second author \cite{XJ}
claimed a general low-frequency assumption for optimal decay estimates, that is, the initial density and velocity additionally belong to some Besov space $\dot{B}^{-\sigma_1}_{2,\infty}$ where the regularity exponent $\sigma_1$ belongs to a whole range $(1-d/2, \sigma_0]$ with $\sigma_{0}\triangleq \frac{2d}{p}-\frac d2$. These recent efforts (also including classical works \cite{MN2,P}) all depend on the time-weighted energy approach in \textit{the Fourier semi-group framework}, so the smallness of low frequencies of initial data is usually needed. In the present paper, we aim at developing a \textit{different} $L^p$ energy argument under the same regularity assumption as in \cite{XJ}, and then establish the optimal decay of solutions to \eqref{R-E1}-\eqref{R-E3}. In comparison with previous studies (whether in Sobolev spaces with higher regularity or critical Besov spaces), \textit{the smallness condition} could be removed.

\section{Reformulation and main results} \setcounter{equation}{0}
To state main results, it is convenient to rewrite System \eqref{R-E1} as the
linearized compressible Navier-Stokes equations about equilibrium $(\varrho_{\infty},0)$, and regard the nonlinearities as source terms.
Precisely, one has
\begin{equation}\label{R-E11}
\left\{\begin{array}{l}\d_ta+\div u=f,\\[1ex]
\d_tu-\cA u+\nabla a=g,
\end{array}\right.
\end{equation}
with $f\triangleq-\div(au),\,$
$\cA\triangleq\mu_\infty\Delta+(\lambda_\infty\!+\!\mu_\infty)\nabla\div$ such that $\mu_\infty>0$ and $\lambda_\infty+2\mu_\infty>0,$
$$  g\triangleq-u\cdot\nabla u-I(a)\cA u-k(a)\nabla a+\frac1{1+a}\div\bigl(2\wt\mu(a) D(u)+\wt\lambda(a)\div u\:\Id\bigr),
 $$
where\footnote{In our analysis, the exact value of functions $k,$ $\tilde{\lambda},$ $\tilde{\mu}$ and even $I$ will not  matter :
 we  only need those functions to be smooth enough and to vanish at $0$.}
  $$
  I(a)\triangleq\frac{a}{1+a},\quad\!\! k(a)\triangleq\frac{P'(1+a)}{1+a}-1,\quad\!\!
\wt\mu(a)\triangleq\mu(1+a)-\mu(1)\ \hbox{  and  }\  \wt\lambda(a)\triangleq\lambda(1+a)-\lambda(1).$$
For simplicity, one can
perform a suitable  rescaling  in \eqref{R-E1} so as to normalize, at infinity, the density  $\varrho_\infty,$ the sound speed $c_\infty\triangleq\sqrt{P'(\varrho_\infty)}$
and the total viscosity $\nu_\infty\triangleq \lambda_\infty+2\mu_\infty$ (with
$\lambda_{\infty}\triangleq\lambda(\varrho_\infty)$ and $\mu_{\infty}\triangleq\mu(\varrho_\infty)$)
to be one.

Denote $\Lambda^{s}f\triangleq \mathcal{F}^{-1}(|\xi|^{s}\mathcal{F}f)$ for $s\in \mathbb{R}$. Now, main results are stated as follows.
\begin{thm}\label{thm1.2}
Let those assumptions of Theorem \ref{thm1.1} hold and ~$(\varrho,u)$ be the corresponding global solution to \eqref{R-E1}. If in addition $(a_0,u_0)^{\ell}\in \dot{B}^{-\sigma_1}_{2,\infty}$ ($1-\frac{d}{2}<\sigma_1\leq\sigma_{0}\triangleq \frac{2d}{p}-\frac d2$) such that
$\|(a_0,u_0)\|^{\ell}_{\dot{B}^{-\sigma_{1}}_{2,\infty}}$ is bounded, then it holds that
\begin{eqnarray}\label{R-E12}
\|\Lambda^{\sigma}(a,u)\|_{L^p}\lesssim (1+t)^{-\frac{d}{2}(\frac 12-\frac 1p)-\frac{\sigma+\sigma_1}{2}} \ \mbox{if}\quad -\tilde{\sigma}_{1}<\sigma\leq \frac dp-1,
\end{eqnarray}
for all $t\geq0$, where $\tilde{\sigma}_{1}\triangleq \sigma_{1}+d(\frac 12-\frac 1p)$.
\end{thm}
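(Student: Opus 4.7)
The plan is to bypass any spectral analysis of the semigroup generated by $\cA$ and to reduce the decay of the nonlinear solution to two purely energy-based ingredients: the uniform-in-time propagation of a low-frequency $L^2$-based norm of negative regularity (playing the role of Matsumura--Nishida's $L^1$ assumption), and a Schonbek-type interpolation that upgrades this bound into algebraic time decay.  Together with the global control $\cX_p\le C\cX_{p,0}$ from Theorem~\ref{thm1.1}, propagating $\|(a,u)\|^\ell_{\dot B^{-\sigma_1}_{2,\infty}}$ will suffice to control every Besov norm that appears in the statement, and, crucially, this propagation requires no new smallness hypothesis on the initial datum.

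\emph{Step 1: low-frequency propagation.}  I would apply $\dj$ for $j\le j_0$ to the linearized system~\eqref{R-E11}, test against $(\dj a,\dj u)$ in $L^2$, and exploit that the acoustic principal part $(\nabla a,\div u)$ is skew-symmetric at the $L^2$ level.  Summing in $\ell^\infty$ over $j\le j_0$ yields
\begin{equation*}
\|(a,u)(t)\|^\ell_{\dot B^{-\sigma_1}_{2,\infty}}\;\le\;\|(a_0,u_0)\|^\ell_{\dot B^{-\sigma_1}_{2,\infty}}+\int_0^t\|(f,g)(\tau)\|^\ell_{\dot B^{-\sigma_1}_{2,\infty}}\,d\tau,
\end{equation*}
and the heart of the argument lies in estimating the sources.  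Each summand of $f,g$ is a product in which one factor has only an $L^p$-based Besov bound (for instance $a^h\in L^\infty_t(\dot B^{d/p}_{p,1})$ or $u^h\in L^\infty_t(\dot B^{d/p-1}_{p,1})$) while the target $\dot B^{-\sigma_1}_{2,\infty}$ is $L^2$-based and of negative index.  The plan is to split each product by Bony's decomposition and estimate the paraproducts and the remainder using \emph{different} Sobolev embeddings, feeding the low-frequency $L^2$ information of the other factor whenever $L^p$ regularity alone is insufficient.  The range $1-d/2<\sigma_1\le 2d/p-d/2$ is chosen precisely so that the resulting product rules close: the upper bound is exactly $\sigma_0$, the regularity at which the $L^2$-to-$L^p$ Bernstein embedding saturates the critical $L^p$ threshold $d/p$; the lower bound keeps the total regularity of the paraproducts positive.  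Every nonlinear coupling ends up absorbed by $\cX_p$, and this is what lets us drop the smallness requirement on $\|(a_0,u_0)\|^\ell_{\dot B^{-\sigma_1}_{2,\infty}}$.

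\emph{Step 2: interpolation and $L^p$ decay.}  I would next revisit the Lyapunov functional $\mathcal L$ corresponding to \eqref{R-E10} and its dissipation $\mathcal D$, which satisfy $\mathcal L'(t)+c\,\mathcal D(t)\le 0$ once the $\cX_p$-small nonlinearity is reabsorbed.  The standard real-interpolation inequality at low frequencies
\begin{equation*}
\|v\|^\ell_{\dot B^{d/2-1}_{2,1}}\;\lesssim\;\bigl(\|v\|^\ell_{\dot B^{-\sigma_1}_{2,\infty}}\bigr)^{\theta}\bigl(\|v\|^\ell_{\dot B^{d/2+1}_{2,1}}\bigr)^{1-\theta},\qquad \theta=\tfrac{2}{\sigma_1+d/2+1},
\end{equation*}
converts the Step-1 bound into a coercivity of the form $\mathcal D\gtrsim \mathcal L^{1+2/(\sigma_1+d/2-1)}$; elementary ODE comparison then gives $\mathcal L(t)\lesssim(1+t)^{-(\sigma_1+d/2-1)/2}$, the heat-kernel rate.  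For the $L^p$ statement I would split $\|\Lambda^\sigma(a,u)\|_{L^p}\le\|(a,u)\|^\ell_{\dot B^\sigma_{p,1}}+\|(a,u)\|^h_{\dot B^\sigma_{p,1}}$, bound the low-frequency piece by $\|(a,u)\|^\ell_{\dot B^{\sigma+d(1/2-1/p)}_{2,1}}$ via low-frequency Bernstein, and apply the same interpolation at the shifted regularity; the constraint $\sigma>-\tilde\sigma_1$ is exactly what keeps the interpolation exponent in $(0,1]$.  The high-frequency piece is controlled by a fixed Bernstein factor times $\|(a,u)\|^h_{\dot B^{d/p-1}_{p,1}}$ (since $\sigma\le d/p-1$) and inherits the decay of $\mathcal L$.

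The main obstacle is Step~1: obtaining a uniform-in-time bound on a negative-regularity $L^2$-based norm for a quasilinear system whose natural functional framework is $L^p$-based.  Without smallness of that very norm, every coupling between a high-regularity $L^p$ factor and a negative-regularity $L^2$ factor has to be handled by these non-standard, mixed-scale product estimates, and it is precisely the sharp interplay of Sobolev embeddings in those estimates that forces the stated range $\sigma_1\in(1-d/2,2d/p-d/2]$.
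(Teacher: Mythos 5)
Your plan reproduces the paper's argument essentially step for step: uniform-in-time propagation of $\|(a,u)\|^{\ell}_{\dot B^{-\sigma_1}_{2,\infty}}$ by a pure energy estimate on the spectrally localized system combined with non-standard Bony product estimates exploiting different Sobolev embeddings (this is Lemma \ref{lem5.1} together with Proposition \ref{prop5.1}, Corollary \ref{cor5.1} and Proposition \ref{prop5.2}), followed by the real-interpolation coercivity $\mathcal{D}\gtrsim \mathcal{L}^{1+2/(\sigma_1+d/2-1)}$ and the resulting Lyapunov ODE and shifted interpolations, exactly as in \eqref{R-E97}--\eqref{R-E104}. The one substantive point you gloss over is that the pointwise-in-time dissipation inequality $\mathcal{L}'+c\,\mathcal{D}\le 0$ is not automatic at high frequencies: since $a$ solves a transport equation with no smoothing, a direct energy estimate loses a derivative through $u\cdot\nabla a$, and the paper removes this obstruction via Haspot's effective velocity $w=\nabla(-\Delta)^{-1}(a-\div u)$ together with a commutator estimate (Lemma \ref{lem4.2}).
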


Furthermore, by using improved Gagliardo-Nirenberg inequalities, one can obtain the following optimal decay estimates of $\dot{B}^{-\sigma_{1}}_{2,\infty}$-$L^r$ type.
\begin{cor}\label{cor1.1}
Let those assumptions of Theorem \ref{thm1.2} be fulfilled. Then the corresponding solution $(a,u)$ admits the following decay estimates
\begin{eqnarray}\label{R-E13}
\|\Lambda^{l}(a,u)\|_{L^r}\lesssim \bigl (1+t)^{-\frac d2(\frac 12-\frac 1r)-\frac {l+\sigma_{1}}{2}},
\end{eqnarray}
for $p\leq r\leq\infty$, $l\in\R$ and
$-\tilde{\sigma}_1<l+d\big(\frac1p-\frac1r\big)\leq \frac dp-1\cdotp$
\end{cor}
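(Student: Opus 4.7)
The plan is to derive Corollary \ref{cor1.1} from Theorem \ref{thm1.2} via a Gagliardo-Nirenberg-type interpolation, which reduces $L^r$ norms (for $r \geq p$) to $L^p$ norms with a shifted derivative index. The core observation is the algebraic identity
$$\frac{d}{2}\Big(\frac{1}{2} - \frac{1}{r}\Big) + \frac{l + \sigma_{1}}{2} \;=\; \frac{d}{2}\Big(\frac{1}{2} - \frac{1}{p}\Big) + \frac{\sigma + \sigma_{1}}{2},\quad \sigma := l + d\Big(\frac{1}{p} - \frac{1}{r}\Big),$$
which shows that the claimed decay rate for $\|\Lambda^l(a,u)\|_{L^r}$ matches exactly the rate produced by Theorem \ref{thm1.2} at regularity $\sigma$. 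Moreover, the hypothesis $-\tilde\sigma_{1} < l + d(1/p - 1/r) \leq d/p - 1$ places $\sigma$ precisely in the admissible range $(-\tilde\sigma_{1},\, d/p - 1]$ of Theorem \ref{thm1.2}.

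For $p \leq r < \infty$, I would invoke the homogeneous Sobolev embedding
$$\|\Lambda^{l} f\|_{L^{r}} \lesssim \|\Lambda^{\sigma} f\|_{L^{p}},$$
apply Theorem \ref{thm1.2} at regularity $\sigma$, and conclude by the identity above. A uniform formulation that also encompasses $r = \infty$ is the improved Gagliardo-Nirenberg inequality
$$\|\Lambda^{l} f\|_{L^{r}} \lesssim \|\Lambda^{\sigma_{a}} f\|_{L^{p}}^{1-\theta}\,\|\Lambda^{\sigma_{b}} f\|_{L^{p}}^{\theta}$$
with $(1-\theta)\sigma_{a} + \theta\sigma_{b} = \sigma$ and both $\sigma_{a},\sigma_{b} \in (-\tilde\sigma_{1},\, d/p - 1]$; since the decay exponent produced by Theorem \ref{thm1.2} is affine in $\sigma$, the weighted geometric mean yields exactly the claimed rate.

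For the pure endpoint $r = \infty$, the Sobolev embedding degenerates and one must instead use the critical Besov embedding $\dot B^{d/p}_{p,1} \hookrightarrow L^{\infty}$, giving
$$\|\Lambda^{l} f\|_{L^{\infty}} \lesssim \|\Lambda^{l + d/p} f\|_{\dot B^{0}_{p,1}},$$
paired with the slightly stronger $\dot B^{0}_{p,1}$-norm version of Theorem \ref{thm1.2} announced in the abstract, applied at $\sigma = l + d/p \leq d/p - 1$. This endpoint refinement is the only non-routine ingredient; once it is in place, the remainder is the one-line scaling computation above.
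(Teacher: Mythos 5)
Your proposal is correct and follows essentially the same route as the paper: the paper's proof is exactly your ``uniform formulation,'' namely Proposition \ref{prop3.8} with $q=p$, the two interpolation endpoints $m=\frac dp-1$ and $k=-\tilde\sigma_1+\ep$ both lying in the admissible range of Theorem \ref{thm1.2}, and the affine dependence of the decay exponent on the regularity index doing the rest. The only superfluous part is your separate treatment of $r=\infty$: Proposition \ref{prop3.8} is stated for $1\leq q\leq r\leq\infty$, so the endpoint is already covered and the Besov-embedding detour is not needed.
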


Some comments are in order.
\begin{enumerate}
\item In comparison with previous efforts, the new ingredient is that
\textit{the smallness of low frequencies of initial data} is no longer needed for the time-decay properties of solutions in Theorem \ref{thm1.2} and Corollary \ref{cor1.1}. Furthermore, choosing the endpoint regularity for example $\sigma_1\equiv\sigma_0=d/2$ (if $p=2$) allows to go back to the classical decay estimates \eqref{R-E4}-\eqref{R-E5} with aid of the Sobolev embedding $L^1\hookrightarrow\dot{B}^{-d/2}_{2,\infty}$. Clearly, there is some freedom in the choice  of $\sigma_1$, which enables us to obtain more optimal decay estimates in the $L^p$ framework.
    \smallbreak
\item In \cite{DX} (setting $\sigma_1=\sigma_0$), there is a little loss on those decay rates due to different Sobolev embeddings at low frequencies and high frequencies. In fact, it was shown that the solution itself decayed to equilibrium in $L^p$ norm with the rate of $O\Big(t^{-d(1/p-1/4)}\Big)$ for $t\rightarrow\infty$. The present results avoid this minor flaw and indicate the optimal decay as fast as $t^{-d/2p}$, which are satisfactory.
 \smallbreak
\item In the energy argument with interpolation, the optimality of lower bound of $\sigma_1$ can be confirmed by non standard $L^p$
 product estimates in Corollary \ref{cor5.1} and a couple of interpolation inequalities like \eqref{R-E82}, \eqref{R-E98} and \eqref{R-E102}.
\smallbreak
\item In physical dimensions $d=2,3,$  Condition \eqref{R-E7} allows to consider the case  $p>d$ for which the
velocity regularity exponent $d/p-1$  becomes negative.
The result thus applies to \emph{large} highly oscillating initial velocities (see \cite{CD,CMZ} for more explanations).
\smallbreak
 \item  Finally, it is worth pointing out our approach is of \textit{independent interest} in the $L^p$ critical framework, which is likely to
 effective for other systems that are encountered in fluid dynamics.
\end{enumerate}
\medbreak

In what follows, let us give some illustration on Theorem \ref{thm1.2}. Throughout the proof,
our task is to establish a Lyapunov-type inequality in time for energy norms (see (\ref{R-E99})) by using the pure energy argument (independent of spectral analysis). In fact, the idea comes from the previous works \cite{GW,SG}. In Sobolev spaces with higher regularity, they deduced the Lyapunov-type inequality with respect to the time variable by using a family of scaled energy estimates with minimum derivative counts and interpolations among them, which leads to desired time-decay estimates for several Boltzmann type equations and compressible Navier-Stokes equations. In the $L^p$ critical regularity framework however, one cannot afford any loss of regularity for the high frequency part of the solution (and some terms as $u\cdot\nabla a$ induce a loss of one derivative since there is no smoothing for $a$, solution of a transport equation), so their approach fails to take effect in critical spaces.

Precisely, the proof is divided into several lemmas. Lemma \ref{lem4.1} is devoted to the linear low-frequency estimate. As in \cite{DR1}, we decompose the velocity into the ``incompresible part $\omega$" and the ``compressible part $v$". Then it suffices to study the mixed system between $a$ and $v$, since $\omega$ satisfies, up to nonlinear terms, a mere heat equation. Hyperbolic energy methods enable us to obtain the parabolic decay for the low frequencies of $(a,u)$.
Lemma \ref{lem4.2} is dedicated to the linear high-frequency estimate. In order to eliminate the major difficulty that there is a loss of one derivative of density, we employ the $L^p$ energy approach in terms of \textit{effective velocity} $w$ (given by \eqref{R-E29}) that was first used in the critical framework by Haspot \cite{H}. The interested reader is also referred to \cite{HD,VK} for the use of viscous effective flux. The dissipative mechanism of \eqref{R-E1}-\eqref{R-E2} is well exhibited by Lemmas \ref{lem4.1}-\ref{lem4.2}. Next, let us highlight new ingredients in this paper. Lemma \ref{lem5.1} is responsible for the evolution of Besov norm $\dot{B}^{-\sigma_{1}}_{2,\infty}$ (restricted in the low-frequency part) of solutions. Due to the general regularity that $\sigma_1$ belongs to $(1-d/2,\sigma_0]$, the proof of Lemma \ref{lem5.1} is quite technical. More precisely, by using low and high frequency decompositions,
one splits the nonlinear term $(f,g)$ into $(f^\ell,g^\ell)$ and $(f^h,g^h)$ (see the context below). To deal with $\|(f,g)^{\ell}\|^{\ell}_{\dot{B}^{-\sigma_{1}}_{2,\infty}}$, some \textit{new and non standard} Besov product estimates (see Proposition \ref{prop5.1}) are developed by means of Bony's para-product decomposition, which can be regarded as the supplement of Proposition \ref{prop3.5}. Indeed, the inequalities \eqref{R-E53}-\eqref{R-E54} in Corollary \ref{cor5.1} are enough for us to derive desired nonlinear estimates. On the other hand, bounding
$\|(f,g)^{h}\|^{\ell}_{\dot{B}^{-\sigma_{1}}_{2,\infty}}$ seems to be more elaborate.
The non oscillation case $(2\leq p\leq d$) and the oscillation case $(p>d$) are handled separately by using different Sobolev embeddings and Besov product estimates. Combining these analysis leads to the evolution of Besov norm $\dot{B}^{-\sigma_{1}}_{2,\infty}$ of solutions. Finally, nonlinear product estimates (in the $L^p$ framework) and real interpolations lead to the Lyapunov-type inequality \eqref{R-E99} for energy norms.

We end the section with a simple overview of main results. In comparison with \cite{DX,XJ}, a different decay framework has been
 established by employing $L^p$ energy methods (independent of spectral analysis), which allows to remove
 \textit{the smallness assumption of low frequencies of initial data}. Of course, vacuum is ruled out in this paper. In the presence of vacuum, the corresponding mathematical theory for viscous fluids is still far away well known in critical spaces, which is left in the forthcoming work.

The rest of the paper unfolds as follows: In Section 3, we recall briefly the Littlewood-Paley decomposition, Besov spaces and related analysis tools.
In Section 4, we establish the low-frequency and high-frequency estimates of solutions by using the pure energy arguments. Section 5 is devoted to bounding the evolution of negative Besov norms, which plays the key role in deriving the Lyapunov-type inequality for energy norms. In the last section (Section 6), we present the proofs of Theorem \ref{thm1.2} and Corollary \ref{cor1.1}.

\section{Preliminary}\setcounter{equation}{0}
Throughout the paper, $C>0$ stands for a generic harmless ``constant''. For brevity, we sometime write
$u\lesssim v$ instead of $u\leq Cv$. The notation $u\approx v$ means that $%
u\lesssim v$ and $v\lesssim u$. For any Banach space $X$ and $u,v\in X$,
$\left\|\left(u,v\right)\right\| _{X}\triangleq \left\|u\right\| _{X}+\left\|v\right\|_{X}$. For
all $T>0$ and $\rho\in\left[1,+\infty\right]$, one denotes by
$L_{T}^{\rho}(X) \triangleq L^{\rho}\left(\left[0,T\right];X\right)$ the set of measurable functions $u:\left[0,T\right]\rightarrow X$ such that $t\mapsto\left\|u(t)\right\|_{X}$ is in $L^{\rho}\left(0,T\right)$.

\subsection{Littlewood-Paley decomposition and Besov spaces}
Let us briefly recall the Littlewood-Paley decomposition and Besov spaces for convenience. More details
may be found for example in Chap. 2 and Chap. 3 of \cite{BCD}. Choose a smooth
radial non increasing function $\chi $ with $\mathrm{Supp}\,\chi \subset
B\left(0,\frac {4}{3}\right)$ and $\chi \equiv 1$ on $B\left(0,\frac
{3}{4}\right)$. Set $\varphi \left(\xi\right) =\chi \left(\xi/2\right)-\chi \left(\xi\right)$. It is not difficult to check that
$$
\sum_{j\in \mathbb{Z}}\varphi \left( 2^{-j}\cdot \right) =1\ \ \text{in}\ \
\mathbb{R}^{d}\setminus \{ 0\} \ \ \text{and}\ \ \mathrm{Supp}\,\varphi \subset \left\{ \xi \in \mathbb{R}^{d}:3/4\leq |\xi|\leq 8/3\right\} .
$$
Then homogeneous dyadic blocks $\dot{\Delta}_j(j\in \mathbb{Z})$ are defined by
$$
\dot{\Delta}_{j}u\triangleq \varphi (2^{-j}D)u=\mathcal{F}^{-1}(\varphi
(2^{-j}\cdot )\mathcal{F}u)=2^{jd}h(2^{j}\cdot )\star u\ \ \hbox{with}\ \
h\triangleq \mathcal{F}^{-1}\varphi .
$$
Consequently, one has the unit decomposition for any tempered distribution $u\in S^{\prime }(\mathbb{R}^{d})$
\begin{equation}\label{R-E14}
u=\sum_{j\in \mathbb{Z}}\dot{\Delta}_{j}u.
\end{equation}
As it holds only modulo polynomials, it is convenient to consider the subspace of those tempered distributions $u$ such that
\begin{equation}\label{R-E15}
\lim_{j\rightarrow -\infty }\| \dot{S}_{j}u\| _{L^{\infty} }=0,
\end{equation}
where $\dot{S}_{j}f$ stands for the low frequency cut-off defined by $\dot{S}_{j}u\triangleq\chi \left(2^{-j}D\right)u$. Indeed,
if \eqref{R-E15} is fulfilled, then \eqref{R-E14} holds in $S'\left(\mathbb{R}^{d}\right)$. Hence, we denote by $S'_{0}\left(\mathbb{R}^{d}\right)$ the subspace of tempered distributions satisfying \eqref{R-E15}.

Based on those dyadic blocks,  Besov spaces are defined as follows.
\begin{defn}\label{Defn3.1}
For $s\in \mathbb{R}$ and $1\leq p,r\leq\infty,$ the homogeneous
Besov spaces $\dot{B}^{s}_{p,r}$ are defined by
$$\dot{B}^{s}_{p,r}\triangleq\left\{u\in S'_{0}:\left\|u\right\|_{\dot{B}^{s}_{p,r}}<+\infty\right\},$$
where
\begin{equation}\label{R-E16}
\left\|u\right\|_{\dot B^{s}_{p,r}}\triangleq\|(2^{js}\|\dot{\Delta}_{j}u\|_{L^p})\|_{\ell^{r}(\mathbb{Z})}.
\end{equation}
\end{defn}
Moreover, a class of mixed space-time Besov spaces are also used when studying evolutionary PDEs, which was introduced by J.-Y. Chemin and N. Lerner \cite{CL} (see also \cite{CJY} for the particular case of Sobolev spaces).
\begin{defn}\label{Defn3.2}
 For $T>0, s\in\mathbb{R}, 1\leq r,\theta\leq\infty$, the homogeneous Chemin-Lerner space $\widetilde{L}^{\theta}_{T}(\dot{B}^{s}_{p,r})$
is defined by
$$\widetilde{L}^{\theta}_{T}(\dot{B}^{s}_{p,r})\triangleq\left\{u\in L^{\theta}\left(0,T;S'_{0}\right):\left\|u\right\|_{\widetilde{L}^{\theta}_{T}(\dot{B}^{s}_{p,r})}<+\infty\right\},$$
where
\begin{equation}\label{R-E17}
\|u\|_{\widetilde{L}^{\theta}_{T}(\dot{B}^{s}_{p,r})}\triangleq\|(2^{js}\| \dot{\Delta}_{j}u\|_{L^{\theta}_{T}(L^{p})})\|_{\ell^{r}(\mathbb{Z})}.
\end{equation}
\end{defn}
For notational simplicity, index $T$ is omitted if $T=+\infty $.
We also use the following functional space:
\begin{equation}\label{R-E177}
\tilde{\mathcal{C}}_{b}(\mathbb{R_{+}};\dot{B}_{p,r}^{s})\triangleq \left\{u \in
\mathcal{C}(\mathbb{R_{+}};\dot{B}_{p,r}^{s})\ \hbox{s.t}\ \left\|u\right\| _{\tilde{L}^{\infty}(\dot{B}_{p,r}^{s})}<+\infty \right\} .
\end{equation}
Thanks to the Minkowski's inequality, one has the following topological relation between Chemin-Lerner's spaces and the standard mixed spaces
$L_{T}^{\theta} (\dot{B}_{p,r}^{s})$.
\begin{rem}\label{Rem3.1}
It holds that
$$\left\|u\right\|_{\widetilde{L}^{\theta}_{T}(B^{s}_{p,r})}\leq\left\|u\right\|_{L^{\theta}_{T}(B^{s}_{p,r})} \ \
\mbox{if} \ \ r\geq\theta;\ \ \ \
\left\|u\right\|_{\widetilde{L}^{\theta}_{T}(B^{s}_{p,r})}\geq\left\|u\right\|_{L^{\theta}_{T}(B^{s}_{p,r})} \ \
\mbox{if} \ \ r\leq\theta.
$$
\end{rem}
Restricting the above norms \eqref{R-E16} and \eqref{R-E17} to the low or high
frequencies parts of distributions will be crucial in our approach. For example, let us fix some integer $j_{0}$ (the value of which will
follow from the proof of the main theorem) and set\footnote{Note that for technical reasons, we need a small
overlap between low and high frequencies.}
$$\left\| u\right\| _{\dot{B}_{p,1}^{s}}^{\ell} \triangleq \sum_{j\leq
j_{0}}2^{js}\| \dot{\Delta}_{j}f\|_{L^{p}} \ \ \mbox{and} \ \ \|u\|_{\dot{B}_{p,1}^{s}}^{h}\triangleq \sum_{j\geq j_{0}-1}2^{js}\| \dot{\Delta}_{j}u\| _{L^{p}},$$
$$\left\|u\right\| _{\tilde{L}_{T}^{\infty} (\dot{B}_{p,1}^{s})}^{\ell} \triangleq
\sum_{j\leq j_{0}}2^{js}\|\dot{\Delta}_{j}u\|_{L_{T}^{\infty} (L^{p})} \ \
\mbox{and} \ \ \|u\| _{\tilde{L}_{T}^{\infty} (\dot{B}_{p,1}^{s})}^{h}\triangleq \sum_{j\geq j_{0}-1}2^{js}\| \dot{\Delta}_{j}u\|
_{L_{T}^{\infty} (L^{p})}.$$

\subsection{Analysis tools in Besov spaces}
Recall the classical \emph{Bernstein inequality}:
\begin{equation}\label{R-E18}
\|D^{k}u\|_{L^{b}}
\leq C^{1+k} \lambda^{k+d(\frac{1}{a}-\frac{1}{b})}\left\|u\right\|_{L^{a}}
\end{equation}
that holds for all function $u$ such that $\mathrm{Supp}\,\mathcal{F}u\subset\left\{\xi\in \R^{d}: \left|\xi\right|\leq R\lambda\right\}$ for some $R>0$
and $\lambda>0$, if $k\in\N$ and $1\leq a\leq b\leq\infty$.

More generally, if $u$ satisfies $\mathrm{Supp}\,\mathcal{F}u\subset \left\{\xi\in \R^{d}:
R_{1}\lambda\leq\left|\xi\right|\leq R_{2}\lambda\right\}$ for some $0<R_{1}<R_{2}$  and $\lambda>0$,
then for any smooth  homogeneous of degree $m$ function $A$ on $\R^d\setminus\{0\}$ and $1\leq a\leq\infty,$ it holds that
(see e.g. Lemma 2.2 in \cite{BCD}):
\begin{equation}\label{R-E19}
\left\|A(D)u\right\|_{L^{a}}\lesssim\lambda^{m}\left\|u\right\|_{L^{a}}.
\end{equation}
An obvious  consequence of \eqref{R-E18} and \eqref{R-E19} is that
$\left\|D^{k}u\right\|_{\dot{B}^{s}_{p, r}}\thickapprox\left \|u\right\|_{\dot{B}^{s+k}_{p, r}}$ for all $k\in\N.$

The following nonlinear generalization of \eqref{R-E19} will be also used (see Lemma 8 in \cite{D1}):
\begin{prop}\label{prop3.1}
If $\mathrm{Supp}\,\mathcal{F}f\subset \{\xi\in \mathbb{R}^{d}:
R_{1}\lambda\leq|\xi|\leq R_{2}\lambda\}$ then there exists $c$ depending only on $d,$ $R_1$  and $R_2$
so that for all $1<p<\infty,$
$$c\lambda^2\biggl(\frac{p-1}p\biggr)\int_{\R^d}|f|^p\,dx\leq (p-1)\int_{\R^d}|\nabla f|^2|f|^{p-2}\,dx
=-\int_{\R^d}\Delta f\: |f|^{p-2}f\,dx.
$$
\end{prop}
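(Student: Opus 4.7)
The right-hand equality is a routine integration-by-parts identity. For real $f$, the chain rule gives $\nabla(|f|^{p-2}f)=(p-1)|f|^{p-2}\nabla f$ (the complex case follows by pairing with $\bar f$), so
$$-\int_{\R^d}\Delta f\cdot|f|^{p-2}f\,dx=\int_{\R^d}\nabla f\cdot\nabla(|f|^{p-2}f)\,dx=(p-1)\int_{\R^d}|\nabla f|^{2}|f|^{p-2}\,dx.$$
The spectral localization of $f$ makes it a Schwartz function, so the boundary terms vanish and the manipulation is legitimate.

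For the leftmost inequality, the plan is to exploit the heat semigroup $H_{t}:=e^{t\Delta}$. Since $H_{t}$ is a Fourier multiplier by $e^{-t|\xi|^{2}}$, it preserves the annular Fourier support of $f$, so $u(s):=H_{s}f$ remains spectrally localized in $\{R_{1}\lambda\le|\xi|\le R_{2}\lambda\}$ and every Bernstein-type bound available for $f$ also applies to $u(s)$. Applying the equality just established to $u(s)$ yields the dissipation identity
$$\frac{d}{ds}\|u(s)\|_{L^{p}}^{p}=-p(p-1)\int_{\R^d}|u(s)|^{p-2}|\nabla u(s)|^{2}\,dx.$$

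The second ingredient I would use is the $L^p$ decay of the heat semigroup on spectrally localized functions: writing $H_{t}f=K_{t}\star f$ with $K_{t}=\bigl(e^{-t|\xi|^{2}}\phi(\xi/\lambda)\bigr)^{\vee}$ for a smooth bump $\phi$ equal to $1$ on the support annulus, the pointwise bound $e^{-t|\xi|^2}\le e^{-c_0 t\lambda^2}$ on $\mathrm{Supp}\,\phi(\cdot/\lambda)$ together with a scaling argument on the kernel gives $\|H_{t}f\|_{L^{p}}\le C e^{-c_{0}t\lambda^{2}}\|f\|_{L^{p}}$ with $C,c_{0}$ depending only on $d,R_{1},R_{2}$. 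Picking $t^{*}$ of order $1/\lambda^{2}$ so that $\|u(t^{*})\|_{L^{p}}^{p}\le\tfrac12\|f\|_{L^{p}}^{p}$ and integrating the dissipation identity from $0$ to $t^{*}$ yields
$$\tfrac12\|f\|_{L^{p}}^{p}\le p(p-1)\int_{0}^{t^{*}}\!\!\int_{\R^d}|u(s)|^{p-2}|\nabla u(s)|^{2}\,dx\,ds.$$
Bounding the time integrand at $s>0$ in terms of its value at $s=0$ (via the monotonicity of $s\mapsto\int|u(s)|^{p-2}|\nabla u(s)|^{2}dx$, itself coming from the convexity of $s\mapsto\|u(s)\|_{L^{p}}^{p}$ along the heat flow) reduces the right-hand side to $p(p-1)t^{*}\int|f|^{p-2}|\nabla f|^{2}dx$, and $t^{*}\lesssim 1/\lambda^{2}$ then rearranges to the claimed inequality.

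The main obstacle is a careful bookkeeping of the constants as functions of $p$: the semigroup decay comes with a prefactor $C$ that may exceed one, so the choice of $t^{*}$ costs a factor $\log C$, and one has to verify that, after absorbing it, the final constant depends only on $d,R_{1},R_{2}$. The weight $\frac{p-1}{p}$ on the left-hand side of the statement is precisely what provides the slack to absorb this $p$-dependence; one may also replace the monotonicity step by a direct comparison of $u(s)$ and $f$ through Bernstein's lemma if convexity of $s\mapsto\|u(s)\|_{L^{p}}^{p}$ is to be bypassed.
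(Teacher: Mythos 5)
The paper offers no proof of this proposition --- it is quoted verbatim from Lemma 8 in \cite{D1} --- so your argument must stand on its own. The integration-by-parts identity and the semigroup set-up (the dissipation identity for $\|e^{s\Delta}f\|_{L^p}^p$, the exponential $L^p$ decay of $e^{s\Delta}$ on annulus-localized data, the choice $t^*\sim\lambda^{-2}$) are all correct and are a natural way to attack this reverse Bernstein inequality. The genuine gap is the step you use to close the argument: the monotonicity of $s\mapsto\int|u(s)|^{p-2}|\nabla u(s)|^2\,dx$, which you justify by ``the convexity of $s\mapsto\|u(s)\|_{L^p}^p$ along the heat flow.'' Since $\frac{d}{ds}\|u(s)\|_{L^p}^p=-p(p-1)\int|u(s)|^{p-2}|\nabla u(s)|^2\,dx$, these two assertions are literally equivalent, so invoking one to prove the other is circular; and neither is elementary for $p\neq2$. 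Differentiating the energy integrand in $s$ produces, besides the favorable term $-2\int|\Delta u|^2|u|^{p-2}\,dx$, the signless term $-(p-2)\int\Delta u\,|u|^{p-4}u\,|\nabla u|^2\,dx$, whose control is precisely the crux of the lemma (the whole difficulty being that the weight $|f|^{p-2}$ destroys spectral localization). This is at best a non-trivial theorem about symmetric Markov semigroups that requires a proof or a reference, and your sketch supplies neither. Your proposed fallback --- ``a direct comparison of $u(s)$ and $f$ through Bernstein's lemma'' --- provably cannot substitute for it: H\"older and Bernstein only give the upper bound $\int|u(s)|^{p-2}|\nabla u(s)|^2\,dx\lesssim\lambda^2\|f\|_{L^p}^p$, and inserting that into your integrated identity returns $\tfrac12\|f\|_{L^p}^p\lesssim p(p-1)\|f\|_{L^p}^p$, a tautology yielding no lower bound on $\int|\nabla f|^2|f|^{p-2}\,dx$.

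There is also a quantitative shortfall even if the monotonicity were granted. Your scheme gives $\tfrac12\|f\|_{L^p}^p\le p(p-1)\,t^*\int|\nabla f|^2|f|^{p-2}\,dx$ with $t^*\approx C_0\lambda^{-2}$ and $C_0=C_0(d,R_1,R_2)$ (note the semigroup bound $\|u(t)\|_{L^p}^p\le C^pe^{-p\kappa t\lambda^2}\|f\|_{L^p}^p$ forces $t^*\gtrsim\lambda^{-2}\log C$, uniformly in $p$). Rearranging yields $(p-1)\int|\nabla f|^2|f|^{p-2}\,dx\ge c\,\lambda^2\,p^{-1}\|f\|_{L^p}^p$, whereas the proposition claims the right-hand side $c\,\lambda^2\,\tfrac{p-1}{p}\|f\|_{L^p}^p$: you lose a factor $p-1$, so your constant degenerates as $p\to\infty$, contrary to the statement that $c$ depends only on $d$, $R_1$, $R_2$. (Testing with $f$ close to a pure wave $\cos(\lambda x_1)$, for which $\int|\nabla f|^2|f|^{p-2}\,dx\approx\tfrac{\lambda^2}{p-1}\int|f|^p\,dx$, shows the stated constant is essentially sharp, so the loss is real.) For the application in this paper, where $p\in[2,4]$ is fixed, the weaker constant would suffice; but as a proof of the proposition as stated the argument is incomplete on both counts, and the first gap --- the unproved monotonicity --- is the essential one.
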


The following classical properties are often used (see \cite{BCD}):
\begin{prop}\label{prop3.2}
\begin{itemize}
\item \ \emph{Scaling invariance:} For any $s\in \mathbb{R}$ and $(p,r)\in
[1,\infty ]^{2}$, there exists a constant $C=C(s,p,r,d)$ such that for all $\lambda >0$ and $u\in \dot{B}_{p,r}^{s}$, then
$$
C^{-1}\lambda ^{s-\frac {d}{p}}\left\|u\right\|_{\dot{B}_{p,r}^{s}}
\leq \left\|u(\lambda \cdot)\right\|_{\dot{B}_{p,r}^{s}}\leq C\lambda ^{s-\frac {d}{p}}\left\|u\right\|_{\dot{B}_{p,r}^{s}}.
$$

\item \ \emph{Completeness:} $\dot{B}^{s}_{p,r}$ is a Banach space whenever $%
s<\frac{d}{p}$ or $s\leq \frac{d}{p}$ and $r=1$.

\item \ \emph{Action of Fourier multipliers:} If $F$ is a smooth homogeneous of
degree $m$ function on $\mathbb{R}^{d}\backslash \{0\}$, then
$$F(D):\dot{B}_{p,r}^{s}\rightarrow \dot{B}_{p,r}^{s-m}.$$
\end{itemize}
\end{prop}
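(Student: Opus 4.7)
My plan is to prove the three items of Proposition~\ref{prop3.2} separately, each by exploiting the Littlewood--Paley characterisation of Definition~\ref{Defn3.1}; all three are classical facts about homogeneous Besov spaces. For the scaling invariance, I would first compute $\mathcal{F}[u(\lambda\cdot)](\xi)=\lambda^{-d}\hat u(\xi/\lambda)$ and substitute into $\dot\Delta_j=\varphi(2^{-j}D)$; a change of variable in the Fourier integral yields the identity $\dot\Delta_j[u(\lambda\cdot)](x)=[\varphi(2^{-j}\lambda D)u](\lambda x)$. Writing $\lambda=2^m\mu$ with $m\in\Z$ and $\mu\in[1,2)$, the operator $\varphi(2^{-j}\lambda D)$ is spectrally localised in an annulus whose radii are comparable (with constants depending only on $\mu$) to those of the standard dyadic annulus at index $j-m$, so $\|\varphi(2^{-j}\lambda D)u\|_{L^p}$ is two-sided comparable to $\|\dot\Delta_{j-m}u\|_{L^p}$. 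The $L^p$ change of variables then produces the factor $\lambda^{-d/p}$, and reindexing the $\ell^r$-sum defining $\|\cdot\|_{\dot B^s_{p,r}}$ produces $2^{ms}\approx\lambda^s$, giving the two-sided bound of the stated form.

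For completeness, given a Cauchy sequence $(u_n)\subset\dot B^s_{p,r}$, each $(\dot\Delta_j u_n)_n$ is Cauchy in $L^p$ and converges to some $u_j\in L^p$. The candidate limit is $u:=\sum_{j\in\Z}u_j$, and one needs to check that it converges in $S'$, belongs to $S'_0$, satisfies $\dot\Delta_j u=u_j$, and realises the norm limit. The high-frequency sum $\sum_{j\geq 0}u_j$ converges in $S'$ without restriction, by pairing against Schwartz functions and exploiting the spectral localisation of $u_j$. The low-frequency sum $\sum_{j<0}u_j$ is the delicate step: Bernstein's inequality~\eqref{R-E18} yields $\|u_j\|_{L^\infty}\lesssim 2^{jd/p}\|u_j\|_{L^p}$, and a H\"older argument in $\ell^r(\Z)$ combined with $(2^{js}\|u_j\|_{L^p})_j\in\ell^r$ shows $\sum_{j<0}\|u_j\|_{L^\infty}<\infty$ precisely when $s<d/p$, or $s=d/p$ with $r=1$. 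The same estimate forces \eqref{R-E15} for $u$, so $u\in S'_0$; Fatou's lemma on $\ell^r(\Z)$ then gives $u_n\to u$ in norm.

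For the action of Fourier multipliers, since $F(D)$ commutes with $\dot\Delta_j$ and each $\dot\Delta_j u$ is spectrally localised in the annulus $\{(3/4)\cdot 2^j\leq|\xi|\leq(8/3)\cdot 2^j\}$, estimate~\eqref{R-E19} applied with $A=F$ and $\lambda=2^j$ yields $\|F(D)\dot\Delta_j u\|_{L^p}\lesssim 2^{jm}\|\dot\Delta_j u\|_{L^p}$; multiplying by $2^{j(s-m)}$ and taking the $\ell^r(\Z)$-norm gives the continuity $F(D):\dot B^s_{p,r}\to\dot B^{s-m}_{p,r}$. The main obstacle I anticipate is the completeness step: the threshold $s<d/p$ or $(s,r)=(d/p,1)$ is sharp, because below it the low-frequency series converges only modulo polynomials, and the Bernstein-based bound above is exactly what makes the limit land in $S'_0$ rather than in $S'$ modulo polynomials.
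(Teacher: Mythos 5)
The paper offers no proof of Proposition~\ref{prop3.2} at all --- it simply cites \cite{BCD} --- and your three arguments are precisely the classical ones found there, so the proposal is correct and takes the standard route. One small imprecision in the scaling step: $\|\varphi(2^{-j}\lambda D)u\|_{L^p}$ is not termwise two-sided comparable to $\|\dot\Delta_{j-m}u\|_{L^p}$ (one of the two can vanish while the other does not); the correct step is the one-sided bound $\|\varphi(2^{-j}\lambda D)u\|_{L^p}\lesssim\sum_{|k-(j-m)|\leq N}\|\dot\Delta_k u\|_{L^p}$ coming from the finite overlap of the annuli, after which the lower bound in the proposition follows by applying the resulting upper estimate to $u(\lambda\cdot)$ with the scale $\lambda^{-1}$.
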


\begin{prop}\label{prop3.3}
Let $1\leq p, r,r_{1},r_{2}\leq \infty$.
\begin{itemize}
\item  Complex interpolation: if $u\in \dot{B}_{p,r_1}^{s}\cap \dot{B}_{p,r_2}^{\tilde{s}} $ and $s\neq \tilde{s}$, then $u\in \dot{B}_{p,r}^{\theta s+(1-\theta)\tilde{s}}$ for all $\theta\in (0,1)$ and
$$\left\|u\right\|_{\dot{B}_{p,r}^{\theta s+(1-\theta )\tilde{s}}}\leq \left\|u\right\|_{\dot{B}_{p,r_{1}}^{s}}^{\theta} \left\|u\right\|_{\dot{B}_{p,r_2}^{\tilde{s}}}^{1-\theta }$$
with $\frac{1}{r}=\frac{\theta}{r_{1}}+\frac{1-\theta}{r_{2}}$.

\item Real interpolation: if $u\in \dot{B}_{p,\infty}^{s}\cap \dot{B}_{p,\infty}^{\tilde{s}} $ and $s<\tilde{s}$, then $u\in \dot{B}_{p,1}^{\theta s+(1-\theta)\tilde{s}}$ for all $\theta\in (0,1)$ and
$$\left\|u\right\|_{\dot{B}_{p,1}^{\theta s+(1-\theta )\tilde{s}}}\leq \frac{C}{\theta(1-\theta)(\tilde{s}-s)} \left\|u\right\| _{\dot{B}_{p,\infty}^{s}}^{\theta} \left\|u\right\|_{\dot{B}_{p,\infty}^{\tilde{s}}}^{1-\theta }.$$
\end{itemize}
\end{prop}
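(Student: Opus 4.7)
The plan is to derive both inequalities directly from the sequential definition $\|u\|_{\dot B^{s}_{p,r}}=\|(2^{js}\|\dot\Delta_j u\|_{L^p})_j\|_{\ell^r(\Z)}$, so that the whole question collapses to weighted estimates on scalar sequences indexed by $\Z$. No Besov-specific machinery beyond \eqref{R-E16} is required.

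For the complex interpolation bound, the key algebraic observation is the pointwise identity
$$2^{j(\theta s+(1-\theta)\tilde s)}\|\dot\Delta_j u\|_{L^p}=\bigl(2^{js}\|\dot\Delta_j u\|_{L^p}\bigr)^{\theta}\bigl(2^{j\tilde s}\|\dot\Delta_j u\|_{L^p}\bigr)^{1-\theta},$$
valid for every $j\in\Z$. Taking $\ell^r(\Z)$-norms of both sides and applying H\"older's inequality with the conjugate exponents $r_1/\theta$ and $r_2/(1-\theta)$ — which are conjugate precisely because $\theta/r_1+(1-\theta)/r_2=1/r$ — produces the advertised product bound in a single line.

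For the real interpolation bound, I plan to use a dyadic splitting argument. Set $\sigma:=\theta s+(1-\theta)\tilde s$, $A:=\|u\|_{\dot B^{s}_{p,\infty}}$, $B:=\|u\|_{\dot B^{\tilde s}_{p,\infty}}$, and split, for any $N\in\Z$,
$$\|u\|_{\dot B^{\sigma}_{p,1}}=\sum_{j\le N}2^{j\sigma}\|\dot\Delta_j u\|_{L^p}+\sum_{j>N}2^{j\sigma}\|\dot\Delta_j u\|_{L^p}.$$
On the first block I would apply $\|\dot\Delta_j u\|_{L^p}\le 2^{-js}A$ and on the second $\|\dot\Delta_j u\|_{L^p}\le 2^{-j\tilde s}B$. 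Since $\sigma-s=(1-\theta)(\tilde s-s)>0$ and $\tilde s-\sigma=\theta(\tilde s-s)>0$, both resulting geometric series converge, and the elementary estimate $1/(1-2^{-x})\lesssim 1/x$ for small $x>0$ yields
$$\|u\|_{\dot B^{\sigma}_{p,1}}\lesssim\frac{2^{N(1-\theta)(\tilde s-s)}A}{(1-\theta)(\tilde s-s)}+\frac{2^{-N\theta(\tilde s-s)}B}{\theta(\tilde s-s)}.$$
Finally I would pick the integer $N$ that balances the two contributions, namely $N\approx(\tilde s-s)^{-1}\log_2(B/A)$; both terms then become of order $A^{\theta}B^{1-\theta}$, producing the required estimate with the explicit prefactor $C/(\theta(1-\theta)(\tilde s-s))$.

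The only nontrivial point is constant-tracking in the real-interpolation step: the two geometric prefactors must be kept separate so that the blow-ups as $\theta\to 0$, $\theta\to 1$ or $\tilde s\to s$ are each recorded faithfully before being combined into the single factor $1/(\theta(1-\theta)(\tilde s-s))$. Once this accounting is done, optimization over $N$ is immediate, and degenerate cases where $A$ or $B$ vanishes are handled by sending $N$ to $\pm\infty$.
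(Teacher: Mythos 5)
Your proof is correct and is exactly the classical argument behind this proposition, which the paper does not prove but simply quotes from the literature (\cite{BCD}): the pointwise splitting $2^{j(\theta s+(1-\theta)\tilde s)}\|\dot\Delta_j u\|_{L^p}=(2^{js}\|\dot\Delta_j u\|_{L^p})^{\theta}(2^{j\tilde s}\|\dot\Delta_j u\|_{L^p})^{1-\theta}$ followed by H\"older in $\ell^r$ for the complex case, and the dyadic cut at an optimized $N$ for the real case. One small caveat on constant-tracking: $1/(1-2^{-x})\lesssim 1/x$ holds only for bounded $x$, so your intermediate display, and hence the final prefactor $C/(\theta(1-\theta)(\tilde s-s))$ with a universal $C$, really requires $\tilde s-s$ to stay in a bounded range (otherwise the honest prefactor is $C\bigl(1+\tfrac{1}{\theta(1-\theta)(\tilde s-s)}\bigr)$, as a single nonzero dyadic block shows); this imprecision is already present in the statement as quoted and is harmless in the paper's applications, where $\tilde s-s$ is fixed.
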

The following embedding is also used frequently throughout this paper.
\begin{prop} \label{prop3.4} (Embedding for Besov spaces on $\mathbb{R}^{d}$)
\begin{itemize}
\item For any $p\in[1,\infty]$ we have the  continuous embedding
$\dot {B}^{0}_{p,1}\hookrightarrow L^{p}\hookrightarrow \dot {B}^{0}_{p,\infty}.$
\item If $\sigma\in\R$, $1\leq p_{1}\leq p_{2}\leq\infty$ and $1\leq r_{1}\leq r_{2}\leq\infty,$
then $\dot {B}^{\sigma}_{p_1,r_1}\hookrightarrow
\dot {B}^{\sigma-d(\frac{1}{p_{1}}-\frac{1}{p_{2}})}_{p_{2},r_{2}}$.
\item The space  $\dot {B}^{\frac {d}{p}}_{p,1}$ is continuously embedded in the set  of
bounded  continuous functions \emph{(}going to zero at infinity if, additionally, $p<\infty$\emph{)}.
\end{itemize}
\end{prop}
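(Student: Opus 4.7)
The plan is to establish the three embeddings separately, each time exploiting the Littlewood--Paley representation $u=\sum_j \dot{\Delta}_j u$ (valid in $S'_0$ thanks to (\ref{R-E15})), together with Bernstein's inequality (\ref{R-E18}) and Young's convolution inequality. The three parts are essentially independent, though they share the same toolbox, so I would tackle them in the order stated.

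For part one, the embedding $\dot{B}^{0}_{p,1}\hookrightarrow L^p$ would follow at once from the triangle inequality in $L^p$ applied to the decomposition (\ref{R-E14}), giving $\|u\|_{L^p}\le \sum_j \|\dot{\Delta}_j u\|_{L^p}=\|u\|_{\dot{B}^{0}_{p,1}}$. For the reverse embedding $L^p\hookrightarrow \dot{B}^{0}_{p,\infty}$, the formula $\dot{\Delta}_j u = 2^{jd}h(2^j\cdot)\star u$ together with Young's inequality yields $\|\dot{\Delta}_j u\|_{L^p}\le \|h\|_{L^1}\|u\|_{L^p}$ uniformly in $j\in\Z$, from which $\|u\|_{\dot{B}^0_{p,\infty}}\lesssim\|u\|_{L^p}$ is immediate.

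For part two, the core observation is that $\dot{\Delta}_j u$ has its Fourier support in an annulus of size $\approx 2^j$, so Bernstein's inequality provides $\|\dot{\Delta}_j u\|_{L^{p_2}}\lesssim 2^{jd(1/p_1-1/p_2)}\|\dot{\Delta}_j u\|_{L^{p_1}}$. Multiplying by $2^{j(\sigma-d(1/p_1-1/p_2))}$ converts weighted $L^{p_2}$ norms into weighted $L^{p_1}$ norms, and the elementary inclusion $\ell^{r_1}(\Z)\hookrightarrow\ell^{r_2}(\Z)$ (for $r_1\le r_2$) then closes the argument. This step is purely mechanical once Bernstein is in hand.

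Part three is the one where I expect the only real subtlety. The inequality $\|u\|_{L^\infty}\lesssim\|u\|_{\dot B^{d/p}_{p,1}}$ follows by the same Bernstein argument applied with $p_1=p$, $p_2=\infty$, $\sigma=d/p$, followed by the first embedding of part one applied at $p=\infty$. The point that requires some care is continuity and decay at infinity: each block $\dot{\Delta}_j u$ lies in $\mathcal{C}_0(\R^d)$ (it is a convolution of a tempered distribution with a Schwartz function, and by Bernstein it is simultaneously in $L^p$ and $L^\infty$), and the series $\sum_j \dot{\Delta}_j u$ converges \emph{absolutely} in $L^\infty$ under the $\dot B^{d/p}_{p,1}$ assumption; the uniform limit is therefore continuous and bounded. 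Vanishing at infinity for $p<\infty$ would be deduced from the fact that $\mathcal{C}_0(\R^d)$ is closed in $L^\infty$, once we notice that finite partial sums $\sum_{|j|\le N}\dot{\Delta}_j u$ already belong to $\mathcal{C}_0$. The main obstacle to watch out for throughout is making sure that the reconstruction $u=\sum_j \dot{\Delta}_j u$ really holds in $S'(\R^d)$ rather than only modulo polynomials, which is precisely what membership in $S'_0$ guarantees and why Besov spaces are defined on that subspace in Definition \ref{Defn3.1}.
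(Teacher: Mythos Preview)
Your argument is correct and is precisely the standard one. Note, however, that the paper does not actually prove Proposition~\ref{prop3.4}: it is listed in Section~3 among the preliminary tools quoted from the literature (chiefly \cite{BCD}), so there is no in-paper proof to compare against. What you have written is essentially the textbook derivation found there.
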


The following product estimates in Besov spaces play a key role in our analysis of the bilinear terms of \eqref{R-E11} (see \cite{BCD,DX}).
\begin{prop}\label{prop3.5}
Let $s>0$ and $1\leq p,r\leq\infty$. Then $\dot{B}^{s}_{p,r}\cap L^{\infty}$ is an algebra and
\begin{equation*}
\left\|uv\right\|_{\dot{B}^{s}_{p,r}}\lesssim \left\|u\right\|_{L^{\infty}}\left\|v\right\|_{\dot{B}^{s}_{p,r}}+\left\|v\right\|_{L^{\infty}}\left\|u\right\|_{\dot{B}^{s}_{p,r}}.
\end{equation*}
Let the real numbers $s_{1},$ $s_{2},$ $p_1$  and $p_2$ be such that
\begin{equation*}
s_{1}+s_{2}>0,\quad s_{1}\leq\frac {d}{p_{1}},\quad s_{2}\leq\frac {d}{p_{2}},\quad
s_{1}\geq s_{2},\quad\frac{1}{p_{1}}+\frac{1}{p_{2}}\leq1.
\end{equation*}
Then it holds that
\begin{equation*}
\left\|uv\right\|_{\dot{B}^{s_{2}}_{q,1}}\lesssim \left\|u\right\|_{\dot{B}^{s_{1}}_{p_{1},1}}\left\|v\right\|_{\dot{B}^{s_{2}}_{p_{2},1}}\quad\hbox{with}\quad
\frac1{q}=\frac1{p_{1}}+\frac1{p_{2}}-\frac{s_{1}}d\cdotp
\end{equation*}
Additionally, for exponents $s>0$ and $1\leq p_{1},p_{2},q\leq\infty$ satisfying
\begin{equation*}
\frac{d}{p_{1}}+\frac{d}{p_{2}}-d\leq s \leq\min\left(\frac {d}{p_{1}},\frac {d}{p_{2}}\right)\quad\hbox{and}\quad \frac{1}{q}=\frac {1}{p_{1}}+\frac {1}{p_{2}}-\frac{s}{d},
\end{equation*}
one has
\begin{equation*}
\left\|uv\right\|_{\dot{B}^{-s}_{q,\infty}}\lesssim\left\|u\right\|_{\dot{B}^{s}_{p_{1},1}}\left\|v\right\|_{\dot{B}^{-s}_{p_{2},\infty}}.
\end{equation*}
\end{prop}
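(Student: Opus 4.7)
The plan is to rely systematically on Bony's paraproduct decomposition
$$uv = T_u v + T_v u + R(u,v),$$
where $T_u v \triangleq \sum_{j} \dot S_{j-1} u\, \dot\Delta_j v$ and $R(u,v)\triangleq \sum_{j}\sum_{|j-j'|\le 1} \dot\Delta_j u\, \dot\Delta_{j'} v$, combined with two spectral-localization facts: the level-$j$ block of $T_u v$ has Fourier support in an annulus of radius $\sim 2^j$, while that of $R(u,v)$ lives in a ball of radius $\sim 2^j$. Bernstein's inequality \eqref{R-E18} will repeatedly convert regularity into integrability whenever a frequency-localized factor has to migrate between $L^a$ and $L^b$.

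For the first (algebra) bound I would estimate the three pieces separately in $\dot B^s_{p,r}$. Since $\|\dot S_{j-1} u\|_{L^\infty}\le\|u\|_{L^\infty}$ and the annular support forces $\dot\Delta_k(T_u v)$ to vanish unless $|k-j|$ is bounded, one reads off $\|T_u v\|_{\dot B^s_{p,r}} \lesssim \|u\|_{L^\infty}\|v\|_{\dot B^s_{p,r}}$ (and $T_v u$ is symmetric). For the remainder, $\dot\Delta_k R(u,v)\neq 0$ forces $j\ge k-N_0$, giving
$$2^{ks}\|\dot\Delta_k R(u,v)\|_{L^p} \lesssim \|u\|_{L^\infty}\sum_{j\ge k-N_0} 2^{(k-j)s}\bigl(2^{js}\|\dot\Delta_j v\|_{L^p}\bigr);$$
the hypothesis $s>0$ makes the discrete kernel $2^{(k-j)s}\mathbf{1}_{j\ge k-N_0}$ summable and Young's inequality for sequences closes the bound in $\ell^r$. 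The second estimate uses the same decomposition, now enlisting Bernstein on each block to reach the target exponent $q$ with $1/q = 1/p_1+1/p_2-s_1/d$: on $\dot S_{j-1} u\,\dot\Delta_j v$ one applies H\"older with auxiliary exponent $r_1$ satisfying $1/r_1=1/p_1-s_1/d$, combined with Bernstein to control $\|\dot S_{j-1} u\|_{L^{r_1}}$ by $\|u\|_{\dot B^{s_1}_{p_1,1}}$; the constraint $s_1\le d/p_1$ keeps $r_1\ge p_1$ (so Bernstein runs in the right direction), the ordering $s_1\ge s_2$ makes the $T_v u$ contribution land at the $s_2$-level, and $s_1+s_2>0$ supplies the geometric gain needed to sum the remainder tail as above.

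For the third (negative-index) estimate the decomposition is identical, but the bookkeeping is the most delicate because the output sits in $\dot B^{-s}_{q,\infty}$ while $v$ only belongs to $\dot B^{-s}_{p_2,\infty}$. The two paraproducts are handled by enlarging a Lebesgue exponent through Bernstein, the bounds $s\le d/p_1$ and $s\le d/p_2$ keeping every H\"older exponent admissible, after which the outer $\ell^\infty$ in $j$ passes through transparently. The main obstacle I foresee is the remainder $R(u,v)$: because $u$ carries regularity $+s$ and $v$ carries $-s$, the exponents nominally cancel, so on each block $\dot\Delta_j u\,\widetilde{\dot\Delta}_j v$ (supported in a ball of radius $\sim 2^j$) one first uses H\"older in $L^{p^*}$ with $1/p^*=1/p_1+1/p_2$ and then Bernstein on the annular output at frequency $2^k$ to produce the factor $2^{kd(1/p^*-1/q)}=2^{ks}$. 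This factor precisely cancels the weight $2^{-ks}$ of the target $\dot B^{-s}_{q,\infty}$-norm, reducing matters to $\sum_{j\ge k-N_0} 2^{js}\|\dot\Delta_j u\|_{L^{p_1}}\lesssim \|u\|_{\dot B^s_{p_1,1}}$ multiplied by $\|v\|_{\dot B^{-s}_{p_2,\infty}}$; the lower bound $s\ge d/p_1+d/p_2-d$ is exactly what guarantees $q\ge 1$, ensuring admissibility of the Lebesgue exponent used in that Bernstein step.
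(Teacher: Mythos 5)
Your argument is correct, and it is the standard Bony-decomposition proof of these classical product laws: the paper itself offers no proof of Proposition \ref{prop3.5} (it is quoted from \cite{BCD,DX}), and your splitting into $T_uv+T_vu+R(u,v)$, with Bernstein/H\"older trading regularity for integrability and with each hypothesis ($s>0$ for the remainder tail, $s_1\le d/p_1$ and $s_1\ge s_2$ for the paraproducts, $s\ge d/p_1+d/p_2-d$ for admissibility of $q$) deployed exactly where it is needed, is precisely the argument in those references. It is also the same machinery the authors use to prove the companion non-standard estimate, Proposition \ref{prop5.1}, so nothing further is required.
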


System \eqref{R-E11}  also involves compositions of functions (through $I(a)$, $k(a)$, $\tilde{\lambda}(a)$ and $\tilde{\mu}(a)$) and they
are bounded according to the following conclusion (see \cite{BCD,DX}).
\begin{prop}\label{prop3.6}
Let $F:\R\rightarrow\R$ be smooth with $F(0)=0$.
For  all  $1\leq p,r\leq\infty$ and $s>0$, it holds that
$F(u)\in \dot {B}^{s}_{p,r}\cap L^{\infty}$ for $u\in \dot {B}^{s}_{p,r}\cap L^{\infty}$,  and
$$\left\|F(u)\right\|_{\dot B^{s}_{p,r}}\leq C\left\|u\right\|_{\dot B^{s}_{p,r}}$$
with $C$ depending only on $\left\|u\right\|_{L^{\infty}}$, $F'$ \emph{(}and higher derivatives\emph{)}, $s$, $p$ and $d$.

In the case $s>-\min\left(\frac {d}{p},\frac {d}{p'}\right)$ then $u\in\dot {B}^{s}_{p,r}\cap\dot {B}^{\frac {d}{p}}_{p,1}$
implies that $F(u)\in \dot {B}^{s}_{p,r}\cap\dot {B}^{\frac {d}{p}}_{p,1}$, and
\begin{equation*}
\left\|F(u)\right\|_{\dot B^{s}_{p,r}}\leq C(1+\|u\|_{\dot {B}^{\frac {d}{p}}_{p,1}})\|u\|_{\dot {B}^{s}_{p,r}}.
\end{equation*}
\end{prop}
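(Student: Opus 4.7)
\medskip

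\noindent\textbf{Proof plan for Proposition \ref{prop3.6}.}
The plan is to treat the two assertions separately, since the case $s>0$ is purely nonlinear but allows a direct telescoping argument, whereas the case of possibly negative $s$ requires Bony's paralinearization.

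For the first part ($s>0$), I would start from the telescoping identity
\begin{equation*}
F(u)=\sum_{j\in\Z}\bigl(F(\dot S_{j+1}u)-F(\dot S_{j}u)\bigr)=\sum_{j\in\Z} m_{j}\,\ddj u,\qquad m_{j}\triangleq\int_{0}^{1}F'\bigl(\dot S_{j}u+\tau\ddj u\bigr)\,d\tau,
\end{equation*}
which is meaningful in $S'_{0}$ thanks to $F(0)=0$ and $u\in L^\infty$. The function $m_{j}$ is uniformly bounded in $L^\infty$ by a constant depending only on $\|F'\|_{L^\infty(I)}$ with $I=[-\|u\|_{L^\infty},\|u\|_{L^\infty}]$. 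To recover the norm $\|F(u)\|_{\dot B^{s}_{p,r}}$ one applies $\dk$ to the series and splits into low-high and high-low interactions according to whether $j\leq k$ or $j>k$. In the first range one uses $\|\dk(m_{j}\ddj u)\|_{L^{p}}\lesssim \|m_{j}\|_{L^\infty}\|\ddj u\|_{L^{p}}$ together with Young's inequality on $\ell^{r}(\Z)$ after multiplication by $2^{ks}$; in the second range a Bernstein-type gain in $2^{k-j}$ (from the spectral support of $\ddj u$) is used to absorb the shift and again close by Young's inequality on $\ell^{r}$. This is where $s>0$ is essential, as it makes the geometric series in $2^{(k-j)s}$ summable. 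Collecting the bounds yields the desired inequality with constant depending only on $\|u\|_{L^\infty}$, $F'$, $s$, $p$, $d$.

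For the second part ($s>-\min(d/p,d/p')$), the telescoping identity above is not summable if $s\leq 0$, so I would instead invoke Bony's decomposition and write the paralinearization
\begin{equation*}
F(u)=T_{F'(u)}u+R_{F}(u),\qquad R_{F}(u)\triangleq F(u)-T_{F'(u)}u.
\end{equation*}
The paraproduct term $T_{F'(u)}u$ is controlled by the classical continuity of $T$: one has $\|T_{g}u\|_{\dot B^{s}_{p,r}}\lesssim \|g\|_{L^\infty}\|u\|_{\dot B^{s}_{p,r}}$, and $\|F'(u)\|_{L^\infty}\lesssim 1+\|u\|_{L^\infty}$ by a scalar chain rule; together with $\dot B^{d/p}_{p,1}\hookrightarrow L^\infty$ from Proposition \ref{prop3.4}, this gives the factor $(1+\|u\|_{\dot B^{d/p}_{p,1}})\|u\|_{\dot B^{s}_{p,r}}$. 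The remainder $R_{F}(u)$ is the content of Meyer's paralinearization theorem: it lies in a space with regularity $2\,d/p$ (roughly twice that of $u$ when $u\in\dot B^{d/p}_{p,1}$), which comfortably contains $\dot B^{s}_{p,r}$ under the hypothesis $s>-\min(d/p,d/p')$; the resulting bound is again of the form $C(1+\|u\|_{\dot B^{d/p}_{p,1}})\|u\|_{\dot B^{s}_{p,r}}$.

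The main obstacle is precisely the estimate for $R_{F}(u)$ in the negative regularity regime. One must expand $R_{F}(u)$ using the second order remainder in Taylor's formula and then use Bony's full decomposition (paraproduct plus remainder) twice; the condition $s>-\min(d/p,d/p')$ is what ensures that the various remainder terms fall in a Besov space containing $\dot B^{s}_{p,r}$ without producing a non-summable series. Once the paralinearization estimate is in hand, addition with the paraproduct estimate concludes the proof. Since both assertions are in fact classical and given in \cite{BCD,DX}, the work amounts to assembling these two building blocks rather than proving a genuinely new estimate.
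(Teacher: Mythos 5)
The paper does not prove Proposition \ref{prop3.6} at all: it is quoted as a known result with a pointer to \cite{BCD,DX}, and your outline is precisely the classical argument given there (first linearization $F(u)=\sum_j m_j\dot\Delta_j u$ with the high-low/low-high splitting for $s>0$, and Bony paralinearization $F(u)=T_{F'(u)}u+R_F(u)$ with Meyer's remainder estimate for the second assertion). Your plan is correct and consistent with the cited proofs; the only compressed point is that in the range $j<k$ the gain comes from a reverse Bernstein estimate on $\dot\Delta_k$ applied to $N>s$ derivatives of $m_j\dot\Delta_j u$ (each costing $2^j$), not merely from the spectral support of $\dot\Delta_j u$.
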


The following  commutator estimates (see \cite{DX}) have been used  in the high-frequency estimate of the proof of Theorem \ref{thm1.2}.
\begin{prop}\label{prop3.7}
Let $1\leq p,\,p_{1}\leq\infty$ and
\begin{equation*}
-\min\left(\frac{d}{p_{1}},\frac{d}{p'}\right)<s\leq 1+\min\left(\frac {d}{p},\frac{d}{p_{1}}\right) \ \ \ \mbox{with}\ \ \ \frac{1}{p}+\frac{1}{p'}=1.
\end{equation*}
There exists a constant $C>0$ depending only on $s$ such that for all $j\in\Z$ and $\ell\in\left\{1,\cdots,d\right\}$, it holds that
\begin{equation*}
\|[v\cdot\nabla,\d_\ell\dot{\Delta}_{j}]a\|_{L^{p}}\leq
Cc_{j}2^{-j\left(s-1\right)}\left\|\nabla v\right\|_{\dot{B}^{\frac{d}{p_{1}}}_{p_{1},1}}\left\|\nabla a\right\|_{\dot{B}^{s-1}_{p,1}},
\end{equation*}
where the commutator
$\left[\cdot,\cdot\right]$ is defined by $\left[f,g\right]=fg-gf$ and $(c_{j})_{j\in\Z}$ denotes
a sequence such that $\left\|(c_{j})\right\|_{\ell^{1}}\leq 1$.
\end{prop}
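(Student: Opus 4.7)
The strategy is Bony's paraproduct decomposition applied to $v\cdot\nabla a = T_{v^k}\partial_k a + T_{\partial_k a}v^k + R(v^k,\partial_k a)$; applying the same decomposition to $v\cdot\nabla(\partial_\ell\dot{\Delta}_j a)$ and using that $\partial_\ell\dot{\Delta}_j$ commutes with $\partial_k$, one arrives at
\begin{equation*}
[v\cdot\nabla,\partial_\ell\dot{\Delta}_j]a = \mathcal{I}_j^1 + \mathcal{I}_j^2 + \mathcal{I}_j^3,
\end{equation*}
where $\mathcal{I}_j^1 = [T_{v^k},\partial_\ell\dot{\Delta}_j]\partial_k a$ is the principal paraproduct commutator, $\mathcal{I}_j^2 = T_{\partial_k\partial_\ell\dot{\Delta}_j a}v^k - \partial_\ell\dot{\Delta}_j T_{\partial_k a}v^k$ is a ``$T$-difference'', and $\mathcal{I}_j^3 = R(v^k,\partial_k\partial_\ell\dot{\Delta}_j a) - \partial_\ell\dot{\Delta}_j R(v^k,\partial_k a)$ is an ``$R$-difference''. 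It then suffices to bound each $\mathcal{I}_j^i$ by the claimed right-hand side with a common $\ell^1$-summable sequence $(c_j)$.

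For $\mathcal{I}_j^1$, I would write $\mathcal{I}_j^1 = \sum_{|j'-j|\leq N_0}[S_{j'-1}v^k,\partial_\ell\dot{\Delta}_j]\dot{\Delta}_{j'}\partial_k a$, the frequency truncation coming from the Fourier supports of $\dot{\Delta}_{j'}$ and $\dot{\Delta}_j$. The convolution kernel of $[S_{j'-1}v^k,\partial_\ell\dot{\Delta}_j]$ is
\begin{equation*}
2^{j(d+1)}(\partial_\ell h)(2^j(x-y))\bigl(S_{j'-1}v^k(y) - S_{j'-1}v^k(x)\bigr),
\end{equation*}
and a first-order Taylor expansion of $S_{j'-1}v^k$ together with the change of variables $z = 2^j(y-x)$ produces $\|[S_{j'-1}v^k,\partial_\ell\dot{\Delta}_j]\dot{\Delta}_{j'}\partial_k a\|_{L^p} \lesssim \|\nabla S_{j'-1}v^k\|_{L^\infty}\|\dot{\Delta}_{j'}\nabla a\|_{L^p}$. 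The embedding $\dot{B}^{d/p_1}_{p_1,1}\hookrightarrow L^\infty$ from Proposition \ref{prop3.4} absorbs $\|\nabla S_{j'-1}v^k\|_{L^\infty}$ into $\|\nabla v\|_{\dot{B}^{d/p_1}_{p_1,1}}$, and summing over $|j'-j|\leq N_0$ while factoring $2^{-j(s-1)}$ yields exactly $c_j 2^{-j(s-1)}\|\nabla v\|_{\dot{B}^{d/p_1}_{p_1,1}}\|\nabla a\|_{\dot{B}^{s-1}_{p,1}}$ with $(c_j)\in\ell^1(\Z)$ (as a convolution of the normalized dyadic coefficients of $\nabla a$ in $\dot{B}^{s-1}_{p,1}$ against a finite-support weight).

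The residual pieces $\mathcal{I}_j^2$ and $\mathcal{I}_j^3$ are handled by standard $T$- and $R$-paraproduct continuity, without further cancellation: after Fourier-support analysis the relevant frequency sums are truncated near $2^j$, and product estimates in the spirit of Proposition \ref{prop3.5} give the bound termwise, $\ell^1$-summability of the arising sequences again following from convolution against a finite-support weight. It is precisely here that the range of $s$ is sharp: the upper bound $s\leq 1+\min(d/p,d/p_1)$ keeps the $T$-terms in the correct Besov space without forcing extra regularity on $a$, while the positivity required for the remainder $R$ to be continuous translates into $s>-\min(d/p_1,d/p')$, the two cases corresponding to whether the extra derivative is distributed onto $v$ via Bernstein at the exponent $p_1$ or onto $a$ via H\"older with $p'$. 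The main technical obstacle is the book-keeping --- matching the factor $c_j 2^{-j(s-1)}$ across all three pieces with a \emph{single} summable sequence $(c_j)$ uniformly across the full stated range of $s$, rather than any individual estimate in isolation.
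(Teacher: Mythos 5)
The paper does not prove Proposition \ref{prop3.7}; it is quoted from \cite{DX}, where it is established by exactly the argument you outline: Bony's decomposition of the commutator into a principal paraproduct commutator plus $T$- and $R$-residuals, the first-order kernel estimate $\|[S_{j'-1}v^k,\d_\ell\dj]w\|_{L^p}\lesssim\|\nabla S_{j'-1}v\|_{L^\infty}\|w\|_{L^p}$ for the principal piece, and standard paraproduct/remainder continuity for the rest (cf.\ Lemma 2.99--2.100 of \cite{BCD}). Your treatment of $\mathcal{I}_j^1$ is complete and correct, your attribution of the upper bound on $s$ to the $T$-terms and of the lower bound to the remainder is accurate, and the remaining bookkeeping for $\mathcal{I}_j^2$ and $\mathcal{I}_j^3$, though only sketched, is the standard one and goes through as you describe.
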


Finally, we end this section with those inequalities of Gagliardo-Nirenberg type,
which can be found in the work of Sohinger and Strain \cite{SS} (see also Chap. 2 of \cite{BCD} or \cite{XK2}).
\begin{prop}\label{prop3.8}
The following interpolation inequalities hold true:
\begin{eqnarray*}
\|\Lambda^{\ell}f\|_{L^{r}}\lesssim \|\Lambda^{m}f\|^{1-\theta}_{L^{q}}\|\Lambda^{k}f\|^{\theta}_{L^{q}},
\end{eqnarray*}
whenever  $0\leq\theta\leq1$,  $1\leq q\leq r\leq\infty$ and $$\ell+d\Big(\frac{1}{q}-\frac{1}{r}\Big)=m(1-\theta)+k\theta.$$
\end{prop}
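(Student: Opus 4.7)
The plan is to prove the inequality by a Littlewood--Paley decomposition combined with Bernstein's inequality and an optimized split between low and high frequencies. Without loss of generality assume $m<k$ and $\theta\in(0,1)$: the endpoint cases $\theta\in\{0,1\}$ reduce to a single Bernstein bound, and swapping $m\leftrightarrow k$ with $\theta\leftrightarrow 1-\theta$ handles $m>k$. Introduce the shorthand $\alpha\triangleq \ell+d\bigl(\tfrac1q-\tfrac1r\bigr)$; the hypothesis reads $\alpha=(1-\theta)m+\theta k$, so $m<\alpha<k$. A quick scaling check ($f(x)\mapsto f(\lambda x)$) shows that both sides transform by $\lambda^{\alpha-d/q}$, which is the structural reason for the identity and motivates the choice of the split below.

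First I would apply Bernstein's inequality (\eqref{R-E18}--\eqref{R-E19}) to each dyadic block. Because $\dot\Delta_j f$ is spectrally supported in an annulus of size $\sim 2^j$,
$$\|\Lambda^{\ell}\dot\Delta_j f\|_{L^r}\lesssim 2^{j\alpha}\|\dot\Delta_j f\|_{L^q},\qquad
\|\dot\Delta_j f\|_{L^q}\lesssim 2^{-jm}\|\Lambda^m f\|_{L^q},\qquad
\|\dot\Delta_j f\|_{L^q}\lesssim 2^{-jk}\|\Lambda^k f\|_{L^q}.$$
For an integer $N$ to be chosen later, split $f=\dot S_N f+(\mathrm{Id}-\dot S_N)f$, estimate the low-frequency part using $\|\Lambda^m f\|_{L^q}$ and the high-frequency part using $\|\Lambda^k f\|_{L^q}$, and sum the dyadic bounds. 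The geometric series converge precisely because $\alpha-m>0$ and $\alpha-k<0$, yielding
$$\|\Lambda^{\ell}f\|_{L^r}\lesssim 2^{N(\alpha-m)}\|\Lambda^m f\|_{L^q}+2^{N(\alpha-k)}\|\Lambda^k f\|_{L^q}.$$

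Finally I would optimize in $N$. Choosing $N$ closest to the real number $(k-m)^{-1}\log_2\!\bigl(\|\Lambda^k f\|_{L^q}/\|\Lambda^m f\|_{L^q}\bigr)$ balances the two terms and, using $(\alpha-m)/(k-m)=\theta$, produces
$$\|\Lambda^{\ell}f\|_{L^r}\lesssim \|\Lambda^m f\|_{L^q}^{1-\theta}\|\Lambda^k f\|_{L^q}^{\theta},$$
as desired. I do not anticipate a substantial obstacle; the only delicate point is that the optimal $N$ is generally not an integer, but rounding to the nearest integer costs at most a multiplicative constant depending only on $k-m$ and $\theta$. The degenerate configurations $\|\Lambda^m f\|_{L^q}=0$ or $\|\Lambda^k f\|_{L^q}=0$ force $f\equiv 0$ in $S'_0(\R^d)$, and the argument is uniform in the admissible range of $q,r$ (including $r=\infty$ or $q=1$) since the Bernstein estimates remain valid there.
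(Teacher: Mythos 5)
The paper offers no proof of Proposition \ref{prop3.8}: it is quoted from Sohinger--Strain \cite{SS} (see also \cite{BCD,XK2}), so there is no argument of the authors to compare yours against. Your Littlewood--Paley proof is the standard one and is correct and complete in the regime that actually matters, namely $0<\theta<1$ with $m\neq k$: the block estimates $\|\Lambda^{\ell}\dot\Delta_jf\|_{L^r}\lesssim 2^{j\alpha}\|\dot\Delta_jf\|_{L^q}$ and $\|\dot\Delta_jf\|_{L^q}\lesssim 2^{-jm}\|\Lambda^mf\|_{L^q}$ follow from \eqref{R-E18}--\eqref{R-E19} together with the uniform $L^q$-boundedness of $\dot\Delta_j$, the two geometric series converge precisely because $m<\alpha<k$, and the optimization in $N$ (rounding to an integer) is routine. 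This is also the only regime the paper uses: in the proof of Corollary \ref{cor1.1} the parameter $\varepsilon$ is chosen so that $\theta_2\in(0,1)$, and likewise $\theta_0,\theta_1\in(0,1)$ elsewhere.

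One remark in your proposal is wrong, though harmless for the application: the endpoints $\theta\in\{0,1\}$ do \emph{not} ``reduce to a single Bernstein bound''. Bernstein requires spectral localization, whereas for a general $f$ the case $\theta=0$ asserts $\|\Lambda^{\ell}f\|_{L^r}\lesssim\|\Lambda^{m}f\|_{L^q}$ with $m-\ell=d\bigl(\tfrac1q-\tfrac1r\bigr)$, i.e.\ a homogeneous Sobolev (Hardy--Littlewood--Sobolev) embedding. In your scheme the low-frequency series then diverges (since $\alpha=m$), and indeed the inequality is genuinely false at $q=1$ or $r=\infty$ when $q<r$ (it is trivial when $q=r$). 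The same degeneracy occurs if $m=k$. So either restrict the statement to $0<\theta<1$ and $m\neq k$ --- which suffices for everything in the paper --- or supply a separate argument for the endpoints rather than dismissing them.
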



\section{Low-frequency and high-frequency analysis}\setcounter{equation}{0}
We are going to establish a Lyapunov-type inequality for energy norms by using a pure energy argument.
For clarity, the proof is divided into several steps. In this section, we derive first the low-frequency and high-frequency estimates.

\subsection{Low-frequency estimates}
Let $\Lambda^{s}z\triangleq\mathcal{F}^{-1}\left(\left|\xi\right|^{s}\mathcal{F}z\right)$ ($s\in \mathbb{R}$). Denote by $\omega=\Lambda^{-1}\curl u$ the incompressible part of $u$ and by $v=\Lambda^{-1}\div u$ the compressible part of $u$. Therefore, we see that $\omega$ satisfies the heat equation
\begin{equation}\label{R-E20}
\partial_{t}\omega-\mu_{\infty}\Delta\omega=G_1, \ \ \ \ G_1\triangleq\Lambda^{-1}\curl g.
\end{equation}
It is not difficult to deduce that
\begin{eqnarray}\label{R-E200}
\frac{d}{dt}\|\omega^{\ell}\|_{\dot{B}^{\frac{d}{p}-1}_{p,1}}+\|\omega^{\ell}\|_{\dot{B}^{\frac{d}{p}+1}_{p,1}}\lesssim \|g^{\ell}\|_{\dot{B}^{\frac{d}{p}-1}_{p,1}}
\end{eqnarray}
for $1\leq p\leq \infty$ and  $t\geq0$, where $z^{\ell}\triangleq S_{k_0}z$.  On the other hand, one can study the coupled system
for $(a,v)$:
\begin{equation}\label{R-E21}
\left\{\begin{array}{l}\d_ta+\Lambda v=f,\\[1ex]
\d_tv-\Delta v-\Lambda a=h,\ \ \ \ h\triangleq\Lambda^{-1}\div g.
\end{array}\right.
\end{equation}
\begin{lem}\label{lem4.1}
Let $k_0$ be some integer. Then it holds that
\begin{equation}\label{R-E22}
\frac{d}{dt}\|(a,u)^{\ell}\|_{\dot{B}^{\frac{d}{2}-1}_{2,1}}+\|(a,u)^{\ell}\|_{\dot{B}^{\frac{d}{2}+1}_{2,1}}\lesssim \|(f,g)^{\ell}\|_{\dot{B}^{\frac{d}{2}-1}_{2,1}},
 \end{equation}
for all $t\geq0$.
\end{lem}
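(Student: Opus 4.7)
The approach is based on the Helmholtz-type decomposition already introduced in the excerpt: writing $\omega=\Lambda^{-1}\curl u$ and $v=\Lambda^{-1}\div u$, one recovers $u$ from $(v,\omega)$ via zero-order Fourier multipliers, and conversely $h=\Lambda^{-1}\div g$ and $G_{1}=\Lambda^{-1}\curl g$ are controlled by $g$ in every $\dot B^{s}_{2,1}$. The heat estimate \eqref{R-E200} already handles $\omega^{\ell}$, so everything reduces to establishing the analogue of \eqref{R-E22} for the coupled hyperbolic--parabolic system \eqref{R-E21} satisfied by $(a,v)$, with source $(f,h)$.

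\medskip

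\noindent At each low dyadic frequency $j\leq k_{0}$, set $(a_{j},v_{j},f_{j},h_{j})=\ddj(a,v,f,h)$. The plain $L^{2}$ energy identity, using $(\Lambda v_{j},a_{j})=(\Lambda a_{j},v_{j})$ to cancel the off-diagonal terms, reads
\begin{equation*}
\tfrac12\tfrac{d}{dt}\bigl(\|a_{j}\|_{L^{2}}^{2}+\|v_{j}\|_{L^{2}}^{2}\bigr)+\|\Lambda v_{j}\|_{L^{2}}^{2}=(f_{j},a_{j})+(h_{j},v_{j}),
\end{equation*}
which provides parabolic damping for $v_{j}$ but nothing for $a_{j}$. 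The standard fix is to introduce the cross functional $\cI_{j}\triangleq -(\Lambda a_{j},v_{j})$, whose time derivative, computed directly from \eqref{R-E21}, satisfies
\begin{equation*}
\tfrac{d}{dt}\cI_{j}+\|\Lambda a_{j}\|_{L^{2}}^{2}-\|\Lambda v_{j}\|_{L^{2}}^{2}=(\Lambda a_{j},\Lambda^{2}v_{j})-(\Lambda f_{j},v_{j})-(\Lambda a_{j},h_{j}).
\end{equation*}

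\medskip

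\noindent Combining these two identities with a small weight $\eta>0$ gives the Lyapunov quantity $\cL_{j}^{2}\triangleq \|a_{j}\|_{L^{2}}^{2}+\|v_{j}\|_{L^{2}}^{2}+2\eta\cI_{j}$, for which $(1-\eta)\|\Lambda v_{j}\|_{L^{2}}^{2}+\eta\|\Lambda a_{j}\|_{L^{2}}^{2}$ appears on the dissipation side. At low frequencies, Bernstein's inequality yields $\|\Lambda^{2}v_{j}\|_{L^{2}}\lesssim 2^{k_{0}}\|\Lambda v_{j}\|_{L^{2}}$ and $|\cI_{j}|\lesssim 2^{k_{0}}\|a_{j}\|_{L^{2}}\|v_{j}\|_{L^{2}}$, so choosing $\eta$ small enough (depending on the fixed threshold $k_{0}$) ensures both $\cL_{j}^{2}\approx\|a_{j}\|_{L^{2}}^{2}+\|v_{j}\|_{L^{2}}^{2}$ \emph{and} absorption of the mixed term $\eta(\Lambda a_{j},\Lambda^{2}v_{j})$ into the dissipation. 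Using $\|\Lambda w_{j}\|_{L^{2}}\gtrsim 2^{j}\|w_{j}\|_{L^{2}}$ for $w\in\{a,v\}$ then gives the frequency-localized inequality
\begin{equation*}
\tfrac{d}{dt}\cL_{j}+c\,2^{2j}\cL_{j}\lesssim \|f_{j}\|_{L^{2}}+\|h_{j}\|_{L^{2}}.
\end{equation*}
Multiplying by $2^{j(\frac d2-1)}$ and summing over $j\leq k_{0}$ delivers the Besov estimate for $(a,v)^{\ell}$; combining with \eqref{R-E200} for $\omega^{\ell}$ and transferring back to $u$ and $g$ via the zero-order multipliers produces \eqref{R-E22}.

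\medskip

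\noindent The main obstacle is the quantitative balance in the choice of $\eta$: it must be small enough both for $\cL_{j}^{2}$ to remain comparable to $\|a_{j}\|_{L^{2}}^{2}+\|v_{j}\|_{L^{2}}^{2}$ and for $\eta(\Lambda a_{j},\Lambda^{2}v_{j})$ to be absorbed by the dissipation. Both constraints worsen as $k_{0}$ grows, but since $k_{0}$ is a fixed low/high threshold rather than a parameter to be optimized, this is harmless; the implicit constants in \eqref{R-E22} simply depend on $k_{0}$.
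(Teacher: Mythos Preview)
Your proposal is correct and follows essentially the same strategy as the paper: Helmholtz decomposition, frequency localization, and a cross-term Lyapunov functional to recover damping for $a$. The only noteworthy difference is in the design of the Lyapunov functional: the paper augments the energy with $\|\Lambda a_{j}^{\ell}\|_{L^{2}}^{2}$ (via a third identity $\tfrac12\tfrac{d}{dt}\|\Lambda a_{j}^{\ell}\|_{L^{2}}^{2}=(\Delta v_{j}^{\ell}\mid\Lambda a_{j}^{\ell})+(f_{j}^{\ell}\mid\Lambda^{2}a_{j}^{\ell})$), which makes the troublesome mixed term $(\Lambda a_{j}^{\ell},\Lambda^{2}v_{j}^{\ell})$ cancel \emph{exactly} rather than having to be absorbed; this allows fixed numerical coefficients instead of a smallness parameter $\eta$ depending on $k_{0}$, but the end result and the implicit $k_{0}$-dependence of the constants are the same.
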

\begin{proof}
Set $z_k\triangleq \dot{\Delta}_{k}z$. Apply the operator $\dot{\Delta}_{k}S_{k_0}$ to (\ref{R-E21}). By using the standard energy argument, we arrive at the following three equalities:
\begin{equation}\label{R-E23}
\frac 12 \frac{d}{dt}(\|a_{k}^{\ell}\|^2_{L^2}+\|v_{k}^{\ell}\|^2_{L^2})+\|\Lambda v_{k}^{\ell}\|^2_{L^2}=(f_{k}^{\ell}|a_{k}^{\ell})+(h_{k}^{\ell}|v_{k}^{\ell}),
\end{equation}
\begin{equation}\label{R-E24}
-\frac 12\frac{d}{dt}(v_{k}^{\ell}|\Lambda a_{k}^{\ell})+\|\Lambda a_{k}^{\ell}\|^2_{L^2}=-(\Delta v_{k}^{\ell}|\Lambda a_{k}^{\ell})+\|\Lambda v_{k}^{\ell}\|^2_{L^2}-(h_{k}^{\ell}|\Lambda a_{k}^{\ell})-(v_{k}^{\ell}|\Lambda f_{k}^{\ell})
\end{equation}
and
\begin{equation}\label{R-E25}
\frac 12\frac{d}{dt}\|\Lambda a_{k}^{\ell}\|^2_{L^2}=(\Delta v_{k}^{\ell}|\Lambda a_{k}^{\ell})+(f_{k}^{\ell}|\Lambda^2 a_{k}^{\ell}),
\end{equation}
from which one can deduce that
\begin{multline}\label{R-E26}
\frac 12 \frac{d}{dt}\mathcal{L}^2_{k}(t)+\|(\Lambda a_{k}^{\ell}, \Lambda v_{k}^{\ell})\|^2_{L^2}\hfill \cr \hfill =2(f_{k}^{\ell}|a_{k}^{\ell})+2(h_{k}^{\ell}|v_{k}^{\ell})-(h_{k}^{\ell}|\Lambda a_{k}^{\ell})-(v_{k}^{\ell}|\Lambda f_{k}^{\ell})+(f_{k}^{\ell}|\Lambda^2 a_{k}^{\ell})
\end{multline}
with $\mathcal{L}^2_{k}\triangleq 2(\|a_{k}^{\ell}\|^2_{L^2}+\|v_{k}^{\ell}\|^2_{L^2})+\|\Lambda a_{k}^{\ell}\|^2_{L^2}-2(v_{k}^{\ell}|\Lambda a_{k}^{\ell})$. It follows from Young's inequality that
$\mathcal{L}^2_{k}\approx \|(a_{k}^{\ell}, \Lambda a_{k}^{\ell},v_{k}^{\ell})\|^2_{L^2}\approx \|(a_{k}^{\ell},v_{k}^{\ell})\|^2_{L^2}$, due to the low-frequency cut-off. Consequently, we get the following inequality
\begin{equation}\label{R-E27}
\frac 12 \frac{d}{dt}\mathcal{L}^2_{k}+2^{2k}\mathcal{L}^2_{k}\lesssim \|(f_{k}^{\ell},h_{k}^{\ell})\|_{L^2}\mathcal{L}_{k},
\end{equation} which leads to
\begin{equation} \label{R-E28}
\frac{d}{dt}\mathcal{L}_{k}+2^{2k}\mathcal{L}_{k}\lesssim\|(f_{k}^{\ell},h_{k}^{\ell})\|_{L^2}.
\end{equation}
Then, multiplying both sides by $2^{k(d/2-1)}$ and summing up on $k\in\mathbb{Z}$ gives
\begin{equation}\label{R-E288}
\frac{d}{dt}\|(a,v)^{\ell}\|_{\dot{B}^{\frac{d}{2}-1}_{2,1}}+\|(a,v)^{\ell}\|_{\dot{B}^{\frac{d}{2}+1}_{2,1}}\lesssim \|(f,g)^{\ell}\|_{\dot{B}^{\frac{d}{2}-1}_{2,1}},
 \end{equation}
Hence, \eqref{R-E22} is followed by \eqref{R-E200} and \eqref{R-E288} directly.
\end{proof}

\subsection{High-frequency estimates}
In the high-frequency regime, the problem is that the structure of $f$ would cause a loss of one derivative as there is no smoothing effect
for $a$. To overcome the difficulty, as in \cite{H}, we perform the energy argument
in terms of the \emph{effective velocity}
 \begin{equation}\label{R-E29}
 w\triangleq\nabla(-\Delta)^{-1}(a-\div u).\end{equation}
Note  that if \eqref{R-E11} is written in terms of $a,$ $w$ and of the divergence free part $\cP u$
 of $u,$ then, up to low order terms, $a$ satisfies a \emph{damped} transport equation, and
 both $w$ and $\cP u$ satisfy a constant heat equation. Thanks to this structure of the system, one can establish the high-frequency estimates.

 \begin{lem} \label{lem4.2}Let $k_{0}\in \mathbb{Z}$ be chosen suitably large. It holds that for all $t\geq0$,
 \begin{eqnarray}\label{R-E30}
 &&\frac{d}{dt}\|(\nabla a, u)\|_{\dot B^{\frac dp-1}_{p,1}}^h+(\|\nabla a\|_{\dot B^{\frac dp-1}_{p,1}}^h+\|u\|_{\dot B^{\frac dp+1}_{p,1}}^h)\nonumber\\ & \lesssim &\|f\|_{\dot B^{\frac dp-2}_{p,1}}^h+\|g\|_{\dot B^{\frac dp-1}_{p,1}}^h+\|\nabla u\|_{\dot B^{\frac dp}_{p,1}}\|a\|_{\dot B^{\frac dp}_{p,1}},
 \end{eqnarray}
where $$\|z\|^{h}_{\dot{B}^{s}_{2,1}}\triangleq \sum_{k\geq k_0-1}2^{ks}\|\dot{\Delta}_{k}z\|_{L^2}\quad \mbox{for} \ s\in \mathbb{R}.$$
 \end{lem}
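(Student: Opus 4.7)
The plan is to exploit the \emph{effective velocity} reformulation: set $w\triangleq\nabla(-\Delta)^{-1}(a-\div u)$ as in \eqref{R-E29} and use the Helmholtz decomposition $u=\cP u+\cQ u$ with $\cQ u\triangleq-\nabla(-\Delta)^{-1}\div u$. The key algebraic facts are $w=\nabla(-\Delta)^{-1}a+\cQ u$ and $\div u=\div w+a$. Under the normalization $\nu_\infty=1$, a direct Fourier-side computation from \eqref{R-E11} yields
\begin{equation*}
\partial_t w-\Delta w=\cQ u+\nabla(-\Delta)^{-1}f+\cQ g,\qquad \partial_t\cP u-\mu_\infty\Delta\cP u=\cP g,
\end{equation*}
while rewriting the continuity equation $\partial_t a+\div u=f=-\div(au)$ in \emph{transport form}, combined with $\div u=\div w+a$, gives
\begin{equation*}
\partial_t a+u\cdot\nabla a+a=-\div w-a\,\div u.
\end{equation*}
Thus $\cP u$ and $w$ satisfy heat equations, whereas $a$ enjoys \emph{exponential-type damping} with coefficient one, which furnishes the high-frequency dissipation for the compressible component.

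For the block estimates I would apply $\ddk$ to each equation with $k\geq k_0-1$. Testing the two heat equations against $|\ddk z|^{p-2}\ddk z$ and invoking Proposition~\ref{prop3.1} yields
\begin{equation*}
\tfrac{d}{dt}\|\ddk z\|_{L^p}+c_0\,2^{2k}\|\ddk z\|_{L^p}\lesssim\|\ddk\,(\text{source})\|_{L^p},\qquad z\in\{w,\cP u\};
\end{equation*}
summation with weights $2^{k(d/p-1)}$ then delivers the full parabolic gain on the left. For the $a$-equation I would apply $\partial_\ell \ddk$ (putting the commutator into the precise form required by Proposition~\ref{prop3.7}) and test with $|\partial_\ell\ddk a|^{p-2}\partial_\ell\ddk a$; the transport integration-by-parts contributes $\|\div u\|_{L^\infty}$, controlled by the embedding $\dot B^{d/p}_{p,1}\hookrightarrow L^\infty$ of Proposition~\ref{prop3.4}, while the right-hand side produces $\|\ddk\div w\|_{L^p}$, the product $\|\ddk(a\div u)\|_{L^p}$ (handled by Proposition~\ref{prop3.5} so as to yield $\|a\|_{\dot B^{d/p}_{p,1}}\|\nabla u\|_{\dot B^{d/p}_{p,1}}$), and the commutator (which by Proposition~\ref{prop3.7} produces the same product form). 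Weighting by $2^{k\,d/p}$ and summing furnishes the required bound on $\tfrac{d}{dt}\|\nabla a\|^h_{\dot B^{d/p-1}_{p,1}}+\|\nabla a\|^h_{\dot B^{d/p-1}_{p,1}}$.

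The final step is to assemble the three estimates and reconstruct $\|u\|^h$. The cross term $\cQ u$ on the right of the $w$-equation is split via $\cQ u=w-\nabla(-\Delta)^{-1}a$, so at high frequencies
\begin{equation*}
\|\cQ u\|^h_{\dot B^{d/p-1}_{p,1}}\lesssim\|w\|^h_{\dot B^{d/p-1}_{p,1}}+\|a\|^h_{\dot B^{d/p-2}_{p,1}}\lesssim 2^{-2k_0}\bigl(\|w\|^h_{\dot B^{d/p+1}_{p,1}}+\|\nabla a\|^h_{\dot B^{d/p-1}_{p,1}}\bigr),
\end{equation*}
which is absorbed into the left by taking $k_0$ sufficiently large. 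Symmetrically, $\|\div w\|^h_{\dot B^{d/p}_{p,1}}\sim\|w\|^h_{\dot B^{d/p+1}_{p,1}}$ arising on the right of the $a$-estimate is absorbed by taking a small multiple of the $a$-estimate plus the full $w$-estimate, so that the $w$-dissipation on the left stays strictly positive. Finally, $\|u\|^h_{\dot B^{d/p\pm1}_{p,1}}\lesssim\|\cP u\|^h_{\dot B^{d/p\pm1}_{p,1}}+\|w\|^h_{\dot B^{d/p\pm1}_{p,1}}+\|\nabla(-\Delta)^{-1}a\|^h_{\dot B^{d/p\pm1}_{p,1}}$, with the last piece subsumed in $\|\nabla a\|^h_{\dot B^{d/p-1}_{p,1}}$ that is already controlled on the left.

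The main technical obstacle lies in this absorption/reconstruction book-keeping: the effective velocity formulation decouples the linear principal part cleanly, but it introduces cross terms ($\cQ u$ in the $w$-equation, $\div w$ in the $a$-equation, $\nabla(-\Delta)^{-1}a$ in recovering $u$) whose sizes must be quantified precisely and absorbed by choosing $k_0$ large enough. Equally critical is the use of the \emph{transport form} of the $a$-equation in place of the naive linear reading $\partial_t a+a=f-\div w$ (which would require an estimate of $f=-\div(au)$ at regularity $\dot B^{d/p}_{p,1}$ and thus lose a derivative): only the transport form yields precisely the nonlinear product $\|\nabla u\|_{\dot B^{d/p}_{p,1}}\|a\|_{\dot B^{d/p}_{p,1}}$ that matches the target right-hand side.
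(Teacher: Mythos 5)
Your proposal is correct and follows essentially the same route as the paper: the effective velocity $w=\nabla(-\Delta)^{-1}(a-\div u)$ together with $\cP u$ reduces the parabolic part to heat equations handled by Proposition \ref{prop3.1}, while the density is treated via the damped transport equation $\partial_ta+\div(au)+a=-\div w$ with the commutator bounded by Proposition \ref{prop3.7}, and the cross terms are absorbed exactly as you describe by taking $k_0$ large and a small multiple of the $a$-estimate. The only cosmetic slip is the weight for the $a$-block, which should be $2^{k(d/p-1)}$ on $\|\nabla a_k\|_{L^p}$ (equivalently $2^{kd/p}$ on $\|a_k\|_{L^p}$), not $2^{kd/p}$ on $\|\nabla a_k\|_{L^p}$.
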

 \begin{proof}
Note that $\cP u$ satisfies
  $$
\d_t\cP u -\mu_\infty\Delta\cP u=\cP g.
$$
Applying $\ddk$ to the above equation yields for all $k\in\Z,$
$$
\d_t\cP u_k -\mu_\infty\Delta\cP u_k=\cP g_k\quad\hbox{with }\ u_k\triangleq \ddk u\ \hbox{ and }\
g_k\triangleq \ddk g.
$$
Then, multiplying each component of the above equation
by $|(\cP u_k)^i|^{p-2}(\cP u_k)^i$ and  integrating over $\R^d$ gives  for $i=1,\cdots,d,$
$$\frac 1p\frac d{dt}\|\cP u_k^i\|_{L^p}^p-\mu_\infty\int \Delta(\cP u_k)^i|(\cP u_k)^i|^{p-2}(\cP u_k)^i\,dx
=\int|(\cP u_k)^i|^{p-2}(\cP u_k)^i\cP g^i_k\,dx.$$
The key observation is that the second term of the l.h.s., although not spectrally localized,
may be bounded from below as if it were (see Proposition \ref{prop3.1}).
After summation  on $i=1,\cdots,d,$ we end up  (for some constant $c_p$ depending only $p$) with
\begin{equation}\label{R-E31}
\frac1p\frac d{dt}\|\cP u_k\|_{L^p}^p+c_p\mu_\infty2^{2k}\|\cP u_k\|_{L^p}^p
\leq \|\cP g_k\|_{L^p}\|\cP u_k\|_{L^p}^{p-1},
\end{equation}
which leads to
\begin{equation}\label{R-E32}
\frac d{dt}\|\cP u_k\|_{L^p}+c_p\mu_\infty2^{2k}\|\cP u_k\|_{L^p}
\leq \|\cP g_k\|_{L^p}.
\end{equation}
On the other hand, it is clear that $w$ fulfills
\begin{eqnarray}\label{R-E33}
\d_tw-\Delta w=\nabla(-\Delta)^{-1}(f-\div g) +w-(-\Delta)^{-1}\nabla a.
\end{eqnarray}
Hence, arguing exactly as for the proof of \eqref{R-E31} shows that for $w_k\triangleq\ddk w$:
\begin{multline}\label{R-E34}
\frac1p\frac d{dt}\|w_k\|_{L^p}^p+c_p2^{2k}\|w_k\|_{L^p}^p\\
\leq  \bigl(\|\nabla(-\Delta)^{-1}(f_k-\div g_k)\|_{L^p}
+\|w_k-(-\Delta)^{-1}\nabla a_k\|_{L^p}  \bigr)\|w_k\|_{L^p}^{p-1}.
\end{multline}
Similarly, we obtain
\begin{multline} \label{R-E35}
\frac d{dt}\|w_k\|_{L^p}+c_{p}2^{2k}\|w_k\|_{L^p}\\ \leq \|\nabla(-\Delta)^{-1}(f_{k}-\div g_{k})\|_{L^p}+\|w_{k}-(-\Delta)^{-1}\nabla a_{k}\|_{L^p}.
\end{multline}
In terms of $w,$ the function $a$ satisfies
the following \emph{damped} transport equation:
\begin{equation}\label{R-E36}
 \d_ta+\div(au)+a=-\div w.
 \end{equation}
Then, applying the operator $\d_i\dot{\Delta}_{k}$  to \eqref{R-E36}
and denoting $R_k^i\triangleq[u\cdot\nabla,\d_i\ddk]a$
yields
\begin{equation}\label{R-E37}
 \d_t\d_ia_k+u\cdot\nabla\d_ia_k+\d_ia_k=   -\d_i\ddk(a\div u)-\d_i\div w_k+R_k^i,\quad i=1,\cdots,d.\end{equation}
Multiplying  by $|\d_ia_k|^{p-2}\d_ia_k,$   integrating on $\R^d,$ and performing an integration by
parts in the second term of  \eqref{R-E37}, one can get
$$\displaylines{
\frac1p\frac{d}{dt}\|\d_ia_k\|_{L^p}^p+\|\d_ia_k\|_{L^p}^p=\frac1p\int\div u\:|\d_ia_k|^p\,dx
\hfill\cr\hfill+\int\bigl(R_k^i -\d_i\ddk(a\div u)-\d_i\div w_k)|\d_ia_k|^{p-2}\d_ia_k\,dx.}
$$
Summing up on $i=1,\cdots,d$ and applying H\"older  and Bernstein inequalities give
\begin{multline}\label{R-E38}
\frac1p\frac{d}{dt}\|\nabla a_k\|_{L^p}^p+\|\nabla a_k\|_{L^p}^p\leq\Bigl(\frac{1}{p}\|\mathrm{div}u\|_{L^\infty}\|\nabla a_k\|_{L^p}+\|\nabla\ddk(a\mathrm{div}u)\|_{L^p}\\+C2^{2k}\|w_k\|_{L^p}+\|R_k\|_{L^p}\Bigr)\|\nabla a_k\|_{L^p}^{p-1},
\end{multline}
which leads to
\begin{multline} \label{R-E39}
\frac d{dt}\|\nabla a_k\|_{L^p}+\|\nabla a_k\|_{L^p}\\ \leq\Big(\frac{1}{p}\|\div u\|_{L^\infty}\|\nabla a_k\|_{L^p}+
\|\nabla\dot{\Delta}_{k}(a\div u)\|_{L^p}+C2^{2k}\|w_{k}\|_{L^p}+\|R_{k}\|_{L^p}\Big).
\end{multline}
Furthermore, adding up (\ref{R-E39}) (multiplying by $\gamma c_p$ for some $\gamma>0$) to \eqref{R-E32} and \eqref{R-E35} yields
$$\displaylines{
\frac{d}{dt}\bigl(\|\cP u_k\|_{L^p}+\|w_k\|_{L^p}+\gamma c_p\|\nabla a_k\|_{L^p}\bigr)
+c_p2^{2k}\bigl(\mu_\infty\|\cP u_k\|_{L^p}+\|w_k\|_{L^p})\hfill\cr\hfill
+\gamma c_p\|\nabla a_k\|_{L^p}\leq \|\cP g_k\|_{L^p}+\|\nabla(-\Delta)^{-1}(f_k-\div g_k)\|_{L^p}\hfill\cr\hfill
+\gamma c_p\Bigl(\frac{1}{p}\|\mathrm{div}u\|_{L^\infty}\|\nabla a_k\|_{L^p}+\|\nabla\ddk(a\div u)\|_{L^p}+\|R_k\|_{L^p}\Bigr)\hfill\cr\hfill
+C\gamma c_p2^{2k}\|w_k\|_{L^p}
+\|w_k-(-\Delta)^{-1}\nabla a_k\|_{L^p}.}
$$
Noticing that $(-\Delta)^{-1}$ is a homogeneous Fourier multiplier of degree $-2,$ we have
$$\|(-\Delta)^{-1}\nabla a_k\|_{L^p}\lesssim 2^{-2k}\|\nabla a_{k}\|_{L^p}\lesssim 2^{-2k_0}\|\nabla a_{k}\|_{L^p}
\quad\hbox{for all }\  k\geq k_{0}-1.$$
Choosing $ k_{0}$ suitably large and $\gamma$ sufficiently small, we deduce that there exists a constant $c_0>0$ such that for all $k\geq k_{0}-1$,
$$\displaylines{
\frac{d}{dt}\bigl(\|\cP u_k\|_{L^p}+\|w_k\|_{L^p}+\|\nabla a_k\|_{L^p}\bigr)
+c_0\bigl(2^{2k}\|\cP u_k\|_{L^p}+2^{2k}\|w_k\|_{L^p}+\|\nabla a_k\|_{L^p})\hfill\cr\hfill
\lesssim  2^{-k}\|f_k\|_{L^p}+\|g_k\|_{L^p}\hfill\cr\hfill
+\Bigl(\|\mathrm{div}u\|_{L^\infty}\|\nabla a_k\|_{L^p}+\|\nabla\ddk(a\div u)\|_{L^p}+\|R_k\|_{L^p}\Bigr).}
$$
Since
\begin{eqnarray}\label{R-E40}
 u=w-\nabla(-\Delta)^{-1}a+\cP u,
\end{eqnarray}
it follows that
\begin{eqnarray} \label{R-E41}
&&\frac{d}{dt}\|(\nabla a_k,u_k)\|_{L^p}+c_0\|(\nabla a_k,2^{2k}u_k)\|_{L^p}\nonumber\\&\lesssim&\|(2^{-k}f_k,g_k)\|_{L^p}+\|\mathrm{div}u\|_{L^\infty}\|\nabla a_k\|_{L^p}+\|\nabla\ddk(a\div u)\|_{L^p}+\|R_k\|_{L^p}.
\end{eqnarray}
Hence, by multiplying both sides \eqref{R-E41} by $2^{k(\frac{d}{p}-1)}$ and summing up over $k\geq k_0-1$, we arrive at (\ref{R-E30}).
\end{proof}


\section{The evolution of negative Besov norm}\setcounter{equation}{0}
In this section, we establish the evolution of the negative Besov norms at low frequencies, which plays the key role in deriving the
Lyapunov-type inequality for energy norms. To this end, we need some non classical product estimates as follows.
\begin{prop}\label{prop5.1}
Let the real numbers $s_{1},$ $s_{2},$ $p_1$  and $p_2$ be such that
$$
s_1+s_2\geq0,\quad s_1\leq\frac d{p_1},\quad s_2<\min\Big(\frac d{p_1},\frac d{p_2}\Big)\quad \mbox{and}\quad \frac1{p_1}+\frac1{p_2}\leq1.
$$
Then it holds that
\begin{eqnarray}\label{R-E42}
\|fg\|_{\dot{B}^{s_{1}+s_{2}-\frac{d}{p_1}}_{p_2,\infty}}\lesssim \|f\|_{\dot{B}^{s_{1}}_{p_1,1}}\|g\|_{\dot{B}^{s_{2}}_{p_2,\infty}}.
\end{eqnarray}
\end{prop}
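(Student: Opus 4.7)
I would decompose $fg$ via Bony's formula $fg = T_f g + T_g f + R(f,g)$ and bound each term separately in $\dot{B}^{\alpha}_{p_2,\infty}$ with $\alpha \triangleq s_1+s_2-d/p_1$. Throughout $(c_j)_{j\in\Z}$ denotes a generic nonnegative sequence of $\ell^1$-norm at most $1$ arising from the $\ell^1$ summability built into $\dot{B}^{s_1}_{p_1,1}$.

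For the paraproduct $T_f g$, only indices $|j-k|\leq N$ contribute to $\dot{\Delta}_k(T_f g)$, and a Bernstein argument (combined with $\dot{B}^{d/p_1}_{p_1,1}\hookrightarrow L^\infty$ at the endpoint $s_1=d/p_1$) yields $\|S_{j-1} f\|_{L^\infty}\lesssim 2^{j(d/p_1-s_1)}\|f\|_{\dot{B}^{s_1}_{p_1,1}}$. Pairing this with $\|\dot{\Delta}_j g\|_{L^{p_2}}\leq 2^{-js_2}\|g\|_{\dot{B}^{s_2}_{p_2,\infty}}$ makes the scales balance, giving a uniform bound on $2^{k\alpha}\|\dot{\Delta}_k(T_f g)\|_{L^{p_2}}$.

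The paraproduct $T_g f$ is the delicate piece, since $g$ carries only $\ell^\infty$ summability and the relative sizes of $p_1,p_2$ dictate the H\"older split. When $p_1\leq p_2$, I would place $S_{j-1}g$ in $L^\infty$ (using $\|S_{j-1}g\|_{L^\infty}\lesssim 2^{j(d/p_2-s_2)}\|g\|_{\dot{B}^{s_2}_{p_2,\infty}}$, convergent at $-\infty$ thanks to $s_2<d/p_2$) and Bernstein-upgrade $\dot{\Delta}_j f$ from $L^{p_1}$ to $L^{p_2}$. When $p_1>p_2$, I would instead place $S_{j-1}g$ in $L^q$ with $1/q=1/p_2-1/p_1\geq 0$, giving $\|S_{j-1}g\|_{L^q}\lesssim 2^{j(d/p_1-s_2)}\|g\|_{\dot{B}^{s_2}_{p_2,\infty}}$ (convergent thanks to $s_2<d/p_1$), and keep $\dot{\Delta}_j f$ in $L^{p_1}$. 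Remarkably, both splittings yield the same scale balance $\|S_{j-1}g\,\dot{\Delta}_j f\|_{L^{p_2}}\lesssim 2^{-j\alpha} c_j\|f\|_{\dot{B}^{s_1}_{p_1,1}}\|g\|_{\dot{B}^{s_2}_{p_2,\infty}}$, which closes the estimate after the $|j-k|\leq N$ summation.

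Finally, for $R(f,g)$ the spectral support restricts the defining sum to $j\geq k-N$. Since $1/p_1+1/p_2\leq 1$, H\"older places the pointwise product in $L^r$ with $r=p_1p_2/(p_1+p_2)\leq p_2$, and a Bernstein upgrade to $L^{p_2}$ costs exactly a factor $2^{kd/p_1}$; the residual exponent in $j$ is $-(s_1+s_2)$, so the hypothesis $s_1+s_2\geq 0$ makes the tail $\sum_{j\geq k-N}2^{-(j-k)(s_1+s_2)}c_j$ bounded uniformly in $k$. The main obstacle, as explained above, is the $T_g f$ piece: the mismatch between the $\ell^1$ regularity of $f$ and the $\ell^\infty$ regularity of $g$ forces the case split on $p_1$ versus $p_2$ and uses \emph{both} strict inequalities in $s_2<\min(d/p_1,d/p_2)$; the estimates on $T_f g$ and $R(f,g)$ are essentially standard once one tracks the scales carefully.
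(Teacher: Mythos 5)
Your proposal is correct and follows essentially the same route as the paper: Bony's decomposition, with the identical H\"older/Bernstein case split on $p_1$ versus $p_2$ for the paraproduct having $g$ at low frequency, using the two strict inequalities $s_2<d/p_1$ and $s_2<d/p_2$ exactly where the paper does, and $s_1+s_2\geq 0$ together with the $\ell^1$ summability of $f$ for the remainder. The only cosmetic difference is that you treat $T_fg$ uniformly via $\|\dot S_{j-1}f\|_{L^\infty}\lesssim 2^{j(d/p_1-s_1)}\|f\|_{\dot B^{s_1}_{p_1,1}}$ (valid for all $s_1\leq d/p_1$), whereas the paper splits into the cases $s_1\geq 0$ and $s_1<0$ and invokes standard paraproduct mapping properties plus Besov embeddings; both arguments are sound.
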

\begin{proof}
Here, we need the so-called Bony decomposition  for the product of two tempered
distributions $f$ and $g$ (see for example \cite{BCD}):
\begin{equation}\label{R-E43}
fg=T_fg+R(f,g)+T_{g}f,
\end{equation}
where  the \emph{paraproduct} between  $f$ and $g$ is defined by
$$T_fg:=\sum_j \dot S_{j-1} f\ddj g\quad\hbox{with }\ \dot S_{j-1}\triangleq\chi(2^{-(j-1)}D),$$
and  the \emph{remainder} $R(f,g)$ is given by the series:
$$
R(f,g):=\sum_j \ddj f\, (\dot\Delta_{j-1}g+\ddj g+\dot\Delta_{j+1}g\bigr).
$$
It follows from the definition of $T_gf$ and the spectral cut-off property that
\begin{eqnarray}\label{R-E44}
\dot{\Delta}_{j}T_gf=\dot{\Delta}_{j}\Big(\sum_{j'} \dot S_{j'-1} g \dot{\Delta}_{j'} f\Big)=\sum_{|j-j'|\leq 4}\dot{\Delta}_{j}(\dot S_{j'-1} g\dot{\Delta}_{j'} f)
\end{eqnarray}
where $\dot S_{j'-1} g$ can be given by $\sum_{k\leq j'-2}\ddk g$. Hence, it follows from the H\"{o}lder inequality and Bernstein inequality \eqref{R-E18} that
\begin{multline}\label{R-E45}
\|\dot{\Delta}_{j}T_gf\|_{L^{p_2}} \lesssim \sum_{|j-j'|\leq 4}\|\dot S_{j'-1} g\dot{\Delta}_{j'}f\|_{L^{p_2}}\hfill \cr \hfill \lesssim
\left\{\begin{array}{l}
\sum_{|j-j'|\leq 4}\sum_{k\leq j'-2}\|\ddk g\|_{L^m}\|\dot{\Delta}_{j'}f\|_{L^{p_1}}, \quad \frac{1}{p_{2}}=\frac 1m+\frac{1}{p_{1}}\ ( p_{2}\leq p_{1});\\[1ex]
\sum_{|j-j'|\leq 4}\sum_{k\leq j'-2}\|\ddk g\|_{L^\infty}\|\dot{\Delta}_{j'}f\|_{L^{p_2}}, \quad p_{2}> p_{1}
\end{array}\right.
\hfill \cr \hfill \lesssim
\left\{\begin{array}{l}
\sum_{|j-j'|\leq 4}\sum_{k\leq j'-2}2^{k(\frac{d}{p_1}-s_{2})} 2^{ks_{2}}\|\ddk g\|_{L^{p_2}}\|\dot{\Delta}_{j'}f\|_{L^{p_1}}, \quad p_{2}\leq p_{1};\\[1ex]
\sum_{|j-j'|\leq 4}\sum_{k\leq j'-2}2^{j'd(\frac{1}{p_1}-\frac{1}{p_2})} 2^{k(\frac{d}{p_2}-s_{2})}2^{ks_{2}}\|\ddk g\|_{L^{p_2}}\|\dot{\Delta}_{j'}f\|_{L^{p_1}}, \quad p_{2}> p_{1}.
\end{array}\right.
\end{multline}
Furthermore, the right side of \eqref{R-E45} can be bounded by
$$\sum_{|j-j'|\leq 4} 2^{j'(\frac{d}{p_1}-s_{1}-s_{2})} 2^{j's_{1}}\|\dot{\Delta}_{j'}f\|_{L^{p_1}}\|g\|_{\dot{B}^{s_{2}}_{p_2,\infty}},
$$
provided $s_2<\min(d/{p_1}, d/{p_2})$. Consequently,
\begin{eqnarray}\label{R-E46}
\|T_gf\|_{\dot{B}^{s_{1}+s_{2}-\frac{d}{p_1}}_{p_2,\infty}}\lesssim \|f\|_{\dot{B}^{s_{1}}_{p_1,1}}\|g\|_{\dot{B}^{s_{2}}_{p_2,\infty}}.
\end{eqnarray}

\noindent \underline{Case 1: $s_{1}\geq0$}.
We use the fact that $T$ maps $L^{q_1}\times \dot{B}^{s_{2}}_{p_2,\infty}$
to $\dot{B}^{s_{2}}_{q,\infty}$, that is,
\begin{equation}\label{R-E48}
\|T_fg\|_{\dot{B}^{s_{2}}_{q,\infty}}\lesssim  \|f\|_{L^{q_1}}\|g\|_{\dot{B}^{s_{2}}_{p_2,\infty}} \quad \quad \mbox{with}\quad \frac 1q=\frac {1}{q_{1}}+\frac {1}{p_{2}},
\end{equation}
and the embedding $\dot{B}^{s_{1}}_{p_1,1}\hookrightarrow L^{q_{1}}$ with $\frac {1}{q_{1}}=\frac {1}{p_{1}}-\frac {s_{1}}{d}$ to get
\begin{equation}\label{R-E49}
\|T_fg\|_{\dot{B}^{s_{2}}_{q,\infty}} \lesssim \|f\|_{\dot{B}^{s_{1}}_{p_1,1}}\|g\|_{\dot{B}^{s_{2}}_{p_2,\infty}}.
\end{equation}
Furthermore, we use the embedding $\dot{B}^{s_{2}}_{q,\infty}\hookrightarrow\dot{B}^{s_{1}+s_{2}-\frac{d}{p_1}}_{p_2,\infty}$ (if $s_{1}\leq \frac{d}{p_{1}}$ ) and arrive at
\begin{equation}\label{R-E50}
\|T_fg\|_{\dot{B}^{s_{1}+s_{2}-\frac{d}{p_1}}_{p_2,\infty}} \lesssim \|f\|_{\dot{B}^{s_{1}}_{p_1,1}}\|g\|_{\dot{B}^{s_{2}}_{p_2,\infty}}.
\end{equation}
\noindent \underline{Case 2: $s_{1}<0$}. In this case, the fact that $T$ maps $\dot{B}^{s_{2}}_{p_{2},\infty}\times \dot{B}^{s_{1}}_{p_{1},\infty}$
to $\dot{B}^{s_{1}+s_{2}}_{\tilde{p},\infty}$ enables us to obtain
\begin{equation}\label{R-E51}
\|T_fg\|_{\dot{B}^{s_{1}+s_{2}}_{\tilde{p},\infty}}\lesssim\|f\|_{\dot{B}^{s_{1}}_{p_{1},1}}\|g\|_{\dot{B}^{s_{2}}_{p_{2},\infty}}
\end{equation}
with $1/\tilde{p}=1/p_{1}+1/p_{2}$. Noticing that the embedding $\dot{B}^{s_{1}+s_{2}}_{\tilde{p},\infty}\hookrightarrow\dot{B}^{s_{1}+s_{2}-\frac{d}{p_1}}_{p_2,\infty}$, we arrive at \begin{equation}\label{R-E52}
\|T_fg\|_{\dot{B}^{s_{1}+s_{2}-\frac{d}{p_1}}_{p_2,\infty}}\lesssim \|f\|_{\dot{B}^{s_{1}}_{p_{1},1}}\|g\|_{\dot{B}^{s_{2}}_{p_{2},\infty}}.
\end{equation}

To bound the term $R(f,g),$ one can use that the remainder operator $R$ maps
$\dot B^{s_1}_{p_1,1}\times\dot B^{s_2}_{p_2,\infty}$ to $\dot B^{s_1+s_2}_{\tilde{p},\infty}$
with $1/\tilde{p}=1/p_{1}+1/p_{2}$, where $s_1+s_2\geq 0$ and $\frac1{p_1}+\frac1{p_2}\leq 1$. Then
the embedding $\dot B^{s_1+s_2}_{\tilde{p},\infty}\hookrightarrow\dot{B}^{s_{1}+s_{2}-\frac{d}{p_1}}_{p_2,\infty}$
yields the desired inequality.

Therefore, the proof of Proposition \ref{prop5.1} is finished.
\end{proof}

\begin{cor}\label{cor5.1}
Let $1-\frac d2<\sigma_{1}\leq \sigma_{0}$ and $p$ satisfy \eqref{R-E7}. The following two inequalities hold true:
\begin{equation}\label{R-E53}
\|fg\|_{\dot{B}^{-\sigma_{1}}_{2,\infty}}\lesssim \|f\|_{\dot{B}^{\frac dp}_{p,1}}\|g\|_{\dot{B}^{-\sigma_{1}}_{2,\infty}},
\end{equation}
and
\begin{equation}\label{R-E54}
\|fg\|_{\dot{B}^{\frac dp-\frac d2-\sigma_{1}}_{2,\infty}}\lesssim \|f\|_{\dot{B}^{\frac dp-1}_{p,1}}\|g\|_{\dot{B}^{\frac dp-\frac d2-\sigma_{1}+1}_{2,\infty}}.
\end{equation}
\end{cor}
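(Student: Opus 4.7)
The plan is to deduce both inequalities as direct specializations of Proposition \ref{prop5.1}, with the only real work being the verification of the four conditions on $(s_1,s_2,p_1,p_2)$ using the constraints $\sigma_1\in(1-d/2,\sigma_0]$ and $p\in[2,\min(4,d^*)]$ from \eqref{R-E7}.

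For \eqref{R-E53} I would apply Proposition \ref{prop5.1} with the choice $p_1=p$, $p_2=2$, $s_1=d/p$, $s_2=-\sigma_1$, which produces target regularity $s_1+s_2-d/p_1=-\sigma_1$ and target integrability $p_2=2$, as desired. The four hypotheses to check are: (i) $s_1+s_2=d/p-\sigma_1\geq 0$, (ii) $s_1=d/p\leq d/p_1=d/p$, (iii) $s_2=-\sigma_1<\min(d/p,d/2)=d/p$, and (iv) $1/p+1/2\leq 1$. Item (ii) is an equality and (iv) is equivalent to $p\geq 2$, so both are immediate. For (i) one uses $\sigma_1\leq\sigma_0=2d/p-d/2$ together with $p\geq 2$ to obtain $d/p-\sigma_1\geq d/2-d/p\geq 0$. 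For (iii) one uses the lower bound $\sigma_1>1-d/2$ together with the upper bound $p\leq d^*=2d/(d-2)$; indeed $p\leq d^*$ is equivalent to $1-d/2\geq -d/p$, so $\sigma_1>-d/p$, i.e.\ $-\sigma_1<d/p$.

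For \eqref{R-E54} I would apply Proposition \ref{prop5.1} with $p_1=p$, $p_2=2$, $s_1=d/p-1$ and $s_2=d/p-d/2-\sigma_1+1$, which yields the target regularity $s_1+s_2-d/p_1=d/p-d/2-\sigma_1$ at integrability $p_2=2$. The hypotheses become: (i) $s_1+s_2=2d/p-d/2-\sigma_1=\sigma_0-\sigma_1\geq 0$, which is immediate from $\sigma_1\leq\sigma_0$; (ii) $s_1=d/p-1\leq d/p$, immediate; (iii) $s_2<\min(d/p,d/2)$, and the more restrictive of the two bounds (for $p\geq 2$) is $s_2<d/p$, which reduces to $\sigma_1>1-d/2$, again part of our hypothesis; (iv) $1/p+1/2\leq 1$, i.e.\ $p\geq 2$.

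The mildly delicate step in both cases is the verification of the strict inequality $s_2<\min(d/p_1,d/p_2)$, because this is the place where the endpoint regularity constraints on $\sigma_1$ and $p$ (in particular the sharp range $p\leq d^*$ needed for \eqref{R-E53} and $\sigma_1>1-d/2$ needed for both estimates) enter. Once these bookkeeping verifications are performed, the two estimates follow from Proposition \ref{prop5.1} with no further analysis, so no separate paraproduct decomposition is needed here.
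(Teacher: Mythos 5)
Your proposal is correct and uses exactly the same specialization as the paper: both estimates are obtained from Proposition \ref{prop5.1} with $p_1=p$, $p_2=2$ and the identical choices $s_1=\frac dp$, $s_2=-\sigma_1$ for \eqref{R-E53} and $s_1=\frac dp-1$, $s_2=\frac dp-\frac d2-\sigma_1+1$ for \eqref{R-E54}. Your verification of the hypotheses (in particular that $p\le d^*$ gives $-\sigma_1<\frac d2-1\le \frac dp$ and that $\sigma_1>1-\frac d2$ gives the strict bound on $s_2$) matches the paper's, just spelled out in more detail.
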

\begin{proof}
Observe that $\sigma_1\leq\sigma_0\leq \frac dp$ and $-\sigma_1<\frac d2-1\leq \frac dp$ (owing to $2\leq p\leq d^{*}$). Hence,
\eqref{R-E53} stems from Proposition \ref{prop5.1} with $s_{1}=\frac dp, s_{2}=-\sigma_{1}, p_{1}=p$ and $p_{2}=2$. Also,
\eqref{R-E54} is followed by Proposition \ref{prop5.1} with $s_{1}=\frac {d}{p}-1, s_{2}=\frac dp-\frac d2-\sigma_{1}+1$ and $p_{1}=p$ and $p_{2}=2$.
\end{proof}

Now, we turn to bound the evolution of negative Besov norm, which is the main ingredient in the proof of Theorem \ref{thm1.2}.
\begin{lem}\label{lem5.1}Let $1-\frac d2<\sigma_{1}\leq \sigma_{0}$ and $p$ satisfy \eqref{R-E7}.
It holds that
\begin{multline}\label{R-E55}
\Big(\|(a,u)(t)\|^{\ell}_{\dot B^{-\sigma_{1}}_{2,\infty}}\Big)^2\lesssim \Big(\|(a_0,u_0)\|^{\ell}_{\dot B^{-\sigma_{1}}_{2,\infty}}\Big)^2\hfill \cr \hfill+\int^{t}_{0}\Big(D^{1}_{p}(\tau)+D^{2}_{p}(\tau)\Big)\Big(\|(a,u)(\tau)\|^{\ell}_{\dot B^{-\sigma_{1}}_{2,\infty}}\Big)^2d\tau
+\int^{t}_{0}D^{3}_{p}(\tau)\|(a,u)(\tau)\|^{\ell}_{\dot B^{-\sigma_{1}}_{2,\infty}}d\tau,
\end{multline}
where $$D^{1}_{p}(t)\triangleq \|(a,u)\|^{\ell}_{\dot B^{\frac d2+1}_{2,1}}+\|a\|^{h}_{\dot B^{\frac dp}_{p,1}}+\|u\|^{h}_{\dot B^{\frac dp+1}_{p,1}},$$
$$D^{2}_{p}(t)\triangleq\|a\|^2_{\dot B^{\frac dp}_{p,1}},$$
$$D^{3}_{p}(t)\triangleq \Big(\|(a,u)\|^{\ell}_{\dot B^{\frac d2-1}_{2,1}}+\|a\|^{h}_{\dot B^{\frac dp}_{p,1}}+\|u\|^{h}_{\dot B^{\frac dp-1}_{p,1}}\Big)\Big(\|a\|^{h}_{\dot B^{\frac dp}_{p,1}}+\|u\|^{h}_{\dot B^{\frac dp+1}_{p,1}}\Big).$$
\end{lem}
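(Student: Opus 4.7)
The plan is to perform, for each low-frequency dyadic block, an $L^2$ energy estimate on the linearized system \eqref{R-E11} and then to repackage the pointwise-in-$j$ information into the desired $\dot B^{-\sigma_1}_{2,\infty}$ norm. For every $j\leq j_0$, applying $\dot\Delta_j\dot S_{j_0}$ to \eqref{R-E11} and reproducing at the level of a single dyadic block the Lyapunov-functional construction of Lemma \ref{lem4.1} (i.e.\ working with the same symmetrizer that gave \eqref{R-E28}, but without summing in $j$) yields
\begin{equation*}
\frac{d}{dt}\mathcal{L}_j+c\,2^{2j}\mathcal{L}_j\lesssim\|\dot\Delta_j(f,g)\|_{L^2},
\end{equation*}
with $\mathcal{L}_j\approx\|\dot\Delta_j(a,u)\|_{L^2}$. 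Multiplying this by $\mathcal{L}_j$ itself so that the left-hand side becomes $\tfrac12\tfrac{d}{dt}\mathcal{L}_j^2+$ dissipation, discarding the non-negative dissipation, integrating on $[0,t]$, multiplying by $2^{-2j\sigma_1}$, bounding each of the factors $2^{-j\sigma_1}\|\dot\Delta_j(f,g)\|_{L^2}$ and $2^{-j\sigma_1}\mathcal{L}_j$ inside the integral by the corresponding low-frequency sup-norm, and finally taking $\sup_{j\leq j_0}$ on the left, I arrive at the master inequality
\begin{equation*}
\bigl(\|(a,u)(t)\|^\ell_{\dot B^{-\sigma_1}_{2,\infty}}\bigr)^2\lesssim\bigl(\|(a_0,u_0)\|^\ell_{\dot B^{-\sigma_1}_{2,\infty}}\bigr)^2+\int_0^t\|(a,u)\|^\ell_{\dot B^{-\sigma_1}_{2,\infty}}\,\|(f,g)\|^\ell_{\dot B^{-\sigma_1}_{2,\infty}}\,d\tau.
\end{equation*}

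Everything then reduces to the nonlinear estimate
\begin{equation*}
\|(f,g)\|^\ell_{\dot B^{-\sigma_1}_{2,\infty}}\lesssim (D^1_p+D^2_p)\,\|(a,u)\|^\ell_{\dot B^{-\sigma_1}_{2,\infty}}+D^3_p,
\end{equation*}
after which \eqref{R-E55} follows simply by inserting it into the master inequality and splitting the integrand as $\|(a,u)\|^\ell_{\dot B^{-\sigma_1}_{2,\infty}}\cdot[(D^1_p+D^2_p)\,\|(a,u)\|^\ell_{\dot B^{-\sigma_1}_{2,\infty}}+D^3_p]$. To prove this nonlinear estimate I would decompose each factor entering $f=-\div(au)$ and $g=-u\cdot\nabla u-I(a)\cA u-k(a)\nabla a+\tfrac{1}{1+a}\div\bigl(2\wt\mu(a)D(u)+\wt\lambda(a)\div u\,\Id\bigr)$ into its low- and high-frequency parts, dealing with the compositions $I(a),\,k(a),\,\wt\mu(a),\,\wt\lambda(a)$ through Proposition \ref{prop3.6}. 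When both factors of a resulting bilinear term sit at low frequencies, the non-standard estimates \eqref{R-E53}--\eqref{R-E54} of Corollary \ref{cor5.1}, whose hypotheses are tailored precisely to the range $1-d/2<\sigma_1\leq\sigma_0$, produce bounds of the shape $\|a\|_{\dot B^{d/p}_{p,1}}\,\|(a,u)\|^\ell_{\dot B^{-\sigma_1}_{2,\infty}}$. After further splitting $\|a\|_{\dot B^{d/p}_{p,1}}\leq\|a\|^\ell_{\dot B^{d/p}_{p,1}}+\|a\|^h_{\dot B^{d/p}_{p,1}}$ and controlling $\|a\|^\ell_{\dot B^{d/p}_{p,1}}$ by $\|a\|^\ell_{\dot B^{d/2+1}_{2,1}}$ via Bernstein, these contributions feed the $(D^1_p+D^2_p)\,X$ part, with the $D^2_p=\|a\|_{\dot B^{d/p}_{p,1}}^2$ piece originating from the genuinely quadratic bounds that Proposition \ref{prop3.6} imposes on the compositions.

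The delicate point is that when at least one factor of a bilinear term lives at high frequencies, Corollary \ref{cor5.1} no longer applies and the only high-frequency information available on $(a,u)$ is $L^p$-based. One must then bridge from $L^p$ control at high frequencies to $L^2$-based low-frequency output through Bernstein inequalities and the embeddings of Proposition \ref{prop3.4}; the resulting index constraints split naturally into the non-oscillating regime $2\leq p\leq d$ and the oscillating regime $p>d$, which have to be treated separately, exactly as announced in the introduction. In each sub-regime the missing regularity is recovered from the high-frequency dissipation norms $\|a\|^h_{\dot B^{d/p}_{p,1}}$ and $\|u\|^h_{\dot B^{d/p+1}_{p,1}}$, producing the product structure of $D^3_p$. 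The main obstacle of the proof is precisely this high-frequency analysis: one has to manufacture negative-index $L^2$-Besov regularity at low frequencies out of purely $L^p$-high-frequency data, and verify that the index constraints of Proposition \ref{prop5.1} remain compatible with the whole sharp range $1-d/2<\sigma_1\leq\sigma_0$ in both sub-regimes for $p$.
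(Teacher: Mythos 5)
Your proposal is correct and follows essentially the same route as the paper: a blockwise low-frequency energy estimate yielding the master inequality \eqref{R-E57}, then the split of $(f,g)$ into parts according to whether the differentiated factor is at low or high frequency, with Corollary \ref{cor5.1} handling the former and an $L^p$-to-$L^2$ bridge (made precise in the paper by Proposition \ref{prop5.2}, quoted from \cite{DX}) handling the latter, separated into the regimes $2\leq p\leq d$ and $p>d$. The only caveat is that the high-frequency step you correctly flag as the main obstacle is left at the level of strategy; in the paper it also requires the interpolation \eqref{R-E82} for the term $g_4(a,u^h)$, which is where the lower bound $\sigma_1>1-d/2$ actually enters.
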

\begin{proof}
It follows from \eqref{R-E23} that
\begin{eqnarray}\label{R-E56}
\frac 12 \frac{d}{dt}\|(a_k,v_k)\|^2_{L^2}+\|\Lambda v_k\|^2_{L^2}\leq (\|f_k\|_{L^2}+\|h_k\|_{L^2})\|(a_k,v_k)\|_{L^2}.
\end{eqnarray}
By performing a routine procedure, one can arrive at
\begin{multline}\label{R-E57}
\Big(\|(a,u)(t)\|^{\ell}_{\dot B^{-\sigma_{1}}_{2,\infty}}\Big)^2\lesssim \Big(\|(a_0,u_0)\|^{\ell}_{\dot B^{-\sigma_{1}}_{2,\infty}}\Big)^2+\int^{t}_{0}\|(f,g)\|^{\ell}_{\dot B^{-\sigma_{1}}_{2,\infty}}\|(a,u)(\tau)\|^{\ell}_{\dot B^{-\sigma_{1}}_{2,\infty}}d\tau.
\end{multline}
In what follow, we focus on the nonlinear norm $\|(f,g)\|^{\ell}_{\dot B^{-\sigma_{1}}_{2,\infty}}$. To this end, it is convenient to decompose $f$ and $g$ in terms of low-frequency and high-frequency parts:
\begin{eqnarray*}
f=f^{\ell}+f^{h}
\end{eqnarray*}
with
$$f^{\ell}\triangleq-a\,\mathrm{div}\,u^{\ell}-u\cdot\nabla a^{\ell},\ \ \ \ \ \ f^{h}\triangleq-a\,\mathrm{div}\,u^{h}-u\cdot\nabla a^{h}$$
and
\begin{eqnarray*}
g=g^{\ell}+g^{h}
\end{eqnarray*}
with
\begin{eqnarray*}
g^{\ell}\triangleq-u\cdot \nabla u^{\ell}-k(a) \nabla a^{\ell}+g_{3}(a,u^{\ell})+g_{4}(a,u^{\ell}),\\
g^{h}\triangleq-u\cdot \nabla u^{h}-k(a) \nabla a^{h}+g_{3}(a,u^{h})+g_{4}(a,u^{h}),
\end{eqnarray*}
where
\begin{eqnarray*}
&&g_{3}\left(a,v\right)=\frac {1}{1+a}\left(2\widetilde{\mu }\left(a\right)\mathrm{div}\,D\left(v\right)+
\widetilde{\lambda}(a) \nabla \mathrm{div}\,v\right)-I\left(a\right)\mathcal{A}v, \\
&&g_{4}(a,v)=\frac {1}{1+a}\left(2\widetilde{\mu}'(a)D\left(v\right)\cdot\nabla a+\widetilde{\lambda}'(a)\,\mathrm{div}\,v\,\nabla a\right)
\end{eqnarray*} and
$$z^{\ell}\triangleq\sum_{k\leq k_0} \dot{\Delta}_{k}z,\ \  \ \ z^{h}\triangleq z-z^{\ell} \ \ \hbox{for} \ \ z=a,u.$$

We use \eqref{R-E53}-\eqref{R-E54} to estimate those terms of $f$ and $g$ with $a^{\ell}$ or $u^{\ell}$. For instance, for
$a\,\mathrm{div}\,u^{\ell}=(a^{\ell}+a^{h})\mathrm{div}u^{\ell}$, one can get from \eqref{R-E53} that
\begin{eqnarray*}
\|a^{\ell}\,\mathrm{div}\,u^{\ell}\|^{\ell}_{\dot{B}^{-\sigma_{1}}_{2,\infty}}\lesssim \|\mathrm{div}\,u^{\ell}\|_{\dot{B}^{\frac dp}_{p,1}}\|a^{\ell}\|_{\dot{B}^{-\sigma_{1}}_{2,\infty}}\lesssim\|u\|^{\ell}_{\dot{B}^{\frac d2+1}_{2,1}}\|a\|^{\ell}_{\dot{B}^{-\sigma_{1}}_{2,\infty}}
\end{eqnarray*}
and
\begin{eqnarray*}
\|a^{h}\,\mathrm{div}\,u^{\ell}\|^{\ell}_{\dot{B}^{-\sigma_{1}}_{2,\infty}}\lesssim \|a^{h}\|_{\dot{B}^{\frac dp}_{p,1}}\|\mathrm{div}\,u^{\ell}\|_{\dot{B}^{-\sigma_{1}}_{2,\infty}}\lesssim \|a\|^{h}_{\dot{B}^{\frac dp}_{p,1}}\|u\|^{\ell}_{\dot{B}^{-\sigma_{1}}_{2,\infty}}.
\end{eqnarray*}
Therefore,
\begin{eqnarray}\label{R-E58}
\|a\,\mathrm{div}\,u^{\ell}\|_{\dot{B}^{-\sigma_{1}}_{2,\infty}}\lesssim \Big(\|u\|^{\ell}_{\dot{B}^{\frac d2+1}_{2,1}}+\|a\|^{h}_{\dot{B}^{\frac dp}_{p,1}}\Big)\|(a,u)\|^{\ell}_{\dot{B}^{-\sigma_{1}}_{2,\infty}}.
\end{eqnarray}

The estimates of $u\cdot\nabla a^{\ell}$ and $u\cdot \nabla u^{\ell}$ follow from essentially the same procedures as $a\,\mathrm{div}\,u^{\ell}$ so that
\begin{eqnarray}\label{R-E59}
\|u\cdot\nabla a^{\ell}\|^{\ell}_{\dot{B}^{-\sigma_{1}}_{2,\infty}}\lesssim \Big(\|a\|^{\ell}_{\dot{B}^{\frac d2+1}_{2,1}}+\|u\|^{h}_{\dot{B}^{\frac dp+1}_{p,1}}\Big)\|(a,u)\|^{\ell}_{\dot{B}^{-\sigma_{1}}_{2,\infty}} \end{eqnarray}
\begin{eqnarray}\label{R-E60}
\|u\cdot \nabla u^{\ell}\|^{\ell}_{\dot{B}^{-\sigma_{1}}_{2,\infty}}\lesssim \Big(\|u\|^{\ell}_{\dot{B}^{\frac d2+1}_{2,1}}+\|u\|^{h}_{\dot{B}^{\frac dp+1}_{p,1}}\Big)\|u\|^{\ell}_{\dot{B}^{-\sigma_{1}}_{2,\infty}}.
\end{eqnarray}
Bounding nonlinear terms involving composition functions is more elaborate. For example,
let us take a look at the term $k(a) \nabla a^{\ell}$. Keeping in mind that $k(0)=0$, one may write
$$k(a)=k'(0)a+\tilde{k}(a)a$$
for some smooth function $\tilde{k}$ vanishing at $0$. Arguing similarly as \eqref{R-E58} gives
\begin{eqnarray}\label{R-E61}
\|k'(0)a\nabla a^{\ell}\|^{\ell}_{\dot{B}^{-\sigma_{1}}_{2,\infty}} \lesssim \Big(\|a\|^{\ell}_{\dot{B}^{\frac d2+1}_{2,1}}+\|a\|^{h}_{\dot{B}^{\frac dp}_{p,1}}\Big)\|a\|^{\ell}_{\dot{B}^{-\sigma_{1}}_{2,\infty}}.
\end{eqnarray}
On the other hand, it follows from Propositions \ref{prop3.5}-\ref{prop3.6} that
\begin{eqnarray}\label{R-E62}
\|\tilde{k}(a)a\nabla a^{\ell}\|^{\ell}_{\dot{B}^{-\sigma_{1}}_{2,\infty}}\lesssim \|a\|^2_{\dot{B}^{\frac dp}_{p,1}}\|\nabla a^{\ell}\|_{\dot{B}^{-\sigma_{1}}_{2,\infty}}\lesssim \|a\|^2_{\dot{B}^{\frac dp}_{p,1}}\|a\|^{\ell}_{\dot{B}^{-\sigma_{1}}_{2,\infty}}.
\end{eqnarray}
Putting \eqref{R-E61} and \eqref{R-E62} together leads to
\begin{eqnarray}\label{R-E63}
\|k(a)\nabla a^{\ell}\|^{\ell}_{\dot{B}^{-\sigma_{1}}_{2,\infty}}\lesssim \Big(\|a\|^{\ell}_{\dot{B}^{\frac d2+1}_{2,1}}+\|a\|^{h}_{\dot{B}^{\frac dp}_{p,1}}+\|a\|^2_{\dot{B}^{\frac dp}_{p,1}}\Big)\|a\|^{\ell}_{\dot{B}^{-\sigma_{1}}_{2,\infty}}.
\end{eqnarray}
Similarly,
\begin{eqnarray}\label{R-E64}
\|g_{3}(a,u^{\ell})\|^{\ell}_{\dot{B}^{-\sigma_{1}}_{2,\infty}}\lesssim \Big(\|u\|^{\ell}_{\dot{B}^{\frac d2+1}_{2,1}}+\|a\|^{h}_{\dot{B}^{\frac dp}_{p,1}}+\|a\|^2_{\dot{B}^{\frac dp}_{p,1}}\Big)\|(a,u)\|^{\ell}_{\dot{B}^{-\sigma_{1}}_{2,\infty}}.
\end{eqnarray}
Next, we estimate  $g_{4}(a,u^{\ell})$. It suffices to estimate the first term in $g_{4}(a,u^{\ell})$, since the second one can be similarly handled. Denote by $J(a)$ the smooth function fulfilling
$J'(a)=\frac{2\mu'(a)}{1+a}$ and $J(0)=0$, so that $\nabla J(a)=\frac{2\mu'(a)}{1+a} \nabla a$. For convenience, we use the decomposition
 $J(a)=J'(0)a+\tilde{J}(a)a$. Then, it follows from \eqref{R-E53} that
\begin{eqnarray}\label{R-E65}
\|\nabla a^{\ell}\cdot D(u^{\ell})\|^{\ell}_{\dot{B}^{-\sigma_{1}}_{2,\infty}} \lesssim \|\nabla a^{\ell}\|_{\dot{B}^{\frac dp}_{p,1}}\| D(u^{\ell})\|_{\dot{B}^{-\sigma_{1}}_{2,\infty}} \lesssim \|a\|^{\ell}_{\dot{B}^{\frac d2+1}_{2,1}}\|u\|^{\ell}_{\dot{B}^{-\sigma_{1}}_{2,\infty}}.
\end{eqnarray}
Observe that the relation $\sigma_{1}\leq \sigma_{1}+\frac d2-\frac dp (p\geq2)$, we deduce that from \eqref{R-E54}
\begin{multline}\label{R-E66}
\|\nabla a^{h}\cdot D(u^{\ell})\|^{\ell}_{\dot{B}^{-\sigma_{1}}_{2,\infty}}\lesssim \|\nabla a^{h}\cdot D(u^{\ell})\|^{\ell}_{\dot{B}^{\frac dp-\frac d2-\sigma_{1}}_{2,\infty}}
\lesssim \|\nabla a^{h}\|_{\dot{B}^{\frac dp-1}_{p,1}}\|D(u^{\ell})\|_{\dot{B}^{\frac dp-\frac d2-\sigma_{1}+1}_{2,\infty}},
\end{multline}
Furthermore, we see that  $p\leq d^{*}$ is equivalent to $\frac dp-\frac d2-\sigma_{1}+1\geq-\sigma_{1}$, which implies that
\begin{eqnarray}\label{R-E67}
\|\nabla a^{h}\cdot D(u^{\ell})\|^{\ell}_{\dot{B}^{-\sigma_{1}}_{2,\infty}}\lesssim \|a\|^{h}_{\dot{B}^{\frac dp}_{p,1}}\|u\|^{\ell}_{\dot{B}^{-\sigma_{1}}_{2,\infty}}.
\end{eqnarray}
In addition, the remaining term with $\tilde{J}(a)a$ can be estimated as similarly:
\begin{multline}\label{R-E68}
\|\nabla (\tilde{J}(a)a)\cdot D(u^{\ell})\|^{\ell}_{\dot{B}^{-\sigma_{1}}_{2,\infty}}\lesssim \|\nabla (\tilde{J}(a)a)\cdot D(u^{\ell})\|^{\ell}_{\dot{B}^{\frac dp-\frac d2-\sigma_{1}}_{2,\infty}}\hfill \cr \hfill \lesssim \|\tilde{J}(a)a\|_{\dot{B}^{\frac dp}_{p,1}} \|D(u^{\ell})\|_{\dot{B}^{\frac dp-\frac d2-\sigma_{1}+1}_{2,\infty}}
\lesssim \|a\|^2_{\dot{B}^{\frac dp}_{p,1}}\|u\|^{\ell}_{\dot{B}^{-\sigma_{1}}_{2,\infty}}.
\end{multline}
Hence, together with \eqref{R-E65},\eqref{R-E67} and \eqref{R-E68}, we can conclude that
\begin{eqnarray}\label{R-E69}
\|g_{4}(a,u^{\ell})\|^{\ell}_{\dot{B}^{-\sigma_{1}}_{2,\infty}}\lesssim \Big(\|a\|^{\ell}_{\dot{B}^{\frac d2+1}_{2,1}}+\|a\|^{h}_{\dot{B}^{\frac dp}_{p,1}}+\|a\|^2_{\dot{B}^{\frac dp}_{p,1}}\Big)\|u\|^{\ell}_{\dot{B}^{-\sigma_{1}}_{2,\infty}}.
\end{eqnarray}

However, it seems to be difficult to get the suitable bounds for those terms of $f$ and $g$ with $a^{h}$ or $u^{h}$ by resorting to Proposition \ref{prop5.1} only. We will take advantage of the following result whose proof has been shown by \cite{DX}.
\begin{prop}\label{prop5.2} Let $k_0\in\Z,$ and denote $z^\ell\triangleq\dot S_{k_0}z,$  $z^h\triangleq z-z^\ell$ and, for any $s\in\R,$
$$
\|z\|_{\dot B^s_{2,\infty}}^\ell\triangleq\sup_{k\leq k_0}2^{ks} \|\ddk z\|_{L^2}.
$$
There exists a universal integer $N_0$ such that  for any $2\leq p\leq 4$ and $\sigma>0,$ we have
\begin{eqnarray}\label{R-E70}
&&\|f g^h\|_{\dot B^{-\sigma_0}_{2,\infty}}^\ell\leq C \bigl(\|f\|_{\dot B^\sigma_{p,1}}+\|\dot S_{k_0+N_0}f\|_{L^{p^*}}\bigr)\|g^h\|_{\dot B^{-\sigma}_{p,\infty}}\\\label{R-E71}
&&\|f^h g\|_{\dot B^{-\sigma_0}_{2,\infty}}^\ell
\leq C \bigl(\|f^h\|_{\dot B^\sigma_{p,1}}+\|\dot S_{k_0+N_0}f^h\|_{L^{p^*}}\bigr)\|g\|_{\dot B^{-\sigma}_{p,\infty}}
\end{eqnarray}
with  $\sigma_0\triangleq \frac{2d}p-\frac d2$ and $\frac1{p^*}\triangleq\frac12-\frac1p,$
and $C$ depending only on $k_0,$ $d$ and $\sigma.$
\end{prop}

Consider first the case $2\leq p\leq d$. If $2\leq p< d$, then \eqref{R-E70} with $\sigma=\frac dp-1$ yields
\begin{eqnarray}\label{R-E72}
\|f g^h\|_{\dot B^{-\sigma_1}_{2,\infty}}^\ell\lesssim \|f g^h\|_{\dot B^{-\sigma_0}_{2,\infty}}^\ell \lesssim \Big(\|f\|_{\dot B^{\frac dp-1}_{p,1}}+\|f^{\ell}\|_{L^{p^{*}}}\Big)\|g^h\|_{\dot B^{1-\frac dp}_{p,1}},
\end{eqnarray}
since $\sigma_1\leq \sigma_0$.
In the limit case $p=d$, one can get by the Sobolev embedding that
\begin{multline}\label{R-E73}
\|f g^h\|_{\dot B^{-\sigma_1}_{2,\infty}}^\ell\lesssim\|f g^h\|_{\dot B^{-\sigma_0}_{2,\infty}}^\ell\lesssim \|f g^h\|_{L^{\frac d2}}\leq \|f\|_{L^d}\| g^h\|_{L^d}\lesssim \|f\|_{\dot B^{0}_{d,1}}\|g^h\|_{\dot B^{0}_{d,1}}.
\end{multline}
Furthermore, using the embedding $\dot{B}^{\frac dp}_{2,1}\hookrightarrow L^{p^*}$ and the fact that $\frac d2-1\leq \frac dp$ and $1- \frac dp\leq\frac dp-1$, one has
\begin{eqnarray}\label{R-E74}
\|f g^h\|_{\dot B^{-\sigma_1}_{2,\infty}}^\ell \lesssim \Big(\|f^{\ell}\|_{\dot B^{\frac d2-1}_{2,1}}+\|f^h\|_{\dot B^{\frac dp-1}_{p,1}}\Big)\|g^h\|_{\dot B^{\frac dp-1}_{p,1}}.
\end{eqnarray}
Therefore, we get the following estimates:
\begin{eqnarray}\label{R-E75}
\|a\mathrm{div}u^{h}\|_{\dot B^{-\sigma_1}_{2,\infty}}^\ell \lesssim \Big(\|a\|^{\ell}_{\dot B^{\frac d2-1}_{2,1}}+\|a\|^h_{\dot B^{\frac dp}_{p,1}}\Big)\|u\|^h_{\dot B^{\frac dp+1}_{p,1}},\\\label{R-E76}
\|u\cdot\nabla a^h\|_{\dot B^{-\sigma_1}_{2,\infty}}^\ell \lesssim \Big(\|u\|^{\ell}_{\dot B^{\frac d2-1}_{2,1}}+\|u\|^h_{\dot B^{\frac dp-1}_{p,1}}\Big)\|a\|^h_{\dot B^{\frac dp}_{p,1}},\\\label{R-E77}
\|u\cdot \nabla u^h\|_{\dot B^{-\sigma_1}_{2,\infty}}^\ell \lesssim \Big(\|u\|^{\ell}_{\dot B^{\frac d2-1}_{2,1}}+\|u\|^h_{\dot B^{\frac dp-1}_{p,1}}\Big)\|u\|^h_{\dot B^{\frac dp+1}_{p,1}}.
\end{eqnarray}

Next, using the composition inequality and the embeddings $\dot{B}^{\frac dp}_{2,1}\hookrightarrow L^{p^*}$ and $\dot{B}^{\sigma_{0}}_{p,1}\hookrightarrow L^{p^*}$ yields
$$
 \|k(a)\|_{L^{p^*}}\lesssim  \|a\|_{L^{p^*}}\lesssim\|a^\ell\|_{\dot B^{\frac dp}_{2,1}}+\|a^h\|_{\dot B^{\sigma_0}_{p,1}}
 \lesssim \|a\|^\ell_{\dot B^{\frac d2-1}_{2,1}}+\|a\|^h_{\dot B^{\frac dp}_{p,1}}
 $$
and
$$
\|k(a)\|_{\dot B^{\frac dp-1}_{p,1}}\lesssim \|a\|_{\dot B^{\frac dp-1}_{p,1}}\lesssim \|a\|^\ell_{\dot B^{\frac d2-1}_{2,1}}+\|a\|^h_{\dot B^{\frac dp}_{p,1}}
$$
since $d/p-1>-d/p \ (p<2d)$. Therefore, we arrive at
\begin{eqnarray}\label{R-E78}
\|k(a)\nabla a^h\|_{\dot B^{-\sigma_1}_{2,\infty}}^\ell \lesssim \Big(\|a\|^{\ell}_{\dot B^{\frac d2-1}_{2,1}}+\|a\|^h_{\dot B^{\frac dp}_{p,1}}\Big)\|a\|^h_{\dot B^{\frac dp}_{p,1}}.
\end{eqnarray}
Similarly,
\begin{eqnarray}\label{R-E79}
\|g_{3}(a,u^h)\|_{\dot B^{-\sigma_1}_{2,\infty}}^\ell  \lesssim \Big(\|a\|^{\ell}_{\dot B^{\frac d2-1}_{2,1}}+\|a\|^h_{\dot B^{\frac dp}_{p,1}}\Big)\|u\|^h_{\dot B^{\frac dp+1}_{p,1}}.
\end{eqnarray}
Regarding the nonlinear term $g_4(a,u^h)$, the calculation is a little bit careful. Let us first deal with the case $\frac dp-\frac d2<\sigma_1\leq \sigma_0$ if $p\leq d$. By taking $f=\nabla K(a)$ and $g=\nabla u$, it follows from H\"{o}lder inequality that
\begin{multline}\label{R-E80}
 \|g_4(a,u^h)\|^{\ell}_{\dot{B}^{-\sigma_{1}}_{2,\infty}}\lesssim \|g_4(a,u^h)\|^{\ell}_{\dot{B}^{-\sigma_{0}}_{2,\infty}}= \|\nabla K(a)\otimes\nabla u^h\|^{\ell}_{\dot{B}^{-\sigma_{0}}_{2,\infty}}\lesssim \|\nabla K(a)\|_{L^p}\|\nabla u^h\|_{L^p}.
\end{multline}
The embeddings $\dot{B}^{\frac d2-\frac dp}_{2,1}\hookrightarrow L^p$ and $\dot{B}^{0}_{p,1}\hookrightarrow L^p$ yield that
\begin{eqnarray}\label{R-E81}
\|\nabla K(a)\|_{L^p}\lesssim \|\nabla a\|_{L^p} \lesssim \|\nabla a^{\ell}\|_{\dot{B}^{\frac d2-\frac dp}_{2,1}}+\|\nabla a^{h}\|_{\dot{B}^{\frac dp-1}_{p,1}}.
\end{eqnarray}
Owing to $-\sigma_{1}<\frac d2-\frac dp\leq \frac d2-1$, the real interpolation in Proposition \ref{prop3.3} and Young inequalities imply that
\begin{eqnarray}\label{R-E82}
\|\nabla a^{\ell}\|_{\dot{B}^{\frac d2-\frac dp}_{2,1}}\lesssim \|\nabla a^{\ell}\|^{1-\theta_{2}}_{\dot{B}^{-\sigma_{1}}_{2,\infty}}\|\nabla a^{\ell}\|^{\theta_{2}}_{\dot{B}^{\frac d2-1}_{2,\infty}}\lesssim \|a\|^{\ell}_{\dot{B}^{-\sigma_{1}}_{2,\infty}}+\|a\|^{\ell}_{\dot{B}^{\frac d2-1}_{2,1}},
\end{eqnarray}
where $$\theta_{2}=\frac{\sigma_{1}+\frac d2-\frac dp}{\sigma_{1}+\frac d2-1}\in (0,1].$$ Inserting \eqref{R-E81}-\eqref{R-E82} into \eqref{R-E80} leads to
\begin{eqnarray}\label{R-E83}
\|g_4(a,u^h)\|^{\ell}_{\dot{B}^{-\sigma_{1}}_{2,\infty}} \lesssim \Big(\|a\|^{\ell}_{\dot B^{\frac d2-1}_{2,1}}+\|a\|^h_{\dot B^{\frac dp}_{p,1}}\Big)\|u\|^h_{\dot B^{\frac dp+1}_{p,1}}+\|u\|^h_{\dot B^{\frac dp+1}_{p,1}}\|a\|^{\ell}_{\dot{B}^{-\sigma_{1}}_{2,\infty}}.
\end{eqnarray}
Next, we turn to the case $1-\frac d2<\sigma_1\leq \frac dp-\frac d2\leq 0$ if $2\leq p\leq d$. By Sobolev embedding properties and H\"{o}lder inequality, we arrive at
\begin{multline}\label{R-E833}
\|g_4(a,u^h)\|^{\ell}_{\dot{B}^{-\sigma_{1}}_{2,\infty}}\lesssim \|\nabla K(a)\otimes\nabla u^h\|^{\ell}_{\dot{B}^{0}_{2,\infty}} \lesssim \|\nabla K(a)\otimes\nabla u^h\|_{L^2}\lesssim \|\nabla K(a)\|_{L^d}\|\nabla u^h\|_{L^{d^*}},
\end{multline}
with $1/d+1/d^*=1/2$. It follows from Proposition \ref{prop3.4} and $\dot{B}^{\frac dp-1}_{p,1}\hookrightarrow L^d$ that
$$
\|\nabla K(a)\|_{L^d} \lesssim \|\nabla K(a)\|_{\dot{B}^{\frac dp-1}_{p,1}}\lesssim \|a\|_{\dot{B}^{\frac dp}_{p,1}}
$$
and
$$
\|\nabla u^h\|_{L^{d^*}}\lesssim \|\nabla u^h\|_{\dot{B}^{0}_{d^{*},1}}\lesssim \|\nabla u^h\|_{\dot{B}^{\frac{d}{d^*}}_{d^{*},1}}\lesssim \|u\|^{h}_{\dot{B}^{\frac{d}{p}+1}_{p,1}} \ \ (p\leq d^*).
$$
Therefore, we obtain
\begin{eqnarray}\label{R-E834}
\|g_4(a,u^h)\|^{\ell}_{\dot{B}^{-\sigma_{1}}_{2,\infty}} \lesssim \Big(\|a\|^{\ell}_{\dot B^{\frac d2-1}_{2,1}}+\|a\|^h_{\dot B^{\frac dp}_{p,1}}\Big)\|u\|^h_{\dot B^{\frac dp+1}_{p,1}}.
\end{eqnarray}

Consider now the oscillation case $p>d$. Again applying \eqref{R-E70} with $\sigma=1-\frac dp$ implies that
\begin{eqnarray}\label{R-E84}
\|f g^h\|_{\dot B^{-\sigma_1}_{2,\infty}}^\ell\lesssim \|f g^h\|_{\dot B^{-\sigma_0}_{2,\infty}}^\ell \lesssim \Big(\|f\|_{\dot B^{1-\frac dp}_{p,1}}+\|f^{\ell}\|_{L^{p^{*}}}\Big)\|g^h\|_{\dot B^{\frac dp-1}_{p,1}}.
\end{eqnarray}
By using the embedding $\dot B^{1-\sigma_0}_{2,1}\hookrightarrow \dot B^{1-\frac dp}_{p,1}$ and the fact $\frac d2-1<1-\sigma_0$ owing to $p>d$, we obtain
\begin{eqnarray}\label{R-E85}
\|f g^h\|_{\dot B^{-\sigma_1}_{2,\infty}}^\ell \lesssim \Big(\|f^{\ell}\|_{\dot B^{\frac d2-1}_{2,1}}+\|f^h\|_{\dot B^{\frac dp}_{p,1}}\Big)\|g^h\|_{\dot B^{\frac dp-1}_{p,1}}.
\end{eqnarray}
Consequently, we can get the same estimates for $a\mathrm{div}u^{h}, k(a)\nabla a^h$ and $g_{3}(a,u^h)$ as \eqref{R-E75}, \eqref{R-E78} and \eqref{R-E79}, respectively. In addition,
\begin{multline}\label{R-E86}
\|u\cdot\nabla a^h\|_{\dot B^{-\sigma_1}_{2,\infty}}^\ell \lesssim \Big(\|u\|^{\ell}_{\dot B^{\frac d2-1}_{2,1}}+\|u\|^h_{\dot B^{\frac dp}_{p,1}}\Big)\|a\|^h_{\dot B^{\frac dp}_{p,1}} \hfill \cr \hfill \lesssim \|u\|^{\ell}_{\dot B^{\frac d2-1}_{2,1}}\|a\|^h_{\dot B^{\frac dp}_{p,1}}+\|a\|^h_{\dot B^{\frac dp}_{p,1}}\|u\|^h_{\dot B^{\frac dp+1}_{p,1}}.
\end{multline}
On the other hand, the interpolation inequality implies that
\begin{eqnarray}\label{R-E87}
\|u^h\|^2_{\dot B^{\frac dp}_{p,1}}\lesssim \|u\|^h_{\dot B^{\frac dp-1}_{p,1}}\|u\|^h_{\dot B^{\frac dp+1}_{p,1}},
\end{eqnarray}
which leads to
\begin{multline}\label{R-E88}
\|u\cdot \nabla u^h\|_{\dot B^{-\sigma_1}_{2,\infty}}^\ell \lesssim \Big(\|u\|^{\ell}_{\dot B^{\frac d2-1}_{2,1}}+\|u\|^h_{\dot B^{\frac dp}_{p,1}}\Big)\|u\|^h_{\dot B^{\frac dp}_{p,1}} \lesssim \Big( \|u\|^{\ell}_{\dot B^{\frac d2-1}_{2,1}} +\|u\|^h_{\dot B^{\frac dp-1}_{p,1}}\Big)
\|u\|^h_{\dot B^{\frac dp+1}_{p,1}}
\end{multline}
To estimate $g_{4}(a,u^h)$, one can use \eqref{R-E71} with $\sigma=1-d/p$ to show that for any smooth
function $F$ vanishing at $0,$
  $$
  \|\nabla F(a)\otimes\nabla u^h\|^{\ell}_{\dot{B}^{-\sigma_{0}}_{2,\infty}}\lesssim
  \biggl(\|\nabla u^h\|_{\dot B^{1-\frac dp}_{p,1}}+\sum_{k=k_0}^{k_0+N_0-1}\|\ddk\nabla u^h\|_{L^{p^*}}\biggr)
  \|\nabla F(a)\|_{\dot B^{\frac dp-1}_{p,1}}.
  $$
  As $p^*\geq p,$ the Bernstein inequality ensures that $\|\ddk\nabla u^h\|_{L^{p^*}}\lesssim \|\ddk \nabla u^h\|_{L^p}$ for $k_0\leq k< k_0+N_0.$
  Hence, thanks to Proposition \ref{prop3.6}, the fact $\sigma_1\leq\sigma_0$ and $1-\frac dp<\frac dp,$ we have
 \begin{eqnarray}\label{R-E89}
 \hspace{5mm}
  \|g_4(a,u^h)\|^{\ell}_{\dot{B}^{-s_{1}}_{2,\infty}}\lesssim
  \|a\|_{\dot B^{\frac dp}_{p,1}}     \|\nabla u^h\|_{\dot B^{1-\frac dp}_{p,1}}
  \lesssim  \Big(\|a\|^{\ell}_{\dot B^{\frac d2-1}_{2,1}}+\|a\|^{h}_{\dot B^{\frac dp}_{p,1}}\Big) \|u\|^h_{\dot B^{\frac dp+1}_{p,1}}.
  \end{eqnarray}
By inserting above all estimates into \eqref{R-E57} yields \eqref{R-E55}.
\end{proof}

Noticing that the definition of $\cX_p(t)$ in Theorem \ref{thm1.1}, it is easy to see that
\begin{eqnarray}\label{R-E90}
\int^{t}_{0}\Big(D^{1}_{p}(\tau)+D^{3}_{p}(\tau)\Big)d\tau\leq \cX_p +\cX_p^{2}\leq C \cX_{p,0},\end{eqnarray}
since $\cX_{p,0}\ll 1$. In addition, one has
$$\|a^{\ell}\|^2_{L^2_{t}(\dot B^{\frac dp}_{p,1})}\lesssim \|a^{\ell}\|_{L^{\infty}_{t}(\dot B^{\frac dp-1}_{p,1})}\|a^{\ell}\|_{L^{1}_{t}(\dot B^{\frac dp+1}_{p,1})}\lesssim \|a\|^{\ell}_{L^{\infty}_{t}(\dot B^{\frac d2-1}_{2,1})}\|a\|^{\ell}_{L^{1}_{t}(\dot B^{\frac d2+1}_{2,1})}$$
and
$$
\|a^{h}\|^2_{L^2_{t}(\dot B^{\frac dp}_{p,1})}\lesssim \|a\|^{h}_{L^{\infty}_{t}(\dot B^{\frac dp}_{p,1})}\|a\|^{h}_{L^{1}_{t}(\dot B^{\frac dp}_{p,1})}.
$$
Consequently, it is shown that
\begin{eqnarray}\label{R-E91}
\int^{t}_{0}D^{2}_{p}(\tau)d\tau \lesssim\cX_p^{2}\leq C \cX_{p,0}.
\end{eqnarray}
Finally, combining \eqref{R-E90}-\eqref{R-E91}, one can employ nonlinear generalisations of the Gronwall's inequality (see for example, Page 360 of \cite{MPF}) and get
\begin{eqnarray}\label{R-E92}
\|(a,u)(t,\cdot)\|^{\ell}_{\dot B^{-\sigma_{1}}_{2,\infty}}\leq C_0
\end{eqnarray}
for all $t\geq0$, where $C_0>0$ depends on the norm $\|(a_0,u_0)\|^{\ell}_{\dot B^{-\sigma_{1}}_{2,\infty}}$.


\section{Proofs of main results}\setcounter{equation}{0}
The last section is devoted to the proofs of Theorem \ref{thm1.2} and Corollary \ref{cor1.1}.

\subsection{Proof of Theorem \ref{thm1.2}}
It follows from Lemmas \ref{lem4.1} and \ref{lem4.2} that
\begin{multline}\label{R-E93}
\frac{d}{dt}\Big(\|(a,u)^{\ell}\|_{\dot{B}^{\frac{d}{2}-1}_{2,1}}+\|(\nabla a, u)\|_{\dot B^{\frac dp-1}_{p,1}}^h\Big)+(\|(a,u)^{\ell}\|_{\dot{B}^{\frac{d}{2}+1}_{2,1}}+\|\nabla a\|_{\dot B^{\frac dp-1}_{p,1}}^h+\|u\|_{\dot B^{\frac dp+1}_{p,1}}^h)\hfill\cr\hfill \lesssim \|(f,g)\|^{\ell}_{\dot{B}^{\frac{d}{2}-1}_{2,1}}+\|f\|_{\dot B^{\frac dp-2}_{p,1}}^h+\|g\|_{\dot B^{\frac dp-1}_{p,1}}^h+\|\nabla u\|_{\dot B^{\frac dp}_{p,1}}\|a\|_{\dot B^{\frac dp}_{p,1}},
\end{multline}
where $$\|z\|^{\ell}_{\dot{B}^{s}_{2,1}}\triangleq \sum_{k\leq k_0}2^{ks}\|\dot{\Delta}_{k}z\|_{L^2}\quad \mbox{for} \ s\in \mathbb{R}.$$

Due to the fact $p\geq2$, the last term above can be bounded easily by $\cX_p(t)[\|u^{\ell}\|_{\dot{B}^{\frac{d}{2}+1}_{2,1}}+\|u\|^{h}_{\dot{B}^{\frac{d}{p}+1}_{p,1}}]$. Next, we have
$$\|f\|_{\dot B^{\frac dp-2}_{p,1}}^h\lesssim \|au\|^h_{\dot B^{\frac dp-1}_{p,1}}.$$
We decompose $au=a^{\ell}u^{\ell}+a^{\ell}u^{h}+a^{h}u$. Hence, it is shown that
$$\|a^{\ell}u^{h}\|^h_{\dot B^{\frac dp-1}_{p,1}}\lesssim \|a^{\ell}\|_{\dot B^{\frac dp-1}_{p,1}}\|u^{h}\|_{\dot B^{\frac dp}_{p,1}}\lesssim \cX_p(t) \|u\|^{h}_{\dot{B}^{\frac{d}{p}+1}_{p,1}}$$
and
$$\|a^{h}u\|^h_{\dot B^{\frac dp-1}_{p,1}}\lesssim\|a^{h}\|_{\dot B^{\frac dp}_{p,1}}\|u\|_{\dot B^{\frac dp-1}_{p,1}}\lesssim \cX_p(t) \|a\|^{h}_{\dot B^{\frac dp}_{p,1}}.$$
It follows from Proposition \ref{prop3.5} and Bernstein inequality that
$$
\|a^{\ell}u^{\ell}\|^h_{\dot B^{\frac dp-1}_{p,1}}\lesssim\|a^{\ell}u^{\ell}\|_{\dot B^{\frac d2+1}_{2,1}}\lesssim \|a^{\ell}\|_{L^\infty}\|u^{\ell}\|_{\dot B^{\frac d2+1}_{2,1}}+\|u^{\ell}\|_{L^\infty}\|a^{\ell}\|_{\dot B^{\frac d2+1}_{2,1}} \lesssim \cX_p(t) \|(a,u)^{\ell}\|_{\dot B^{\frac {d}{2}+1}_{2,1}}.
$$
Consequently,
\begin{eqnarray}\label{R-E94}
\|f\|_{\dot B^{\frac dp-2}_{p,1}}^h\lesssim  \cX_p(t)\Big(\|a\|^{h}_{\dot B^{\frac dp}_{p,1}}+\|(a,u)^{\ell}\|_{\dot B^{\frac {d}{2}+1}_{2,1}}+\|u\|^{h}_{\dot{B}^{\frac{d}{p}+1}_{p,1}}\Big).\end{eqnarray}
In addition, it is easy to get
\begin{eqnarray}\label{R-E95}
\|g\|_{\dot B^{\frac dp-1}_{p,1}}^h\lesssim \|u\|_{\dot B^{\frac dp-1}_{p,1}} \|\nabla u\|_{\dot B^{\frac dp}_{p,1}}+\|a\|_{\dot B^{\frac dp}_{p,1}} \|\nabla u\|_{\dot B^{\frac dp}_{p,1}}+\|a\|^2_{\dot B^{\frac dp}_{p,1}},
\end{eqnarray}
where the last term can be estimated as
$$\|a^{h}\|^2_{\dot B^{\frac dp}_{p,1}}\lesssim \cX_p(t) \|a\|^{h}_{\dot B^{\frac dp}_{p,1}},$$
$$\|a^{\ell}\|^2_{\dot B^{\frac dp}_{p,1}}\lesssim \|a^{\ell}\|_{\dot B^{\frac dp-1}_{p,1}}\|a^{\ell}\|_{\dot B^{\frac dp+1}_{p,1}}\lesssim \cX_p(t) \|a^{\ell}\|_{\dot B^{\frac {d}{2}+1}_{2,1}}.$$
Therefore, we arrive at
\begin{eqnarray}\label{R-E955a}
\|g\|_{\dot B^{\frac dp-1}_{p,1}}^h\lesssim\cX_p(t)\Big(\|a\|^{h}_{\dot B^{\frac dp}_{p,1}}+\|(a,u)^{\ell}\|_{\dot B^{\frac {d}{2}+1}_{2,1}}+\|u\|^{h}_{\dot{B}^{\frac{d}{p}+1}_{p,1}}\Big).
\end{eqnarray}

Bounding $\|(f,g)\|^{\ell}_{\dot{B}^{\frac{d}{2}-1}_{2,1}}$ is a little bit more complicated. We claim that
\begin{eqnarray}\label{R-E96}
\|(f,g)\|^{\ell}_{\dot{B}^{\frac{d}{2}-1}_{2,1}}\lesssim  \cX_p(t) \Big(\|a\|^{h}_{\dot B^{\frac dp}_{p,1}}+\|(a,u)^{\ell}\|_{\dot B^{\frac {d}{2}+1}_{2,1}}+\|u\|^{h}_{\dot{B}^{\frac{d}{p}+1}_{p,1}}\Big).
\end{eqnarray}
Here, we shall follow the similar strategy as in \cite{DR2} and handle the nonlinear term $I(a)\mathcal{A}u$ as example. To this end, we need the following two inequalities (see \cite{DR2}):
\begin{multline}\label{R-E955b}
\|T_{f}g\|_{\dot{B}^{s-1+\frac d2-\frac dp}_{p,1}}\lesssim \|f\|_{\dot{B}^{\frac dp-1}_{p,1}}\|g\|_{\dot{B}^{s}_{p,1}}\quad \mbox{if}\ d\geq2 \ \mbox{and}\ \frac{d}{d-1}\leq p \leq\min (4, d^{*}),
\end{multline}
\begin{multline}\label{R-E955c}
\|R(f,g)\|_{\dot{B}^{s-1+\frac d2-\frac dp}_{p,1}}\lesssim \|f\|_{\dot{B}^{\frac dp-1}_{p,1}}\|g\|_{\dot{B}^{s}_{p,1}} \quad \mbox{if} \ s>1-\min \Big(\frac dp,\frac {d}{p'}\Big)\ \mbox{and}\  1\leq p \leq4,
\end{multline}
where $1/p+1/p'=1$ and $d^{*}\triangleq \frac{2d}{d-2}$. Now, using Bony's para-product decomposition, one has that
$$I(a)\mathcal{A}u=T_{\mathcal{A}u}I(a)+R(I(a),\mathcal{A}u)+T_{I(a)}\mathcal{A}u^{\ell}+T_{I(a)}\mathcal{A}u^{h}.$$
Thanks to \eqref{R-E955b} and \eqref{R-E955c} with $s=\frac dp$, one can get
$$
\|T_{\mathcal{A}u}I(a)\|^{\ell}_{\dot{B}^{\frac d2-1}_{2,1}} \lesssim \|\mathcal{A}u\|_{\dot{B}^{\frac dp-1}_{p,1}}\|I(a)\|_{\dot{B}^{\frac dp}_{p,1}}  \lesssim \|a\|_{\dot{B}^{\frac dp}_{p,1}} \|\nabla u\|_{\dot{B}^{\frac dp}_{p,1}},
$$
$$
\|R(I(a),\mathcal{A}u)\|^{\ell}_{\dot{B}^{\frac d2-1}_{2,1}} \lesssim  \|a\|_{\dot{B}^{\frac dp}_{p,1}} \|\nabla u\|_{\dot{B}^{\frac dp}_{p,1}}.
$$
Observe that the right-side norm of above two inequalities can be bounded by
\begin{eqnarray*}
\|a\|_{\dot{B}^{\frac dp}_{p,1}} \|\nabla u\|_{\dot{B}^{\frac dp}_{p,1}}\lesssim \cX_p(t) \Big(\|u^{\ell}\|_{\dot B^{\frac {d}{2}+1}_{2,1}}+\|u\|^{h}_{\dot{B}^{\frac{d}{p}+1}_{p,1}}\Big).
\end{eqnarray*}
Since $T$ maps $L^\infty\times \dot{B}^{\frac d2-1}_{2,1}$ to $\dot{B}^{\frac d2-1}_{2,1}$, thus,
$$
\|T_{I(a)}\mathcal{A}u^{\ell}\|^{\ell}_{\dot{B}^{\frac d2-1}_{2,1}} \lesssim \|I(a)\|_{L^\infty} \|\mathcal{A}u^{\ell}\|_{\dot{B}^{\frac d2-1}_{2,1}}
\lesssim  \|a\|_{\dot{B}^{\frac dp}_{p,1}} \|u^{\ell}\|_{\dot{B}^{\frac {d}{2}+1}_{2,1}}.
$$
In order to handle the last term in the decomposition of $I(a)\mathcal{A}u$, we observe that owing to the spectral cut-off, there exists a universal integer $N_0$ such that
$$\Big(T_{I(a)}\mathcal{A}u^{h}\Big)^{\ell}=\dot{S}_{k_{0}+1}\Big(\sum_{|j-k_0|\leq N_{0}}\dot{S}_{j-1}I(a)\dot{\Delta}_{j}\mathcal{A}u^{h}\Big).$$
Hence $\|T_{I(a)}\mathcal{A}u^{h}\|^{\ell}_{\dot{B}^{\frac d2-1}_{2,1}}\approx 2^{k_0(\frac d2-1)}\sum_{|j-k_0|\leq N_{0}}\|\dot{S}_{j-1}I(a)\dot{\Delta}_{j}\mathcal{A}u^{h}\|_{L^2}$. If $2\leq p \leq \min (d,d^*)$ then one may use for $|j-k_0|\leq N_{0}$
\begin{multline*}
2^{k_0(\frac d2-1)}\|\dot{S}_{j-1}I(a)\dot{\Delta}_{j}\mathcal{A}u^{h}\|_{L^2}\lesssim \|\dot{S}_{j-1}I(a)\|_{L^d}\Big(2^{j(\frac{d}{d^*}-1)}\|\dot{\Delta}_{j}\mathcal{A}u^{h}\|_{L^{d^{*}}}\Big) \cr \lesssim \|a\|_{\dot{B}^{\frac dp-1}_{p,1}}\| u\|^{h}_{\dot{B}^{\frac dp+1}_{p,1}}\lesssim \cX_p(t)\| u\|^{h}_{\dot{B}^{\frac dp+1}_{p,1}},
\end{multline*}
and if $d\leq p\leq 4$, then it holds that
\begin{eqnarray*}
2^{k_0(\frac d2-1)}\|\dot{S}_{j-1}I(a)\dot{\Delta}_{j}\mathcal{A}u^{h}\|_{L^2}&\lesssim& \Big(2^{j\frac d4}\|\dot{S}_{j-1}I(a)\|_{L^4}\Big)\Big(2^{j(\frac d4-1)}\|\dot{\Delta}_{j}\mathcal{A}u^{h}\|_{L^4}\Big)\\ &\lesssim& 2^{k_0}\Big(2^{j(\frac dp-1)}\|\dot{S}_{j-1}I(a)\|_{L^p}\Big)\Big(2^{j(\frac dp-1)}\|\dot{\Delta}_{j}\mathcal{A}u^{h}\|_{L^p}\Big) \\ &\lesssim&\|a\|_{\dot{B}^{\frac dp-1}_{p,1}}\| u\|^{h}_{\dot{B}^{\frac dp+1}_{p,1}}\lesssim \cX_p(t)\| u\|^{h}_{\dot{B}^{\frac dp+1}_{p,1}}.
\end{eqnarray*}
Bounding other nonlinear terms is similar, so the details are left to the interested reader. Hence, the inequality \eqref{R-E96} is proved.

Inserting above estimates into (\ref{R-E93}) and using the fact that  $\cX_p(t)\lesssim  \cX_{p,0}\ll 1$ for all $t\geq0$ guaranteed  by Theorem \ref{thm1.1},
we conclude that
\begin{multline}\label{R-E97}
\frac{d}{dt}\Big(\|(a,u)^{\ell}\|_{\dot{B}^{\frac{d}{2}-1}_{2,1}}+\|(\nabla a, u)\|_{\dot B^{\frac dp-1}_{p,1}}^h\Big)+(\|(a,u)^{\ell}\|_{\dot{B}^{\frac{d}{2}+1}_{2,1}}+\|a\|_{\dot B^{\frac dp}_{p,1}}^h+\|u\|_{\dot B^{\frac dp+1}_{p,1}}^h)\leq0.
\end{multline}

In what follow, we shall see that interpolation  plays the key role to get the time-decay estimates. Thanks to $-\sigma_{1}<\frac d2-1\leq \frac dp<\frac d2+1$, it follows from the real interpolation in Proposition \ref{prop3.3} that
\begin{eqnarray}\label{R-E98}
\|(a,u)^{\ell}\|_{\dot{B}^{\frac{d}{2}-1}_{2,1}}\lesssim \Big(\|(a,u)\|^{\ell}_{\dot{B}^{-\sigma_{1}}_{2,\infty}}\Big)^{\theta_{0}}\Big(\|(a,u)^{\ell}\|_{\dot{B}^{\frac{d}{2}+1}_{2,\infty}}\Big)^{1-\theta_{0}},
\end{eqnarray}
where $\theta_{0}=\frac{2}{d/2+1+\sigma_1}\in (0,1)$.

By virtue of \eqref{R-E92}, one can get
$$
\|(a,u)^{\ell}\|_{\dot{B}^{\frac{d}{2}+1}_{2,1}}\geq c_0 \Big(\|(a,u)^{\ell}\|_{\dot{B}^{\frac{d}{2}-1}_{2,1}}\Big)^{\frac{1}{1-\theta_{0}}},$$
where $c_0=C^{-\frac{1}{1-\theta_{0}}}C_{0}^{-\frac{\theta_{0}}{1-\theta_{0}}}$. In addition, it follows the fact
$\|(\nabla a,u)|_{\dot B^{\frac dp-1}_{p,1}}^h\leq\cX_p(t)\lesssim  \cX_{p,0}\ll 1$ for all $t\geq0$ that
$$\|a\|_{\dot B^{\frac dp}_{p,1}}^h\geq \Big(\|a\|_{\dot B^{\frac dp}_{p,1}}^h\Big)^{\frac{1}{1-\theta_{0}}},\quad
\|u\|_{\dot B^{\frac dp+1}_{p,1}}^h\geq \Big(\|u\|_{\dot B^{\frac dp-1}_{p,1}}^h\Big)^{\frac{1}{1-\theta_{0}}}.$$
Consequently, there exists a constant $\tilde{c}_{0}>0$ such that the following Lyapunov-type inequality holds
\begin{multline}\label{R-E99}
\frac{d}{dt}\Big(\|(a,u)^{\ell}\|_{\dot{B}^{\frac{d}{2}-1}_{2,1}}+\|(\nabla a, u)\|_{\dot B^{\frac dp-1}_{p,1}}^h\Big)
+\tilde{c}_{0}\Big(\|(a,u)^{\ell}\|_{\dot{B}^{\frac{d}{2}-1}_{2,1}}+\|(\nabla a, u)\|_{\dot B^{\frac dp-1}_{p,1}}^h\Big)^{1+\frac{2}{d/2-1+\sigma_1}}\leq 0.
\end{multline}
Solving \eqref{R-E99} directly gives
\begin{multline}\label{R-E100}
\|(a,u)^{\ell}(t)\|_{\dot{B}^{\frac{d}{2}-1}_{2,1}}+\|(\nabla a, u)(t)\|_{\dot B^{\frac dp-1}_{p,1}}^h\leq \Big(\cX_{p,0}^{-\frac{2}{d/2-1+\sigma_1}}+\frac{2\tilde{c}_{0}t}{d/2-1+\sigma_1}\Big)^{-\frac{d/2-1+\sigma_1}{2}} \hfill \cr\hfill \lesssim (1+t)^{-\frac{d/2-1+\sigma_1}{2}}
\end{multline}
for all $t\geq 0$. According to the embedding properties in Proposition \ref{prop3.4}, we arrive at
\begin{multline}\label{R-E101}
\|(a,u)(t)\|_{\dot{B}^{\frac{d}{p}-1}_{p,1}}\lesssim \|(a,u)^{\ell}(t)\|_{\dot{B}^{\frac{d}{2}-1}_{2,1}}+\|(\nabla a, u)(t)\|_{\dot B^{\frac dp-1}_{p,1}}^h \lesssim (1+t)^{-\frac{d/2-1+\sigma_1}{2}}.
\end{multline}
In addition, if $\sigma \in (-\tilde{\sigma}_{1}, \frac dp-1)$ with $\tilde{\sigma}_{1}=\sigma_{1}+d(\frac 12-\frac1p)$, then employing Proposition \ref{prop3.3} once again implies that
\begin{multline}\label{R-E102}
\|(a,u)^{\ell}\|_{\dot{B}^{\sigma}_{p,1}}\lesssim \|(a,u)^{\ell}\|_{\dot{B}^{\sigma+d(\frac 12-\frac 1p)}_{2,1}}\lesssim \Big(\|(a,u)\|^{\ell}_{\dot{B}^{-\sigma_1}_{2,\infty}}\Big)^{\theta_1}\Big(\|(a,u)^{\ell}\|_{\dot{B}^{\frac d2-1}_{2,\infty}}\Big)^{1-\theta_1},
\end{multline}
where $$\theta_1=\frac{\frac dp -1-\sigma}{\frac d2-1+\sigma_1}\in (0,1).$$ Noticing the fact that
$$\|(a,u)(t)\|^{\ell}_{\dot{B}^{-\sigma_{1}}_{2,\infty}} \leq C_{0}$$
for all $t\geq 0$, with aid of \eqref{R-E101}-\eqref{R-E102}, we deduce that
\begin{eqnarray}\label{R-E103}
\|(a,u)(t)^{\ell}\|_{\dot{B}^{\sigma}_{p,1}}\lesssim \Big[(1+t)^{-\frac{d/2-1+\sigma_1}{2}}\Big]^{1-\theta_1}=(1+t)^{-\frac{d}{2}(\frac 12-\frac 1p)-\frac{\sigma+\sigma_1}{2}}
\end{eqnarray}
for all $t\geq 0$, which leads to
\begin{multline}\label{R-E104}
\|(a,u)(t)\|_{\dot{B}^{\sigma}_{p,1}}\lesssim \|(a,u)(t)^{\ell}\|_{\dot{B}^{\sigma}_{p,1}}+\|(a,u)(t)\|^{h}_{\dot{B}^{\sigma}_{p,1}}\lesssim(1+t)^{-\frac{d}{2}(\frac 12-\frac 1p)-\frac{\sigma+\sigma_1}{2}}
\end{multline}
for $\sigma \in (-\tilde{\sigma}_{1}, \frac dp-1)$. Therefore, the proof of Theorem \ref{thm1.2} is completed by $\dot{B}^{0}_{p,1}\hookrightarrow L^p.$
$\Box$

\subsection{Proof of Corollary \ref{cor1.1}}
Indeed, Corollary \ref{cor1.1} can be regarded as the direct consequence
of Proposition \ref{prop3.8}. It follows from Proposition \ref{prop3.8} with  $q=p,$ $m=\frac dp-1$
and $k=-\tilde{\sigma}_{1}+\ep$ with  $\ep>0$ small enough. Furthermore,
if we define  $\theta_{2}$ by the relation
$$
k\theta_{2}+m(1-\theta_{2})=l+d\Bigl(\frac1p-\frac1r\Bigr),
$$
then one can take  $\ep$ so small as $\theta_{2}>0$ to be  in $(0,1).$ Therefore we have
\begin{eqnarray}\label{R-E105}
\|\Lambda^{l}(a,u)\|_{L^{r}} &\lesssim &\|\Lambda^{m}(a,u)\|_{L^p}^{1-\theta_{2}}\|\Lambda^{k}(a,u)\|^{\theta_{2}}_{L^p}
\nonumber\\& \lesssim & \Big\{(1+t)^{-\frac d2(\frac 12-\frac 1p)-\frac{m+\sigma_{1}}{2}}\Big\}^{1-\theta_{2}}
\Big\{(1+t)^{-\frac d2(\frac 12-\frac 1p)-\frac{k+\sigma_{1}}{2}}\Big\}^{\theta_{2}}
\nonumber\\&=& (1+t)^{-\frac d2(\frac 12-\frac 1r)-\frac {l+\sigma_{1}}{2}}
\end{eqnarray}
for $p\leq r\leq\infty$ and $l\in\R$ satisfying
$-\tilde{\sigma}_1<l+d\big(\frac1p-\frac1r\big) \leq\frac dp-1\cdotp$ \quad \quad $\Box$

\bigskip

{\bf Acknowledgments}
The first author (Z.~Xin) is supported in part by Hong Kong Research Council Earmarked Grants CUHK 14305315, CUHK 14300917 and CUHK 14302917,
NSFC-RGC Joint Grant N-CUHK 443-14 and Zheng Ge Ru Foundation.
The second author (J. Xu) is partially supported by the National
Natural Science Foundation of China (11471158, 11871274) and the Fundamental Research Funds for the Central
Universities (NE2015005).

\end{document}